\newcommand{\N}{\mathbb{N}}
\newcommand{\Z}{\mathbb{Z}}
\newcommand{\R}{\mathbb{R}}
\newcommand{\C}{\mathbb{C}}
\newcommand{\h}{H}
\newcommand{\Hawaii}{Hawai\kern.05em`\kern.05em\relax i}
\newcommand{\red}{\textrm{red}}
\theoremstyle{plain}
\newtheorem{theorem}{Theorem}[section]
\newtheorem{lemma}[theorem]{Lemma}
\newtheorem{corollary}[theorem]{Corollary}
\newtheorem{proposition}[theorem]{Proposition}
\newtheorem{definition-theorem}[theorem]{Definition / Theorem}
\newtheorem*{conjecture*}{Conjecture}
\newtheorem*{theorem*}{Theorem}
\theoremstyle{definition}
\newtheorem{definition}[theorem]{Definition}
\newtheorem{example}[theorem]{Example}
\newtheorem{convention}[theorem]{Convention}
\theoremstyle{remark}
\newtheorem{remark}[theorem]{Remark}
\newtheorem{examples}[theorem]{Examples}
\newtheorem*{example*}{Example}  
\newtheorem*{remark*}{Remark}
\title{Topological property (T) for groupoids}
\author{Cl\'{e}ment Dell'Aiera and Rufus Willett}
\begin{document}

\maketitle

\begin{abstract}
We introduce a notion of topological property (T) for \'{e}tale groupoids.  This simultaneously generalizes Kazhdan's property (T) for groups and geometric property (T) for coarse spaces.  One main goal  is to use this property (T) to prove the existence of so-called Kazhdan projections in both maximal and reduced groupoid $C^*$-algebras, and explore applications of this to exactness, $K$-exactness, and the Baum-Connes conjecture.  We also study various examples, and discuss the relationship with other notions of property (T) for groupoids and with a-T-menability.  
\end{abstract}

\tableofcontents

\section{Introduction}

Property (T) is an important rigidity property of groups introduced by Kazhdan \cite{Kazhdan:1967aa}, and much studied for its applications and connections to several parts of mathematics: see for example the monograph \cite{Bekka:2000kx} for an overview and historical comments.  Property (T) has also been extended to measured groupoids by Zimmer \cite{Zimmer:2981ds} (for equivalence relations) and Anantharaman-Delaroche \cite{Anantharaman-Delaroche:2005pb} (in a fairly general setting).  Measured property (T) has very interesting connections to von Neumann algebra theory via the construction of groupoid von Neumann algebras, and in particular to the special case of group actions via the group measure-space construction: see for example \cite{Lupini:2017aa} and the references given there.

For applications to groupoid $C^*$-algebras, one needs a topological version of property (T) for groupoids, and this currently seems to be missing from the literature.  It is the goal of this paper to give one possible definition that fills this gap, particularly motivated by work of Higson, Lafforgue, and Skandalis \cite{Higson:2002la}.  Indeed, these authors were able to show that certain projections in groupoid $C^*$-algebras have bad properties from the point of view of exactness, and thus to produce counterexamples to versions of the Baum-Connes conjecture.  The projections constructed by Higson, Lafforgue, and Skandalis have a lot in common with the so-called Kazhdan projections in group $C^*$-algebras first constructed by Akemann-Walter \cite{Akemann:1981dz} using property (T).  This analogy is particularly good when one uses the approach to these projections exploiting spectral gap phenomena due to Valette \cite[Theorem 3.2]{Valette:1984wy} and as extensively studied recently by Drutu and Nowak \cite{Drutu:2015aa}.  

From the above discussion, it seems natural to try to define a topological version of property (T) that works for groupoids, and allows one to construct such Kazhdan projections in associated groupoid $C^*$-algebras.  Indeed, this was implicitly done by the second author and Yu \cite{Willett:2013cr} in a special case.  These authors introduced a notion called geometric property (T) for coarse spaces; moreover, geometric property (T) can be interpreted as a property of the associated coarse groupoid introduced by Skandalis, Tu and Yu in \cite{Skandalis:2002ng}.  Another motivation of ours was to generalize geometric property (T) from coarse groupoids to a more general class of groupoids.

There is something a little mysterious about the Kazhdan projections considered (at least implicitly) by Higson, Lafforgue, and Skandalis when compared to the group case.  In the group case, Kazhdan projections live in the maximal group $C^*$-algebra $C^*_{\max}(G)$, but (other than in the very special situation where the underlying group is compact) must map to zero in the reduced group $C^*$-algebra $C^*_r(G)$.  However, in the groupoid case, there can be Kazhdan projections that are non-zero in both $C^*_{\max}(G)$ and $C^*_r(G)$, or even that are non-zero in $C^*_r(G)$ without existing in $C^*_{\max}(G)$.  These sort of phenomena are crucial for the work of Higson, Lafforgue, and Skandalis: the Baum-Connes conjecture is about the $K$-theory of $C^*_r(G)$, so one needs projections in the reduced $C^*$-algebra.  An important motivation for us was to clarify all this; although it would be a little unwieldy to give details in this introduction, let us say that the existence of non-trivial Kazhdan projections in $C^*_r(G)$ has to do with interactions between the parts of the base space that emit finitely many arrows, and those parts that emit infinitely many.

\subsection*{Outline}

Although studying Kazhdan projections is our main motivation, we expect that topological property (T) for groupoids will have other interesting applications just as in the group case, and take the opportunity to develop some basic theory.  Thus having gone over some conventions in Section \ref{con sec}, we start by giving an account of what we mean by property (T) for groupoids in Section \ref{basic sec}: much as in the group case, the basic idea is that invariant vectors in representations must be isolated from the rest in some appropriate sense.  In the groupoid case, however, there are at least two reasonable definitions of invariant vector, so there are some foundational issues about this to consider before one can even get started; this is all done in Section \ref{basic sec}.  We then discuss some natural classes of examples in Section \ref{ex sec}, including connections to coarse geometry, group actions, and property $(\tau)$.  In Section \ref{meas sec} we discuss the relationship of our notion to other definitions of property (T) for groupoids, including the work of Zimmer and Anantharaman-Delaroche in the measured setting that was mentioned above.  In Section \ref{atmen sec}, we discuss the relationship with a-T-menability for groupoids as defined by Tu \cite[Section 3]{Tu:1999bq}; as one might expect by analogy with the group case, property (T) is incompatible with a-T-menability at least in some cases.  In Section \ref{kaz sec} we finally get back to our main motivation and give a fairly thorough discussion of the existence of Kazhdan projections in groupoid $C^*$-algebras and applications to exactness, $K$-exactness, and the Baum-Connes conjecture.  Finally, in Section \ref{q sec}, we summarize some open questions.

This paper is fairly long, and we expect different parts might interest different audiences.  We have thus aimed to write the paper in a fairly modular way: after Section \ref{basic sec}, it should be possible to read any of Sections \ref{ex sec}, \ref{meas sec}, \ref{atmen sec} and \ref{kaz sec} more-or-less independently of the others.

\subsection*{Acknowledgments}

As mentioned above, this work is partly an attempt to generalize joint work of the second author with Guoliang Yu in the coarse geometric setting.  The authors are grateful to Professor Yu for several interesting conversations around this subject.  We are also grateful to Jesse Peterson for pointing out some references, and other interesting comments.  Finally, we would like to thank Kenny Corea and the anonymous referee for pointing out some mistakes, and suggesting improvements.

The authors were partly supported by the US NSF (DMS 1564281 for the first author, and DMS 1401126 and DMS 1564281 for the second author).

\section{Conventions}\label{con sec}

As there is some inconsistency about notational and terminological conventions in the groupoid $C^*$-algebra literature\footnote{And indeed, even between our own papers!}, we list ours here.  For background on the class of groupoids we consider and the associated $C^*$-algebras, we recommend \cite[Section 2.3]{Renault:2009zr}, \cite[Section 5.6]{Brown:2008qy}, and \cite{Sims:2017aa}; see these references for precise definitions of the various objects we introduce below.

Groupoids will be denoted $G$, with \emph{base space} or \emph{unit space} $G^{(0)}$, which we identify with a subset of $G$.   Typically, we write elements of $G$ using letters like $g,h,k$, and elements of $G^{(0)}$ using letters like $x,y,z$.  An ordered pair $(g,h)\in G\times G$ is \emph{composable} if $s(g)=r(h)$, in which case we write $gh$ for their product.  For $x\in G^{(0)}$, the \emph{range fibre} and \emph{source fibre} are of $x$ are defined by 
$$
G^x:=r^{-1}(x)\quad\text{and}\quad G_x:=s^{-1}(x)
$$
respectively.  If $A$, $B$ are two subsets of $G$, we define 
$$
G_A^B:=\{g\in G\mid s(g)\in A\text{ and } r(g)\in B\}.
$$
We define also 
$$
A^{-1}:=\{g^{-1}\mid g\in G\} \quad \text{and}\quad AB:=\{gh\mid g\in A,h\in B \text{ and } s(g)=r(h)\}
$$
(note that $AB$ could be empty even if $A$ and $B$ are not).

A groupoid will always be assumed to be equipped with a locally compact, Hausdorff topology. 
We will always assume that the inverse and composition maps are continuous.  A \emph{bisection} is an open subset $B$ of $G$ on which $r$ and $s$ restrict to homeomorphisms.  We will always assume that $G$ is \emph{\'{e}tale}, meaning that there is a basis for its topology consisting of open bisections; note that this implies that $r$ and $s$ are continuous and open maps, that $G^{(0)}$ is closed and open in $G$, and that each $G_x$ and $G^x$ are discrete in the subspace topology.

We will sometimes need to use measures on $G$ and $G^{(0)}$.  A \emph{measure} on a locally compact Hausdorff space $X$ will  always mean a \emph{Radon measure}, i.e.\ a positive element $\mu:C_c(X)\to \C$ of the continuous dual of the topological vector space $C_c(X)$ of continuous compactly supported complex-valued functions on $X$; we will also think of measures as appropriate maps $\mu:\mathcal{B}(X)\to [0,\infty]$ from the collection of Borel subsets of $X$ to $[0,\infty]$ when convenient.  A measure is a \emph{probability measure} if $\mu(X)=1$.  

Given a measure $\mu$ on $G^{(0)}$, define measures $r^*\mu$ and $s^*\mu$ on $G$ as functionals on $C_c(G)$ via the formulas 
$$
(r^*\mu)(f):=\int_{G^{(0)}}\sum_{g\in G^x}f(g)d\mu(x)\quad \text{and}\quad (s^*\mu)(f):=\int_{G^{(0)}}\sum_{g\in G_x}f(g)d\mu(x).
$$
A measure $\mu$ on $G^{(0)}$ is \emph{quasi-invariant} if $r^*\mu$ and $s^*\mu$ have the same null sets, in which case the associated \emph{modular function} $D:G\to (0,\infty)$ is defined to be the Radon-Nikodym derivative $D:=d(r^*\mu) / d(s^*\mu)$.  A measure on $G^{(0)}$ is \emph{invariant} if $r^*\mu=s^*\mu$, or equivalently, if $\mu(r(B))=\mu(s(B))$ for any Borel bisection $B$.

The \emph{convolution $*$-algebra} of $G$ identifies as a vector space with the space $C_c(G)$ of continuous, compactly supported, complex-valued functions on $G$.  The multiplication and adjoint operations on $C_c(G)$ are defined by 
$$
(f_1f_2)(g):=\sum_{hk=g}f_1(h)f_2(k)\quad \text{and} \quad f^*(g):=\overline{f(g^{-1})}
$$
respectively.  The maximal and reduced $C^*$-algebraic completions of $C_c(G)$ will be denoted by $C^*_{\max}(G)$ and $C^*_{r}(G)$ respectively.  In addition to the reduced and maximal $C^*$-norms on $C_c(G)$, we will need the \emph{$I$-norm} defined for $f\in C_c(G)$ by
$$
\|f\|_I:=\max\Bigg\{\sup_{x\in G^{(0)}}\sum_{g\in G^x}|f(g)|,~\sup_{x\in G^{(0)}}\sum_{g\in G_x}|f(g)|\Bigg\}.
$$
 
A \emph{representation} of $C_c(G)$ is by definition a $*$-homomorphism 
$$
\pi:C_c(G)\to \mathcal{B}(\h)
$$
from $C_c(G)$ to the $C^*$-algebra of bounded operators on some Hilbert space $\h$; our Hilbert spaces are always complex, and inner products are linear in the second variable.  Typically we write $(\h,\pi)$ for a representation.  Often, we will leave the map $\pi$ implicit in the notation unless this seems likely to cause confusion, writing for example `$f\xi$' rather than `$\pi(f)\xi$' for $f\in C_c(G)$ and $\xi\in \h$.  Note that any representation of $C_c(G)$ extends uniquely\footnote{In the literature this is often stated as a consequence of Renault's disintegration theorem, and thus something that is only known to hold in the second countable case; however, for \'{e}tale groupoids it is always true, and not difficult to prove directly.  See for example \cite[Theorem 3.2.2]{Sims:2017aa}.} to a representation of $C^*_{\max}(G)$, i.e.\ to a $*$-homomorphism 
$$
\pi:C^*_{\max}(G)\to \mathcal{B}(H),
$$
and any such representation restricts to a unique representation of $C_c(G)$; as such, we will sometimes identify representations of $C_c(G)$ with representations of $C^*_{\max}(G)$.

As this is certainly not universal, we finish this section by emphasizing the following convention.

\begin{convention}\label{gpd conv}
Throughout this paper, all groupoids are assumed to be locally compact, Hausdorff, \'{e}tale, and to have compact unit space (other than in a few side remarks).  We will generally not repeat these assumptions; thus in this paper \textbf{``groupoid'' means locally compact, Hausdorff, \'{e}tale groupoid with compact unit space}.
\end{convention}

Much of what we do could be carried out in more generality; we make a few comments below about possible generalisations where we feel this might be useful.  However, we thought it would be better to keep to a relatively simple setting so as not to lose the main ideas in excessive technicalities, and also as our assumptions cover the examples that we are most interested in.

\section{Constant vectors and property (T)}\label{basic sec}

See Convention \ref{gpd conv} for our use of the word ``groupoid''. 

In this section, we introduce our notion of property (T) for groupoids (as usual, locally compact, Hausdorff, \'{e}tale, and with compact unit space).  Just like property (T) for groups, the idea is that the `constant vectors' in any representation of $C_c(G)$ should be isolated in some sense.  

However, unlike for groups it is not completely clear what a constant vector in a representation of $C_c(G)$ should mean: there seem to be at least two genuinely different reasonable definitions.  The definition below is well-suited to our applications. 

\begin{definition}\label{con vec def}
Let $G$ be a groupoid.  Define a linear map by
$$
\Psi:C_c(G)\to C(G^{(0)}),\quad f\mapsto \sum_{g\in G^x}f(g).
$$
Note that the image is indeed contained in $C(G^{(0)})$: indeed, it suffices by the \'{e}tale assumption to check this for $f$ supported in an open bisection, in which case it is clear.  For a representation $(\h,\pi)$ of $C_c(G)$, a vector $\xi\in \h$ is \emph{invariant}, or \emph{fixed}, or \emph{constant} if for all $f\in C_c(G)$,
$$
f\xi=\Psi(f)\xi.
$$
We write $\h^\pi$ for the closed subspace of $\h$ consisting of constant vectors, and $\h_\pi$ for its orthogonal complement.  
\end{definition}

In order to fix intuition, let us look at some examples.

\begin{example}\label{gp ex}
Let $G=\Gamma$ be a discrete group, so $C_c(G)=\C[\Gamma]$ is the usual complex group $*$-algebra, with elements given by formal sums $\sum_{g\in\Gamma}a_gg$ with finitely many non-zero complex coefficients $a_g\in \C$.  Representations of $C_c(G)$ are canonically in one-to-one correspondence with unitary representations of $\Gamma$.  Moreover, $C_c(G^{(0)})=\C$, and $\Psi(\sum a_gu_g)=\sum a_g$.  From this, one sees that in any representation $(H,\pi)$ of $C_c(G)$, a vector $\xi$ is fixed if and only if it is fixed by the corresponding unitary representation $u$ of $\Gamma$, i.e.\ if and only if $u_g\xi=\xi$ for all $g\in \Gamma$.
\end{example}

\begin{example}\label{triv rep}
Let $G$ be a groupoid, and let $\mu$ be an invariant probability measure on $G^{(0)}$.  Let $H_\mu$ be the Hilbert space $L^2(G^{(0)},\mu)$, and define a representation $\tau_\mu$ of $C_c(G)$ on $H$ by the formula
$$
(\tau_\mu(f)\xi)(x):=\sum_{g\in G^x} f(g)\xi(s(g)).
$$
The pair $(H_\mu,\tau_\mu)$ is called the \emph{trivial representation} associated to $\mu$.  Then any function $\xi:G^{(0)}\to \C$ that is constant in the usual sense is invariant for $\tau_\mu$.   More generally, $\xi\in H_\mu$ is invariant if and only if for $\mu$-almost-every $x\in G^{(0)}$ and every $g\in G^x$, $\xi(x)=\xi(s(g))$ (roughly, `$\xi$ is constant on almost every orbit').
\end{example}

Note that the above example shows that $H^\pi$ and $H_\pi$ will \emph{not} be invariant under $\pi$ in general, and therefore (unlike the group case), the constant vectors do not define a subrepresentation of $(H,\pi)$ in general.  

The above example of constant vectors is in some sense general.  The next proposition formalises this; we include it mainly for intuition, and will not really use it in the rest of the paper.

\begin{proposition}\label{inv vec char}
Let $G$ be a groupoid, $(H,\pi)$ be a representation of $C_c(G)$, and $\xi\in H^\pi$ be a constant vector.  Then the measure $\mu_\xi$ on $G^{(0)}$ defined by 
$$
\mu_\xi:C(G^{(0)})\to \C, \quad f\mapsto \langle \xi, f\xi\rangle
$$
is invariant.  

Moreover, the cyclic subrepresentation of $(H,\pi)$ generated by $\xi$ is unitarily equivalent to the trivial representation $(H_{\mu_\xi},\tau_{\mu_\xi})$ of Example \ref{triv rep} via a unitary isomorphism that takes $\xi$ to the constant function with value one.
\end{proposition}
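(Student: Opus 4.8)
The plan is to treat the two assertions in turn. Throughout, write $\mathbf 1$ for the constant function with value one on $G^{(0)}$; since $G^{(0)}$ is compact and open in $G$ we regard $\mathbf 1$ as an element of $C_c(G)$, and then $\mathbf 1$ is the unit of the $*$-algebra $C_c(G)$, while on the subalgebra $C(G^{(0)})\subseteq C_c(G)$ the product is pointwise multiplication and $h^*=\overline h$.

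\emph{Step 1: $\mu_\xi$ is invariant.} First I would check that $\mu_\xi$ is a genuine positive (hence finite Radon) functional on $C(G^{(0)})$: for $h\geq 0$ put $k=\sqrt h\in C(G^{(0)})$, so that $h=k^*k$ and $\mu_\xi(h)=\langle\xi,\pi(k)^*\pi(k)\xi\rangle=\|\pi(k)\xi\|^2\geq 0$. To prove $r^*\mu_\xi=s^*\mu_\xi$ as functionals on $C_c(G)$, fix $F\in C_c(G)$. By the \'{e}tale hypothesis $\Psi(F)$ and the map $x\mapsto\sum_{g\in G_x}F(g)$ both lie in $C(G^{(0)})$, the latter being $\overline{\Psi(F^*)}$ after the substitution $g\mapsto g^{-1}$. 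A short chain of identities --- the definitions of $r^*\mu_\xi$, $s^*\mu_\xi$ and $\mu_\xi$, the fact that $\pi$ is a $*$-homomorphism, the constancy of $\xi$ (Definition~\ref{con vec def}) applied to $F$ and to $F^*$, and the conjugation symmetry $\mu_\xi(\overline h)=\overline{\mu_\xi(h)}$ --- then gives
$$
(r^*\mu_\xi)(F)=\mu_\xi(\Psi(F))=\langle\xi,\pi(F)\xi\rangle=\overline{\mu_\xi(\Psi(F^*))}=(s^*\mu_\xi)(F).
$$
Hence $\mu_\xi$ is invariant, so $(H_{\mu_\xi},\tau_{\mu_\xi})$ is defined (the construction of Example~\ref{triv rep} applies verbatim to any finite invariant measure).

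\emph{Step 2: reduction to the non-degenerate case.} Since $\mathbf 1$ is the unit of $C_c(G)$, $p:=\pi(\mathbf 1)$ is a projection with $\pi(f)=p\pi(f)=\pi(f)p$ for all $f$, and $\overline{\pi(C_c(G))H}=pH$. Passing to $pH$ and replacing $\xi$ by $p\xi$ changes neither $\mu_\xi$ nor the cyclic subspace $\overline{\pi(C_c(G))\xi}$, and $p\xi$ is again a constant vector; so I would assume from now on that $\pi$ is non-degenerate, hence unital, so that in particular $\pi(\mathbf 1)\xi=\xi$. From the formula in Example~\ref{triv rep}, $\tau_{\mu_\xi}(f)\mathbf 1=\Psi(f)$; and $\Psi$ restricts to the identity on $C(G^{(0)})$, so it maps onto $C(G^{(0)})$, which is dense in $L^2(G^{(0)},\mu_\xi)$. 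Thus $\mathbf 1$ is cyclic for $\tau_{\mu_\xi}$.

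\emph{Step 3: the intertwining unitary.} I would define $W$ on the dense subspace $\pi(C_c(G))\xi$ of $\overline{\pi(C_c(G))\xi}$ by $W(\pi(f)\xi):=\Psi(f)$. Using the constancy of $\xi$ once more,
$$
\|\pi(f)\xi\|^2=\|\pi(\Psi(f))\xi\|^2=\langle\xi,\pi(|\Psi(f)|^2)\xi\rangle=\mu_\xi\big(|\Psi(f)|^2\big)=\|\Psi(f)\|_{L^2(\mu_\xi)}^2,
$$
so $W$ is well defined and isometric with dense range, hence extends to a unitary $\overline{\pi(C_c(G))\xi}\to L^2(G^{(0)},\mu_\xi)$. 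The identity $\Psi(gf)=\tau_{\mu_\xi}(g)\Psi(f)$ (a direct computation reorganising a double sum over $G^x$) gives $W\pi(g)=\tau_{\mu_\xi}(g)W$ on the dense subspace, hence everywhere; thus $W$ is a unitary equivalence of the cyclic subrepresentation generated by $\xi$ with $(H_{\mu_\xi},\tau_{\mu_\xi})$. Finally $W\xi=W(\pi(\mathbf 1)\xi)=\Psi(\mathbf 1)=\mathbf 1$, the constant function with value one, as required. The computations (the behaviour of $*$ and of $\Psi$ on the relevant functions, the identity $\Psi(gf)=\tau_{\mu_\xi}(g)\Psi(f)$) are all routine unwindings of the definitions; the two places needing a little care are the conjugation bookkeeping identifying $s^*\mu_\xi$ with the matrix coefficient $F\mapsto\langle\xi,\pi(F)\xi\rangle$ in Step 1, and the non-degeneracy reduction in Step 2, which is precisely what puts $\xi$ into the cyclic subspace it generates so that the last sentence makes sense. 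I do not anticipate any genuinely hard point.
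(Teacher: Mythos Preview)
Your proof is correct. For the invariance of $\mu_\xi$ (Step~1), your argument is essentially identical to the paper's: both reduce the equality $r^*\mu_\xi = s^*\mu_\xi$ to the identity $\langle\xi,\Psi(F)\xi\rangle = \langle\Psi(F^*)\xi,\xi\rangle$, which follows from constancy of $\xi$ together with the $*$-property of $\pi$.

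For the unitary equivalence, you take a more explicit route than the paper. The paper simply verifies the matrix-coefficient equality
\[
\langle\xi,\pi(f)\xi\rangle_H \;=\; \langle\mathbf 1,\tau_{\mu_\xi}(f)\mathbf 1\rangle_{H_{\mu_\xi}}
\]
and then invokes the standard uniqueness theorem for $*$-representations with prescribed cyclic vector (Dixmier, \cite[Proposition 2.4.1]{Dixmier:1977vl}). Your Step~3 unpacks precisely that uniqueness result: you build the intertwiner by hand via $W(\pi(f)\xi)=\Psi(f)$, checking isometry from $\|\pi(f)\xi\|^2=\mu_\xi(|\Psi(f)|^2)$ and intertwining from the convolution identity $\Psi(gf)=\tau_{\mu_\xi}(g)\Psi(f)$. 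This buys you a self-contained argument with no external citation, and makes the unitary and its properties completely transparent; the paper's version is shorter but relies on the reader recognising the cited result. Your Step~2 non-degeneracy reduction also makes explicit a point the paper leaves tacit, namely that one needs $\xi\in\overline{\pi(C_c(G))\xi}$ for the word ``cyclic'' (and hence the appeal to Dixmier) to apply.
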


\begin{proof}
Recall that a measure $\mu$ on $G^{(0)}$ is invariant if and only if $r^*\mu=s^*\mu$, i.e.\ if and only if 
$$
\int_{G^{(0)}}\sum_{g\in G^x}f(g)d\mu(x)=\int_{G^{(0)}}\sum_{g\in G_x}f(g)d\mu(x)
$$
for all $f\in C_c(G)$.  In the case $\mu=\mu_\xi$, note that the left hand side equals $\langle \xi,\Psi(f)\xi\rangle$ by definition of $\mu_\xi$, and the right hand side equals $\langle \Psi(f^*)\xi,\xi\rangle$.  Hence to show invariance of $\mu_\xi$, we must show that 
$$
\langle \xi,\Psi(f)\xi\rangle=\langle \Psi(f^*)\xi,\xi\rangle
$$
for all $f\in C_c(G)$.  However, invariance of $\xi$ gives
$$
\langle \xi,\Psi(f)\xi\rangle=\langle \xi,f\xi\rangle=\langle f^*\xi,\xi\rangle=\langle \Psi(f^*)\xi,\xi\rangle
$$
as required.

For the unitary equivalence statement, we compute that for any $f\in C_c(G)$,
$$
\langle \xi,f\xi\rangle_H=\langle \xi,\Psi(f)\xi\rangle_H=\int_{G^{(0)}}\sum_{g\in G^x}f(g)d\mu_{\xi}(x)=\langle 1,\tau_{\mu_\xi}(f)1\rangle_{H_{\mu_\xi}}.
$$
Hence the unitary equivalence statement follows from the uniqueness of $*$-representations of an involutive algebra with specified cyclic vector (see for example \cite[Proposition 2.4.1]{Dixmier:1977vl}). 
\end{proof}

The following corollary is immediate.  It shows in particular that for many groupoids, $C_c(G)$ does not admit any representations with non-zero constant vectors.  This is in sharp contrast to the group case where such representations always exist.

\begin{corollary}\label{inv meas}
A groupoid $G$ admits a representation with non-zero constant vectors if and only if $G^{(0)}$ admits an invariant probability measure. \qed
\end{corollary}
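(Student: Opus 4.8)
The plan is to read off both implications from Proposition~\ref{inv vec char} and Example~\ref{triv rep}, with essentially no extra work; the corollary really is a bookkeeping exercise once those results are in hand.

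For the ``if'' direction I would start from an invariant probability measure $\mu$ on $G^{(0)}$ and form the trivial representation $(H_\mu,\tau_\mu)$ of Example~\ref{triv rep}. The constant function $1\in L^2(G^{(0)},\mu)$ is constant in the usual sense, hence a constant vector for $\tau_\mu$ by the first assertion of that example, and it is non-zero since $\|1\|^2=\mu(G^{(0)})=1$. Conversely, given a representation $(H,\pi)$ with a non-zero constant vector $\xi$, Proposition~\ref{inv vec char} immediately tells me that $\mu_\xi : f\mapsto\langle\xi,f\xi\rangle$ is an invariant measure on $G^{(0)}$, so all that remains is to check it has positive total mass and then normalise. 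Here I would use that $G^{(0)}$ is compact and clopen in $G$, so that the indicator function $1_{G^{(0)}}$ belongs to $C_c(G)$ and satisfies $\Psi(1_{G^{(0)}})=1_{G^{(0)}}$ (for $g\in G^x$ one has $g\in G^{(0)}$ exactly when $g=x$); since $\xi$ is constant this forces $\pi(1_{G^{(0)}})\xi=\xi$, whence $\mu_\xi(G^{(0)})=\langle\xi,\pi(1_{G^{(0)}})\xi\rangle=\|\xi\|^2>0$. Then $\|\xi\|^{-2}\mu_\xi$ is the required invariant probability measure.

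I do not anticipate a genuine obstacle: the substantive content is contained in Proposition~\ref{inv vec char}. The only step needing a moment's thought is the normalisation in the ``only if'' direction, and that reduces to the elementary observation that $\Psi$ fixes the unit of $C(G^{(0)})$, so that this unit acts as the identity operator on every constant vector and hence $\mu_\xi$ has total mass $\|\xi\|^2$.
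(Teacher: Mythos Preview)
Your argument is correct and matches the paper's intended approach: the corollary is marked there as immediate from Proposition~\ref{inv vec char} and Example~\ref{triv rep}, and you have simply spelled this out. One small correction: the equality $\pi(1_{G^{(0)}})\xi=\xi$ does not follow from $\xi$ being constant---applying the defining condition with $f=1_{G^{(0)}}$ only yields the tautology $\pi(1_{G^{(0)}})\xi=\pi(\Psi(1_{G^{(0)}}))\xi=\pi(1_{G^{(0)}})\xi$---but rather from the fact that $1_{G^{(0)}}$ is the unit of $C_c(G)$ and the representation is (tacitly) taken to be nondegenerate, hence unital; with that in hand your computation $\mu_\xi(G^{(0)}) = \|\xi\|^2 > 0$ goes through.
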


We are now ready to give our definition of property (T).  

\begin{definition}\label{basic t}
Let $G$ be a groupoid.  A subset $K$ of $G$ is a \emph{Kazhdan set} if there exists $c>0$ such that for any representation $(H,\pi)$ of $C_c(G)$ and any $\xi\in H_\pi$, there exists $f\in C_c(G)$ with support in $K$ and $\|f\|_I\leq 1$ such that $\|f\xi-\Psi(f)\xi\|\geq c\|\xi\|$.

The groupoid $G$ has \emph{topological property (T)} if it admits a compact Kazhdan set.
\end{definition}

We will generally just say `property (T)', omitting the word `topological' unless we need to make a distinction with the measure-theoretic case.  If $K$ is a Kazhdan set for $G$ and $c>0$ satisfies the condition in Definition \ref{basic t}, then $(K,c)$ will be called a \emph{Kazhdan pair}, and $c$ will be called a \emph{Kazhdan constant}.   We will give examples in the next section. 

We will also be interested in the following family of weaker variants of property (T).

\begin{definition}\label{prop t def}
Let $G$ be a groupoid, and let $\mathcal{F}$ be a class of representations of $C_c(G)$.  A subset $K$ of $G$ is a \emph{Kazhdan set for $\mathcal{F}$} if there exists $c>0$ such that for any representation $(H,\pi)$ of $C_c(G)$ in the collection $\mathcal{F}$, and any $\xi\in H_\pi$, there exists $f\in C_c(G)$ with support in $K$ and $\|f\|_I\leq 1$ such that $\|f\xi-\Psi(f)\xi\|\geq c\|\xi\|$.

The groupoid $G$ has \emph{(topological) property (T) with respect to $\mathcal{F}$} if it admits a compact Kazhdan set. 
\end{definition}

We will again talk about Kazhdan pairs and constants with respect to $\mathcal{F}$ in the obvious ways.  

Note that property (T) as in Definition \ref{basic t} is the same as property (T) for the family of all representations of $C_c(G)$.  In general, the larger $\mathcal{F}$ is, the stronger a condition having property (T) with respect to $\mathcal{F}$ is, so property (T) itself is the strongest variant.

We will be particularly interested in the following example of a family of representations.

\begin{example}\label{reg reps}
For $x\in G^{(0)}$, the \emph{regular representation} of $C_c(G)$ associated to $x$ is the pair $(\ell^2(G_x),\pi_x)$, where 
$$
(\pi_x(f)\xi)(g):=\sum_{h\in G_x}f(gh^{-1})\xi(h)
$$
for $f\in C_c(G)$ and $\xi\in \ell^2(G_x)$ (compare \cite[Section 2.3.4]{Renault:2009zr}).  We denote the family of all such representations by $\mathcal{F}_r$.  This family is particularly interesting as the reduced $C^*$-algebra $C^*_r(G)$ is (by definition) the completion of $C_c(G)$ for the norm 
$$
\|f\|_r:=\sup_{x\in G^{(0)}}\|\pi_x(f)\|_{\mathcal{B}(\ell^2(G_x))}.
$$
\end{example}

Let us conclude this section with a remark on possible generalisations.

\begin{remark}\label{prop t gen rem}
There are several natural generalizations of the definition of property (T) above.  We sketch some of these out here; we would be very happy if someone else explores these in future work.

One could consider more general locally compact groupoids with Haar system (and compact unit space).  Having replaced the sum by an integral with respect to Haar measure in the definition of $\Psi:C_c(G)\to C(G^{(0)})$ (Definition \ref{con vec def}), everything else makes sense in this level of generality.  It would also be natural to expand the definition to cover non-compact base spaces.  For this it seems most reasonable to proceed as follows: say that a subset $E$ of a groupoid $G$ is \emph{fibrewise compact} if for any compact subset $K$ of $G^{(0)}$, $E\cap G_K^K$ is compact.  Then define property (T) for a groupoid with possibly non-compact base space to mean that there exists a fibrewise compact Kazhdan set.  

Another natural generalisation would be to look at broader classes of representations of $C_c(G)$: for example, Hilbert space representations that are not $*$-representations, or representations on suitable classes of Banach spaces.  Indeed, there has been a great deal of relatively recent interesting work in the group case in these settings: for example \cite{Bader:2007eg,Lafforgue:2007fj,Drutu:2015aa}.  

As for the analogues in the group case, we expect these generalizations would be interesting.  We did not pursue any of these seriously mainly to keep the current paper down to a relatively reasonable length, and minimize our discussion of technical issues.
\end{remark}

\section{Examples}\label{ex sec}

In this section, we discuss some basic examples of groupoids with property (T).  We remind the reader that our groupoids are always locally compact, Hausdorff, \'{e}tale, and have compact base space as in Convention \ref{gpd conv}.  We will not repeat these assumptions in the body of the section.

\subsection{Trivial and compact groupoids}

The most basic class of groupoids with property (T) are the trivial groupoids, i.e.\ those for which $G=G^{(0)}$.  Indeed, in this case for any representation $(H,\pi)$ of $C_c(G)$, $H^\pi=H$, so the definition is vacuous.  

The second most basic class probably consists of compact groupoids as in the next result.

\begin{proposition}\label{com gpd t}
Any compact groupoid has property (T).
\end{proposition}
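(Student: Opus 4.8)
The plan is to write down an explicit ``averaging projection'' $\chi\in C_c(G)$, the groupoid analogue of the normalised Haar element of a compact group, and to show that $G$ itself is a compact Kazhdan set with Kazhdan constant $1$. Since $G$ is compact, the constant function $1_G$ lies in $C_c(G)$, so $c:=\Psi(1_G)\in C(G^{(0)})$; concretely $c(x)=|G^x|$, which is finite (a compact \'etale groupoid is covered by finitely many bisections, each meeting $G^x$ in at most one point) and satisfies $c(x)\geq 1$ since $x\in G^x$. A key point is that $c$ is constant on orbits: left translation $h\mapsto gh$ is a bijection $G^{s(g)}\to G^{r(g)}$, and $h\mapsto h^{-1}$ gives $c(x)=|G_x|$ as well, so in particular $c(s(g))=c(r(g))$ for every $g$. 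Hence $\chi\in C(G)=C_c(G)$, defined by $\chi(g):=1/c(r(g))$, makes sense and is continuous, being $1/c$ precomposed with the continuous map $r$.

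Next I would record, by direct computation with the convolution and $I$-norm formulas, the basic properties of $\chi$: first, $\|\chi\|_I=1$, because each source or range fibre sum of $|\chi|$ equals $|G^x|/c(x)=1$ (using $c(s(g))=c(r(g))$ for the source fibres); second, $\Psi(\chi)=1_{G^{(0)}}$, the unit of $C_c(G)$; third, $\chi^*=\chi$ (again using $c(s(g))=c(r(g))$); and, crucially, $f\chi=\Psi(f)\chi$ for all $f\in C_c(G)$. For this last identity one computes $(f\chi)(g)=\sum_{hk=g}f(h)\chi(k)$ and observes that $\chi(k)=\chi(h^{-1}g)=1/c(s(h))=1/c(r(g))$ does not depend on $h$, so $(f\chi)(g)=\Psi(f)(r(g))\chi(g)$; and $(\Psi(f)\chi)(g)$ only receives the contribution $h=r(g)$, giving the same value. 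Taking $f=\chi$ and combining with $\Psi(\chi)=1_{G^{(0)}}$ yields $\chi^2=\chi$, so $\chi$ is a self-adjoint idempotent in $C_c(G)$.

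Now let $(H,\pi)$ be any representation of $C_c(G)$ and $\xi\in H_\pi$. Then $\pi(\chi)$ is a self-adjoint projection, and the identity $f\chi=\Psi(f)\chi$ shows $f\,\pi(\chi)\eta=\pi(f\chi)\eta=\pi(\Psi(f)\chi)\eta=\Psi(f)\,\pi(\chi)\eta$ for all $f$ and $\eta$; hence $\mathrm{range}(\pi(\chi))\subseteq H^\pi$, and since $\xi\perp H^\pi$ we get $\pi(\chi)\xi=0$. On the other hand $\pi(1_{G^{(0)}})$ is a projection whose kernel is annihilated by all of $\pi(C_c(G))$ (as $1_{G^{(0)}}$ is a unit), hence consists of constant vectors, so $\ker\pi(1_{G^{(0)}})\subseteq H^\pi$ and therefore $\pi(1_{G^{(0)}})\xi=\xi$. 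Consequently
$$
\|\chi\xi-\Psi(\chi)\xi\|=\|\pi(\chi)\xi-\pi(1_{G^{(0)}})\xi\|=\|0-\xi\|=\|\xi\|.
$$
Since $\chi$ is supported in the compact set $G$ with $\|\chi\|_I=1$, this shows that $(G,1)$ is a Kazhdan pair, so $G$ has topological property (T).

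I expect the only real subtlety is the identity $f\chi=\Psi(f)\chi$ together with the orbit-invariance of $x\mapsto|G^x|$, which is precisely what promotes $\chi$ from a ``fibrewise'' idempotent to an honest projection in $C_c(G)$; once the correct $\chi$ is in hand, everything else is routine bookkeeping with the convolution product, and no non-degeneracy hypothesis on $\pi$ is needed.
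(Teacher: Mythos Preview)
Your proof is correct and follows essentially the same approach as the paper: your averaging element $\chi(g)=1/|G^{r(g)}|$ is exactly the paper's $p=\chi/\Psi(\chi)$ (where the paper's $\chi$ denotes the constant function $1_G$), and both arguments amount to showing that this element is a projection onto constant vectors, whence $(G,1)$ is a Kazhdan pair. You supply considerably more detail than the paper --- in particular the orbit-invariance of $x\mapsto|G^x|$, the identity $f\chi=\Psi(f)\chi$, and the careful handling of the possibility that $\pi(1_{G^{(0)}})\neq 1$ --- all of which the paper leaves as ``direct checks''.
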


\begin{proof}
We claim that $G$ itself is a Kazhdan set, with associated Kazhdan constant one.  Indeed, let $\chi:G\to\C$ be the constant function with value one everywhere, and let $p=\chi / (\Psi(\chi)\circ r)$.  Then one checks directly that $p$ is a well-defined element of $C_c(G)$, that for all $f\in C_c(G)$ we have $fp=\Psi(f)p$ (here the products are convolution in $C_c(G)$), that $p=p^*$, and that $\Psi(p)=1$.  From these computations it follows also that $p^2=p$, and that the image of $p$ in any representation of $C_c(G)$ is exactly the orthogonal projection onto the constant vectors.  Hence for any representation $(\h,\pi)$, and any $\xi\in \h_\pi$ we have that 
$$
\|p\xi-\Psi(p)\xi\|=\|0-\xi\|\geq \|\xi\|,
$$
which gives the desired conclusion.
\end{proof}

\subsection{Groups}

In this subsection we show that our version of property (T) reduces to the usual one for discrete groups (i.e.\ groups that are \'{e}tale when considered as groupoids).  

The following definition is taken from \cite[Definition 1.1.3]{Bekka:2000kx}.  For a Hilbert space $H$, let $\mathcal{U}(H)$ denote the unitary group of $H$.

\begin{definition}\label{gpt}
Let $G$ be a discrete group, and let 
$$
u:G\to \mathcal{U}(\h)
$$
be a unitary representation of $G$.  A vector $\xi\in \h$ is \emph{constant} if $u_g\xi=\xi$ for all $g\in G$.

A subset $S$ of $G$ is a \emph{Kazhdan set} if there exists $c>0$ such that if $(H,u)$ is a unitary representation of $G$ such that 
$$
\|u_g\xi-\xi\|<c\|\xi\|
$$
for all $g\in S$, then there exists a non-zero invariant vector in $H$.

The group $G$ has \emph{property (T)} if it admits a finite Kazhdan set.
\end{definition}

We now have two definitions of `Kazhdan set' for groups: Definition \ref{gpt} and the specialisation of Definition \ref{prop t def}.  Temporarily, if $G$ is a discrete group let us say a \emph{group Kazhdan set} a Kazhdan set in the sense of Definition \ref{prop t def} and a \emph{groupoid Kazhdan set} a Kazhdan set in the sense of Definition \ref{prop t def}, and similarly for the notions of invariant vector.

\begin{proposition}\label{2kaz}
Let $G$ be a discrete group.  Then a finite subset of $G$ is a group Kazhdan set if and only if it is a groupoid Kazhdan set.
\end{proposition}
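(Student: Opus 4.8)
The plan is to relate the two conditions by translating between the "spectral gap" formulation (Definition \ref{gpt}) and the "$C_c(G)$-flavored" formulation (Definition \ref{prop t def}), using the identifications from Example \ref{gp ex}: representations of $C_c(G)=\C[\Gamma]$ correspond to unitary representations $u$ of $\Gamma$, the map $\Psi$ sends $\sum a_g u_g$ to $\sum a_g$, and constant vectors agree. Since $G^{(0)}$ is a point, $\|f\|_I = \sum_{g}|a_g|$ for $f = \sum a_g u_g$, and $\h^\pi = \h_\pi^\perp$ is a genuine subrepresentation here, so for $\xi \in \h_\pi$ we have that $\xi$ has no nonzero invariant component in the subrepresentation it generates.

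For the "only if" direction (group Kazhdan $\Rightarrow$ groupoid Kazhdan is false as stated; I mean the direction I can do more cheaply first): suppose $S$ is a group Kazhdan set with constant $c$. Given a representation with $\xi \in \h_\pi$, I want $f$ supported in $S$, $\|f\|_I \le 1$, with $\|f\xi - \Psi(f)\xi\| \ge c'\|\xi\|$. Taking $f = \delta_g$ for $g \in S$ gives $\|f\xi - \Psi(f)\xi\| = \|u_g\xi - \xi\|$ and $\|f\|_I = 1$; so if $S$ is a groupoid Kazhdan set with constant $c$, then for any unit vector $\xi$ orthogonal to the invariant vectors, $\sup_{g\in S}\|u_g\xi-\xi\|\ge c$, which is exactly the spectral-gap reformulation of Definition \ref{gpt} (one must invoke the standard fact — e.g.\ from \cite[Proposition 1.1.5]{Bekka:2000kx} or a direct Hilbert-space decomposition argument — that "every unit vector orthogonal to invariants is moved by at least $c$ by some $g\in S$" is equivalent to "every representation with $(S,c')$-almost-invariant vectors has a nonzero invariant vector," for a suitably adjusted constant). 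This handles groupoid Kazhdan $\Rightarrow$ group Kazhdan.

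Conversely, suppose $S$ is a group Kazhdan set with constant $c$; I may assume $S = S^{-1}$ and $S \ni e$, enlarging if necessary (this does not affect finiteness, and a subset of $G$ containing a Kazhdan set is Kazhdan). I claim $S$ is a groupoid Kazhdan set. Decompose $\h = \h^\pi \oplus \h_\pi$; it suffices to treat $\xi \in \h_\pi$, which is itself a subrepresentation with no nonzero invariant vectors. By the spectral-gap form of Definition \ref{gpt}, for the unit vector $\xi/\|\xi\|$ there exists $g\in S$ with $\|u_g\xi - \xi\| \ge c\|\xi\|$; then $f := \delta_g$ has support in $S$, $\|f\|_I = 1$, and $\|f\xi - \Psi(f)\xi\| = \|u_g\xi - \xi\| \ge c\|\xi\|$. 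So $(S,c)$ is a groupoid Kazhdan pair.

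The main subtlety — and the only place requiring care — is the passage between the two formulations of Definition \ref{gpt}: the literal statement is "almost-invariant vectors imply a nonzero invariant vector," whereas what I use is the quantitative "every unit vector orthogonal to invariants is displaced by $\ge c$." These are equivalent but with a change of constant (one direction is immediate; the other uses that if no such uniform displacement bound held, one could extract a sequence of almost-invariant unit vectors orthogonal to the invariants, contradicting the existence of the spectral gap after passing to the orthogonal complement of invariants). I would state this equivalence explicitly as the first step, cite \cite{Bekka:2000kx} for it, and then the rest is the routine translation above. No compactness or measure theory enters since $G^{(0)}$ is a point; finiteness of the Kazhdan set is preserved in both directions because the same finite set $S$ works throughout.
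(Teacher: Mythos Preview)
Your overall strategy matches the paper's: in the group $\Rightarrow$ groupoid direction both you and the paper restrict to the subrepresentation $H_\pi$, pick $g\in S$ with $\|u_g\xi-\xi\|\geq c\|\xi\|$, and take $f=\delta_g$. That direction is fine (and the symmetrization $S=S^{-1}\ni e$ is unnecessary; nothing uses it).

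There is, however, a genuine gap in your groupoid $\Rightarrow$ group direction. You write that ``taking $f=\delta_g$ \dots gives $\|f\xi-\Psi(f)\xi\|=\|u_g\xi-\xi\|$ \dots so if $S$ is a groupoid Kazhdan set with constant $c$, then for any unit vector $\xi$ orthogonal to the invariant vectors, $\sup_{g\in S}\|u_g\xi-\xi\|\geq c$''. This is a non sequitur: the groupoid Kazhdan condition only produces \emph{some} $f$ supported in $S$ with $\|f\|_I\leq 1$ and $\|f\xi-\Psi(f)\xi\|\geq c\|\xi\|$; there is no reason that $f$ is a point mass, so knowing what happens for $\delta_g$ does not give the conclusion. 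What is actually needed is the convexity estimate
\[
\|f\xi-\Psi(f)\xi\|=\Bigl\|\sum_{g}f(g)(u_g-1)\xi\Bigr\|\leq \sum_{g}|f(g)|\,\|u_g\xi-\xi\|\leq \|f\|_I\,\sup_{g\in S}\|u_g\xi-\xi\|,
\]
from which $\sup_{g\in S}\|u_g\xi-\xi\|\geq c\|\xi\|$ follows. This is exactly the computation the paper does: it starts with a $\xi$ satisfying $\|u_g\xi-\xi\|<c\|\xi\|$ for all $g\in K$, applies the inequality above to conclude $\|f\xi-\Psi(f)\xi\|<c\|\xi\|$ for every admissible $f$, and hence $\xi\notin H_\pi$, so $H^\pi\neq\{0\}$. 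Once you insert this estimate, your detour through the ``equivalence of formulations'' (and the attendant change of constant) becomes unnecessary --- the paper works directly with Definition~\ref{gpt} and obtains the same Kazhdan constant in both directions.
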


\begin{proof}
Assume first that $K$ is a groupoid Kazhdan set with associated Kazhdan constant $c>0$.  Let $u:G\to \mathcal{U}(\h)$ be a unitary representation of $G$ and $\xi\in H$ be such that $\|u_g\xi-\xi\|<c\|\xi\|$ for all $g\in K$.  Denote by $\pi$ the usual extension of $u$ to $C_c(G)=\C[G]$ defined by 
$$
\pi:\sum_{g\in G}a_g g \mapsto \sum_{g\in G} a_g u_g.
$$
Letting $f=\sum f(g)g\in C_c(G)$ be supported in $K$ with $\|f\|_I\leq 1$, we see that with $\xi$ as above,
$$
\|\pi(f)\xi-\pi(\Psi(f))\xi\|\leq \sum_{g\in G}|f(g)|\|u_g\xi-\xi\|<c\|f\|_I\sup_{g\in K}\|u_g\xi-\xi\|\leq c\|\xi\|.
$$
As $(K,c)$ is a groupoid Kazhdan pair, this forces $H\neq H_\pi$, and so $H^\pi\neq \{0\}$, and $K$ is a group Kazhdan set.

Conversely, say $S$ is a group Kazhdan set with associated Kazhdan constant $c>0$.  Let $\pi:C_c(G)\to \mathcal{B}(\h)$ be a representation of $C_c(G)$, and let $u$ be the associated unitary representation of $G$ defined by $u_g=\pi(\chi_{\{g\}})$, where $\chi_{\{g\}}$ is the characteristic function of the singleton $\{g\}$.  Let $\xi$ be a vector in $\h_\pi$, and note that as $u$ leaves $H^\pi$ invariant it restricts to a representation on $\h_\pi$.  As $H_\pi$ has no invariant vectors and as $S$ is a group Kazhdan set, there exists $g\in S$ with $\|u_g\xi-\xi\|\geq c\|\xi\|$.  Then the function $f=\chi_{\{g\}}$ is supported in $S$, satisfies $\|f\|_I\leq 1$, and also that 
$$
\|f\xi-\Psi(f)\xi\|\geq c\|\xi\|.
$$ 
Hence $S$ is also a groupoid Kazhdan set.
\end{proof}

\begin{corollary}\label{gpgpd}
A discrete group has property (T) in the sense of Definition \ref{basic t} if and only if it has it in the sense of Definition \ref{gpt}.  \qed
\end{corollary}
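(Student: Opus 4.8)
The plan is to deduce this directly from Proposition \ref{2kaz}, which already contains all of the real content; what remains is pure bookkeeping about quantifiers and about which subsets count as ``small''.

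First I would record the trivial observation that when a discrete group $G$ is regarded as an \'{e}tale groupoid (so that $G^{(0)}$ is a single point), the compact subsets of $G$ are exactly the finite subsets. Hence the assertion ``$G$ admits a compact Kazhdan set'' in the sense of Definition \ref{basic t} is literally the assertion that $G$ admits a \emph{finite} groupoid Kazhdan set, where ``Kazhdan set'' is understood with respect to the family $\mathcal{F}$ of \emph{all} representations of $C_c(G)=\C[G]$ (recall that Definition \ref{basic t} is the case of Definition \ref{prop t def} in which $\mathcal{F}$ is the class of all representations).

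Next I would invoke Proposition \ref{2kaz}, which says precisely that a finite subset of $G$ is a group Kazhdan set (Definition \ref{gpt}) if and only if it is a groupoid Kazhdan set. Chaining these together gives: $G$ has property (T) in the sense of Definition \ref{basic t} $\iff$ $G$ has a finite groupoid Kazhdan set $\iff$ $G$ has a finite group Kazhdan set $\iff$ $G$ has property (T) in the sense of Definition \ref{gpt}. That completes the argument.

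I do not anticipate any genuine obstacle, since the substance is entirely in Proposition \ref{2kaz}. The only point worth a second glance is that both formulations of property (T) are ``$\exists$ a small set which is Kazhdan'', with the Kazhdan constant allowed to depend on the set, so that the biconditional of Proposition \ref{2kaz} (stated for a fixed finite set) can simply be applied with the set ranging over all finite subsets of $G$; once that is noted, the corollary is immediate.
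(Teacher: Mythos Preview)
Your proposal is correct and matches the paper's approach exactly: the corollary is stated with a \qed\ and no proof precisely because it follows immediately from Proposition \ref{2kaz} together with the observation that compact subsets of a discrete group are finite. Your bookkeeping about quantifiers and the identification of ``compact'' with ``finite'' is all that is needed.
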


\subsection{Coarse spaces}

Yu and the second author introduced a notion called \emph{geometric property (T)} in \cite{Willett:2013cr} for monogenic, bounded geometry coarse spaces.  On the other hand, Skandalis, Tu, and Yu \cite{Skandalis:2002ng} introduced a \emph{coarse groupoid} $G(X)$ associated to any bounded geometry coarse space $X$.  Our goal in this subsection is to explain why geometric property (T) for $X$ is equivalent to property (T) for $G(X)$.  This example is one of the main motivations behind our definition of property (T) for groupoids. 

Let $X$ be a coarse space as in \cite[Definition 2.3]{Roe:2003rw}.  Precisely, this means that $X$ is equipped with a collection $\mathcal{E}$ of subsets of $X\times X$ called \emph{controlled sets} which contains the diagonal, and is closed under the formation of subsets, finite unions, inverses, and products, where the inverse of $E$ is defined by 
$$
E^{-1}:=\{(x,y)\in E\mid (y,x)\in E\}
$$
and the product of two subsets $E$ and $F$ of $X\times X$ is defined to be 
$$
E\circ F:=\{(x,z)\in X\times X\mid \text{ there exists } y\in X \text{ with } (x,y)\in E \text{ and } (y,z)\in F\}.
$$
Such a collection $\mathcal{E}$ is called a \emph{coarse structure} on $X$.  A coarse structure has \emph{bounded geometry} if the suprema of cardinalities of `slices'
$$
\sup_{x\in X}|\{y\in X\mid (x,y)\in E\}|\quad \text{and}\quad \sup_{y\in X}|\{x\in X\mid (x,y)\in E\}|
$$
are both finite.  A controlled set $E$ \emph{generates} the coarse structure if $\mathcal{E}$ is the smallest coarse structure containing $E$, and a coarse structure $\mathcal{E}$ is \emph{monogenic} if a generator exists.

The \emph{uniform Roe $*$-algebra} of a bounded geometry, monogenic coarse space $X$, denoted $\C_u[X]$, consists of all $X$-by-$X$ matrices $a=(a_{xy})_{x,y\in X}$ with uniformly bounded complex entries, and such that the set $\{(x,y)\in X\times X\mid a_{xy}\neq 0\}$ is controlled.  The uniform Roe $*$-algebra is then a $*$-algebra when equipped with the usual matrix operations.  Following \cite[Section 3]{Willett:2013cr}, define a linear map
$$
\Phi:C_u[X]\to \ell^\infty(X),\quad \Phi(a)(x):=\sum_{y\in X}a_{xy}.
$$
A \emph{representation} of $\C_u[X]$ is by definition a $*$-representation as bounded operators on some Hilbert space.  If $(H,\pi)$ is such a representation, then a vector $\xi\in H$ is called \emph{constant} if $a\xi=\Phi(a)\xi$ for all $a\in \C_u[X]$.  We will denote the constant vectors in $H$ by $H_c$.

The following definition comes from \cite[Proposition 3.8]{Willett:2013cr}.

\begin{definition}\label{geo t def}
Let $X$ be a bounded geometry, monogenic coarse space.  Then $X$ has \emph{geometric property (T)} if for every generating controlled set $E$ there exists $c>0$ such that for every representation $(H,\pi)$ of $\C_u[X]$ and every vector $\xi\in H_c^\perp$ there exists $a\in C_u[X]$ with $\{(x,y)\in X\times X\mid a_{xy}\neq 0\}$ contained in $E$, and such that 
$$
\|a\xi-\Phi(a)\xi\|\geq c\sup_{x,y}|a_{xy}|\|\xi\|.
$$
\end{definition}

We now recall the definition of the coarse groupoid $G(X)$ from \cite{Skandalis:2002ng}; see also the expositions in \cite[Chapter 10]{Roe:2003rw} and \cite[Appendix C]{Spakula:2014aa}.  Let $\beta X$ be the Stone-\v{C}ech compactification of $X$.  For each controlled set $E$, let $\overline{E}$ be the closure of $E$ inside $\beta X\times \beta X$ for the natural inclusion $X\times X\subseteq \beta X\times \beta X$, which one can check is a compact open set.  Equip each $\overline{E}$ with the subspace topology. Define 
$$
G(X) := \bigcup_{E\in \mathcal{E}} \overline{E}
$$
equipped with the weak topology it inherits as the union of open subsets $\overline{E}$: precisely, this means that a subset $U$ of $G(X)$ is open precisely when $U\cap \overline{E}$ is open in $\overline{E}$ for all $E\in \mathcal{E}$ (this is \emph{not} the topology it inherits as a subspace of $\beta X\times \beta X$).  Equip $G(X)$ with the groupoid operations it inherits as a subset of the pair groupoid $\beta X\times\beta X$.  It is shown in \cite[Theorem 10.20]{Roe:2003rw} that $G(X)$ thus defined is a (locally compact, Hausdorff, \'{e}tale) groupoid, with base space $\beta X$.

\begin{proposition}\label{t geot}
For a monogenic bounded geometry coarse space $X$, geometric property (T) for $X$ and property (T) for $G(X)$ are equivalent.  
\end{proposition}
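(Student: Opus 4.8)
The plan is to reduce the statement to the standard identification of the coarse structure of $X$ with the groupoid $G(X)$ at the level of the dense subalgebras $\C_u[X]$ and $C_c(G(X))$, and to check that the two definitions of property (T) then match up through this identification. The first (and main) step is to recall, from \cite{Skandalis:2002ng} and \cite[Chapter 10]{Roe:2003rw} (see also \cite[Appendix C]{Spakula:2014aa}), the following dictionary between $X$ and $G(X)$. Its base space is $\beta X$, so that $C(G(X)^{(0)})=\ell^\infty(X)$; a controlled set $E$ with $\Delta_X\subseteq E=E^{-1}$ partitions into finitely many graphs of partial bijections, whose closures $\overline{E_1},\dots,\overline{E_M}$ are clopen bisections of $G(X)$ with $\overline E=\bigsqcup_{i=1}^M\overline{E_i}$; one has $\overline{E\cup F}=\overline E\cup\overline F$, $\overline{E\circ F}=\overline E\,\overline F$ and $\overline{\Delta_X}=G(X)^{(0)}$; every compact subset of $G(X)$ lies in some $\overline E$; and restriction to $X\times X\subseteq\beta X\times\beta X$ gives a $*$-isomorphism $C_c(G(X))\cong\C_u[X]$, $f\mapsto(f(x,y))_{x,y\in X}$, which intertwines $\Psi$ with $\Phi$. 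Hence representations of $C_c(G(X))$ are exactly representations of $\C_u[X]$, the constant vectors $H^\pi$ of Definition \ref{con vec def} coincide with the coarse constant vectors $H_c$, and $f\in C_c(G(X))$ has support in $\overline E$ precisely when the matrix $a=(f(x,y))$ satisfies $\{(x,y)\mid a_{xy}\neq0\}\subseteq E$. I would also record the comparison
$$
\sup_{x,y}|a_{xy}|\ \le\ \|f\|_I\ \le\ M\sup_{x,y}|a_{xy}|
$$
valid whenever $f\leftrightarrow a$ is supported in $\overline E\leftrightarrow E$: the lower bound follows by restricting the supremum in the $I$-norm to units $x\in X$, and the upper bound from the fact that each range or source fibre of $\overline E$ meets each $\overline{E_i}$ at most once, together with $\|f\|_\infty=\sup_{x,y}|a_{xy}|$ (by density of $E$ in $\overline E$).

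Granting the dictionary, the forward implication is short. As $X$ is monogenic, fix a symmetric generator $E$ of the coarse structure with $\Delta_X\subseteq E$, and let $c>0$ be a constant for $E$ as in Definition \ref{geo t def}. I claim the compact set $K:=\overline E$ is a Kazhdan set for $G(X)$ with constant $c/M$. Indeed, given a representation $(H,\pi)$ of $C_c(G(X))$ and $\xi\in H_\pi$, view it as a representation of $\C_u[X]$ with $\xi\in H_c^\perp$, and apply geometric property (T) to get $a$ supported in $E$, normalised so that $\sup_{x,y}|a_{xy}|=1$, with $\|a\xi-\Phi(a)\xi\|\ge c\|\xi\|$. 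The corresponding $f\in C_c(G(X))$ is then supported in $K$ with $\|f\|_I\le M$, so $f/M$ has support in $K$, satisfies $\|f/M\|_I\le1$, and $\|(f/M)\xi-\Psi(f/M)\xi\|\ge(c/M)\|\xi\|$. Hence $G(X)$ has property (T).

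For the converse, suppose $G(X)$ has property (T), witnessed by a compact Kazhdan set of constant $c$. This set is contained in some $\overline{E'}$; enlarging $E'$ by a symmetric generator $E_0$ with $\Delta_X\subseteq E_0$ produces a symmetric generating controlled set $E:=E'\cup E_0$ with $\overline E\supseteq\overline{E'}$, and $\overline E$ is then itself a Kazhdan set of constant $c$ (since enlarging a Kazhdan set preserves the property). Reading this back through the dictionary — using now the lower bound $\sup_{x,y}|a_{xy}|\le\|f\|_I$ — shows that $E$ satisfies the condition of Definition \ref{geo t def} with constant $c$. It remains to pass from this single generating controlled set to all of them, which is part of the equivalence recorded in \cite[Proposition 3.8]{Willett:2013cr} (the groupoid analogue of the group fact that every generating set of a property (T) group is automatically a Kazhdan set); concretely it follows by a telescoping argument, since any generating controlled set $E''$ satisfies $E\subseteq (E'')^{\circ n}$ for some $n$, and after decomposing $\overline{(E'')^{\circ n}}$ into clopen bisections one writes a test function supported there as a bounded sum of $n$-fold convolution products of functions supported in $\overline{E''}$ with $I$-norm at most $1$, transferring the Kazhdan estimate to $\overline{E''}$ with a worse constant. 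This gives geometric property (T) for $X$.

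The step I expect to be the main obstacle is assembling the dictionary of the first paragraph cleanly: in particular, verifying that restriction to $X\times X$ really is a $*$-isomorphism $C_c(G(X))\cong\C_u[X]$ (surjectivity uses the clopen-bisection decomposition of $\overline E$ and the fact that every bounded function on $X$ extends continuously to $\beta X$), getting the constants in the $I$-norm comparison right, and the final telescoping reduction from one generating controlled set to all of them. Once those essentially bookkeeping points are settled, both implications are formal manipulations with the dictionary.
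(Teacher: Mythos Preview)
Your proposal is correct and follows essentially the same approach as the paper's proof: both rest on the $*$-isomorphism $C_c(G(X))\cong\C_u[X]$ via restriction to $X\times X$, the intertwining of $\Psi$ with $\Phi$, the fact that compact subsets of $G(X)$ sit in some $\overline E$, and the norm comparison $\sup_{x,y}|a_{xy}|\le\|f\|_I\le M\sup_{x,y}|a_{xy}|$ for $f$ supported in $\overline E$. The paper explicitly leaves the translation exercise to the reader after recording these ingredients; you have carried it out, including the reduction from a single generating controlled set to all of them (which the paper absorbs into its citation of \cite[Proposition 3.8]{Willett:2013cr}, exactly as you do).
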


\begin{proof}
For $f\in C_c(G(X))$, note that $f$ restricts to a function on $X\times X$.  Define an element $a^f\in \C_u[X]$ by the formula $a^f_{xy}:=f(x,y)$.  It is proved in \cite[Proposition 10.28]{Roe:2003rw} that the map
$$
C_c(G(X))\to \C_u[X], \quad f\mapsto a^f
$$
is a $*$-isomorphism.  It is moreover not difficult to see that this map takes $C(\beta X)$ to $l^\infty(X)$, and that it `intertwines' $\Psi$ and $\Phi$ in the sense that 
$$
\Phi(a^f)=a^{\Psi(f)}.
$$ 
It follows from this that representations $(H,\pi)$ of $\C_u[X]$ and of $C_c(G(X))$ are in one-to-one correspondence, and that the two notions of constant vectors that we have defined using $\Phi$ and $\Psi$ correspond.  The remainder of the proof is essentially a translation exercise: the key facts one has to know are that any compact subset $K$ of $G(X)$ is contained in the closure $\overline{E}$ of some controlled set (which is itself compact and open), and that for any controlled set $E$ there is a constant $M>0$ (coming from bounded geometry) such that for any $f\in C_c(G(X))$ with support in $\overline{E}$, we have 
$$
\frac{1}{M}\|f\|_I\leq \sup_{x,y}|a^f_{xy}|\leq \|f\|_I.
$$
We leave the remaining details to the reader.
\end{proof}

Note that the isomorphism $C_c(G(X))\cong \C_u[X]$ from the proof above gives rise to a natural representation of $C_c(G(X))$ on $\ell^2(X)$ by matrix multiplication of the corresponding element of $\C_u[X]$.  Let $\mathcal{F}_{\ell^2(X)}$ be the family of representations of $C_c(G(X))$ consisting of this single representation.  Then we get an interesting example of property (T) with respect to $\mathcal{F}_{\ell^2(X)}$ coming from \emph{expanders} as in the following definition (see the book \cite{Lubotzky:1994tw} for background on expanders). 

\begin{definition}\label{exp def}
We will think of edges in a graph $X$ as two-element subsets of $X$; in particular, our graphs have no loops, no multiple edges, and are undirected.  Let $X=(X_n)_{n=1}^\infty$ be a sequence of finite, connected graphs.  We will abuse notation, and also write $X$ for the disjoint union $X=\bigsqcup_{n=1}^\infty X_n$.  Assume that there is an absolute bound on the degree of all vertices in $X$, and that the cardinality of $X_n$ tends to infinity as $n$ tends to infinity.  Let $\mathcal{E}$ be the coarse structure on $X$ generated be the set 
$$
\{(x,y)\in X\times X\mid \{x,y\} \text{ an edge}\}
$$ 
and the diagonal.  Then the coarse space $X$ is bounded geometry (due to the bound on vertex degrees) and monogenic.

For each $n$, let now $\Delta_n$ be the graph Laplacian on $\ell^2(X_n)$ defined by 
$$
\Delta_n:\delta_x\mapsto \sum_{\{y,x\} \text{ an edge}}\delta_x-\delta_y.
$$
It follows from the formula 
$$
\langle \xi,\Delta_n \xi\rangle=\sum_{\{x,y\}\text{ an edge} }|\xi(x)-\xi(y)|^2
$$
that $\Delta_n$ is a positive operator with kernel consisting exactly of the constant functions in $\ell^2(X_n)$ (this uses that $X_n$ is connected).  The sequence $X$ is an \emph{expander} if there exists a constant $c>0$ such that for all $n$ the spectrum of $\Delta_n$ is contained in $\{0\}\cup [c,\infty)$.
\end{definition}

\begin{proposition}\label{exp t}
Let $X$ be an expander.  Then the associated coarse groupoid $G(X)$ has property (T) with respect to the singleton family $\mathcal{F}_{\ell^2(X)}$ consisting of the natural representation on $\ell^2(X)$.
\end{proposition}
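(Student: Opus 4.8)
The plan is to show that a suitable normalization of the graph Laplacian $\Delta=\bigoplus_n\Delta_n$ of Definition \ref{exp def} serves as the single function witnessing the Kazhdan condition for the representation of $C_c(G(X))$ on $\ell^2(X)$. Throughout I use the $*$-isomorphism $C_c(G(X))\cong \C_u[X]$, $f\mapsto a^f$, from the proof of Proposition \ref{t geot}, which carries $C(\beta X)$ onto $\ell^\infty(X)$ and intertwines $\Psi$ with $\Phi$; under this identification the representation in $\mathcal{F}_{\ell^2(X)}$ is exactly $\pi(f)=a^f$ acting by matrix multiplication on $\ell^2(X)$.

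First I locate the element. Since the vertex degrees of $X$ are uniformly bounded, $\Delta$ is a bounded operator with uniformly bounded matrix entries, and its support $\{(x,y):x=y\text{ or }\{x,y\}\text{ an edge}\}$ is contained in the union of the diagonal and the generating controlled set $E_0$, hence is controlled; so $\Delta\in\C_u[X]$ and I let $f_0\in C_c(G(X))$ be the corresponding function, whose support $K:=\mathrm{supp}(f_0)$ is automatically compact (concretely $K\subseteq G(X)^{(0)}\cup\overline{E_0}$). Because every row of $\Delta$ sums to $\deg(x)-\deg(x)=0$ we have $\Phi(\Delta)=0$, i.e.\ $\Psi(f_0)=0$. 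Put $f:=f_0/\|f_0\|_I$, so that $\mathrm{supp}(f)=K$, $\|f\|_I\le 1$, and still $\Psi(f)=0$.

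Next I claim $\ker(\Delta)\subseteq H^\pi$, so that $H_\pi=(H^\pi)^\perp\subseteq\ker(\Delta)^\perp$. Since each $X_n$ is connected, $\ker(\Delta_n)$ consists of the constant functions on $X_n$, so $\Delta\xi=0$ forces $\xi$ to be constant on each $X_n$. On the other hand, the collection of subsets of $\bigsqcup_n(X_n\times X_n)$ is a coarse structure on $X$ containing the edge set $E_0$, hence contains all of $\mathcal{E}$; thus every $a\in\C_u[X]$ is block diagonal for the decomposition $\ell^2(X)=\bigoplus_n\ell^2(X_n)$, and a one-line computation with the block $a^{(n)}$ gives $(a\xi)(x)=\Phi(a)(x)\xi(x)$ for such $\xi$, i.e.\ $\xi\in H^\pi$ in the sense of Definition \ref{con vec def}. (In fact $H^\pi=\ker(\Delta)$, but only the stated inclusion is needed.)

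Finally I use the spectral gap. The expander hypothesis gives $\mathrm{spec}(\Delta_n)\subseteq\{0\}\cup[c,\infty)$ for all $n$, and since $\mathrm{spec}(\Delta)=\overline{\bigcup_n\mathrm{spec}(\Delta_n)}$ and $\{0\}\cup[c,\infty)$ is closed, also $\mathrm{spec}(\Delta)\subseteq\{0\}\cup[c,\infty)$; hence $0$ is isolated in the spectrum, the spectral projection of $\Delta$ for $\{0\}$ is the orthogonal projection onto $\ker(\Delta)$, and $\|\Delta\eta\|\ge c\|\eta\|$ for every $\eta\in\ker(\Delta)^\perp$. Therefore, for any $\xi\in H_\pi$,
$$
\|f\xi-\Psi(f)\xi\|=\|a^f\xi\|=\frac{1}{\|f_0\|_I}\,\|\Delta\xi\|\ge\frac{c}{\|f_0\|_I}\,\|\xi\|,
$$
so $\big(K,\,c/\|f_0\|_I\big)$ is a Kazhdan pair for $\mathcal{F}_{\ell^2(X)}$ with $K$ compact, which is property (T) with respect to $\mathcal{F}_{\ell^2(X)}$ as in Definition \ref{prop t def}. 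The argument is mostly bookkeeping; the point that needs a little care is the inclusion $\ker(\Delta)\subseteq H^\pi$ — that coarse disconnectedness of $\bigsqcup_n X_n$ forces every element of $\C_u[X]$ to be block diagonal — together with the observation that the \emph{same} function $f$ may be used for all $\xi$ at once, which Definition \ref{prop t def} permits.
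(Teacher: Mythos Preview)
Your proof is correct. Both arguments hinge on the Laplacian $\Delta$ and its spectral gap, but they differ in how a Kazhdan function is extracted. The paper decomposes $\Delta=\sum_{i=1}^n (v_iv_i^*-v_i)^*(v_iv_i^*-v_i)$ with each $v_i$ a $\{0,1\}$-valued partial isometry supported on a bisection (so that $\Psi(v_i)=v_iv_i^*$), and then uses pigeonhole to find, for each $\xi\in H_\pi$, some $v_i$ with $\|(v_iv_i^*-v_i)\xi\|\geq (\sqrt{c}/n)\|\xi\|$; the Kazhdan set is the union of the supports of the $v_i$. You instead observe directly that $\Phi(\Delta)=0$ (the row sums vanish), so the normalized $\Delta$ itself is a single function $f$ witnessing the Kazhdan inequality for every $\xi$ at once. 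Your route is a bit shorter and yields a cleaner constant; the decomposition into bisection-supported pieces is not needed for the definition (Definition~\ref{prop t def} imposes no such restriction on $f$), though it does foreshadow the Laplacian construction $\Delta=\sum_i(\phi_i-\Psi(\phi_i))^*(\phi_i-\Psi(\phi_i))$ used later in the proof of Theorem~\ref{kaz proj}. Your justification of the inclusion $\ker(\Delta)\subseteq H^\pi$ via the block-diagonality of $\C_u[X]$ is also a little more explicit than the paper's ``it is not difficult to check''.
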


\begin{proof}
It is not difficult to check that a vector $\xi$ in $\ell^2(X)$ is constant for this representation of $C_c(G(X))$ if and only if it is constant as a function $X_n\to \C$ for each $n$.  Let $\Delta$ denote the operator on $\ell^2(X)$ that acts by $\Delta_n$ on each subspace $\ell^2(X_n)$.  If $\xi\in \ell^2(X)$ is in the orthogonal complement of the constant vectors, we must have that 
$$
\langle \xi,\Delta\xi\rangle \geq c\|\xi\|^2
$$
by the above comments on the spectrum and kernel of each $\Delta_n$.  On the other hand, a little combinatorics (compare \cite[Section 5]{Willett:2013cr}) shows that one can write 
$$
\Delta=\sum_{i=1}^n (v_iv_i^*-v_i)^*(v_iv_i^*-v_i)
$$ 
for some collection of partial isometries, each of which is represented by a $\{0,1\}$-valued function in $C_c(G(X))$ supported on a bisection.  We thus have that 
$$
\sum_{i=1}^n \|(v_iv_i^*-v_i)\xi\|^2\geq c\|\xi\|^2.
$$
As each $v_i$ is supported on a bisection, we have moreover that $v_iv_i^*=\Psi(v_i)$.  We must therefore have that for some $i$ 
$$
 \|(v_iv_i^*-v_i)\xi\|\geq \frac{\sqrt{c}}{n}\|\xi\|,
$$
which gives the desired result.
\end{proof}

\subsection{HLS groupoids and property $\tau$}

Our aim in this subsection is to discuss so-called HLS groupoids, and a connection to property $(\tau)$.  HLS groupoids are constructed from a discrete group and a collection of finite quotients; they were introduced by Higson, Lafforgue, and Skandalis in \cite[Section 2]{Higson:2002la} as part of their work on counterexamples to the Baum-Connes conjecture.  Property $(\tau)$ is a version of property (T) for groups that only sees information from representations that factor through finite quotients; see the book \cite{Lubotzky:1994tw} for background.   

The key ingredients for the construction of HLS groupoids are a discrete group, and an \emph{approximating sequence} $\mathcal{K}$ of subgroups: this means $\mathcal{K}$ is a nested sequence 
$$
K_1\geq K_2\geq \cdots 
$$
of finite index normal subgroups of $\Gamma$ such that the intersection $\bigcap K_n$ is the trivial group.  Given such a group and approximating sequence, let $\Gamma_n:=\Gamma/K_n$ be the corresponding quotient group for each $n$, and $q_n:\Gamma\to \Gamma_n$ the quotient map.  Define also $\Gamma_\infty=\Gamma$, and $q_\infty:\Gamma\to\Gamma_\infty$ to be the identity map.

\begin{definition}\label{hls gpd}
Let $\Gamma$ be a discrete group with a fixed approximating sequence $\mathcal{K}$ as above.  Let $\overline{\N}=\N\cup\{\infty\}$ be the one-point compactification of the natural numbers, equipped with the usual topology and order structure.  The associated \emph{HLS groupoid} has as underlying set 
$$
G_\mathcal{K}:=\bigsqcup_{n\in \overline{\N}} \{n\}\times \Gamma_n.
$$
It is equipped with the topology generated by the following sets: $\{(n,g)\}$ for $n\in \N$ and $g\in \Gamma_n$; and $\{(n,q_n(g))\in G_\mathcal{K}\mid n\in \overline{\N}, n\geq N\}$ as $N$ ranges over $\N$, and $g$ over $\Gamma$.  The base space is 
$$
G^{(0)}:=\{(n,g)\in G_\mathcal{K}\mid g\text{ is the identity $e$ of }\Gamma_n\},
$$
and the range and source maps are given by $r(n,g)=s(n,g)=(n,e)$.  Composition and inverses are defined using the group operations in each fibre $\{n\}\times \Gamma_n$.
\end{definition}

For an HLS groupoid $G_{\mathcal{K}}$ built as above, we call $\Gamma$ the \emph{parent group}.

In \cite[Lemma 2.4]{Spakula:2013ys}, it was proved that $G_{\mathcal K}$ is (topologically) amenable if and only if the parent group $\Gamma$ is amenable; thus amenability of $G_{\mathcal{K}}$ only sees the parent group and not the approximating sequence.  In this section, we will show a similar result for property (T): $G_\mathcal{K}$ has property (T) if and only if the parent group $\Gamma$ does.  More subtly, we will also give a result that takes the approximating sequence into account: $G_\mathcal{K}$ has property (T) with respect to the family of representations that extend to $C^*_r(G_{\mathcal{K}})$ if and only if $\Gamma$ has property $(\tau)$ with respect to the approximating sequence $\mathcal{K}$ (we recall the definition of property $(\tau)$ below).

For both results, we need a lemma relating $C_c(G_{\mathcal{K}})$ to the group algebra $\C[\Gamma]$.  

\begin{lemma}\label{hls group}
Let $G_{\mathcal{K}}$ be an HLS groupoid associated to the discrete group $\Gamma$ and approximating sequence $\mathcal{K}$.  Then restriction to the fibre at infinity defines a surjective $*$-homomorphism $\sigma:C_c(G_{\mathcal{K}})\to \C[\Gamma]$.  On the other hand, for each $g\in \Gamma$, set $\chi_g$ to be the characteristic function of the set 
$$
\{(n,q_n(g))\in G_{\mathcal{K}}\mid n\in \N\}.
$$
Then the map 
$$
\Gamma\to C_c(G_{\mathcal{K}}), \quad g\mapsto \chi_g
$$
extends to an injective $*$-homomorphism $\iota:\C[\Gamma]\to C_c(G_{\mathcal{K}})$. 
\end{lemma}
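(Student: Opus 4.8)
The plan is to handle the two halves separately, in each case reducing the work to the single structural fact that $G_{\mathcal K}$ is ``block diagonal'' over $\overline\N$: since $r(n,h)=s(n,h)=(n,e)$, an ordered pair in $G_{\mathcal K}\times G_{\mathcal K}$ is composable only when both entries lie in a common fibre $\{n\}\times\Gamma_n$, and their product is then computed in the group $\Gamma_n$. Hence both the convolution and the involution on $C_c(G_{\mathcal K})$ are computed fibrewise, and $\sigma$ and $\iota$ are, morally, evaluation at the fibre over $\infty$ and the ``diagonal'' inclusion $g\mapsto(q_n(g))_n$.

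For $\sigma$ I would first record two topological facts about $G_{\mathcal K}$. The fibre at infinity $F:=\{\infty\}\times\Gamma$ is closed, since its complement $\bigsqcup_{n\in\N}\{n\}\times\Gamma_n$ is a union of basic open singletons; and the subspace topology on $F$ is discrete, since $\{(\infty,g)\}=F\cap\{(m,q_m(g)):m\in\overline\N,\ m\ge 1\}$ is relatively open. Identifying $F$ with $\Gamma$ via $(\infty,g)\leftrightarrow g$, the restriction $f|_F$ of any $f\in C_c(G_{\mathcal K})$ is a finitely supported function on $\Gamma$ (it is compactly supported on a discrete space), so $\sigma(f):=\sum_{g\in\Gamma}f(\infty,g)\,g$ is a well-defined element of $\C[\Gamma]$ and $\sigma$ is plainly linear. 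That $\sigma$ is multiplicative and $*$-preserving is then immediate from the fibrewise formulas $(f_1f_2)(\infty,g)=\sum_{hk=g}f_1(\infty,h)f_2(\infty,k)$ and $f^*(\infty,g)=\overline{f(\infty,g^{-1})}$, which are precisely the product and adjoint of $\C[\Gamma]$.

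For $\iota$ the crux is that $\chi_g$ is the characteristic function of a \emph{compact open} set, namely the bisection $\{(n,q_n(g)):n\in\overline\N\}$ on which $r$ and $s$ both restrict to homeomorphisms onto $G^{(0)}$; openness is immediate, and compactness follows because any open cover has a member containing the point $(\infty,g)$ and hence containing a whole tail $\{(n,q_n(g)):n\ge M\}$, leaving only finitely many points to cover (this is the one place compactness of $\overline\N$ is used). Thus $\chi_g\in C_c(G_{\mathcal K})$, and it is a unitary in the unital $*$-algebra $C_c(G_{\mathcal K})$. A one-line fibrewise computation gives $\chi_g\chi_h=\chi_{gh}$ (from $q_n(g)q_n(h)=q_n(gh)$) and $\chi_g^*=\chi_{g^{-1}}$, so $g\mapsto\chi_g$ is a group homomorphism from $\Gamma$ into the unitary group of $C_c(G_{\mathcal K})$; by the universal property of the group algebra it extends uniquely to a unital $*$-homomorphism $\iota:\C[\Gamma]\to C_c(G_{\mathcal K})$. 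Evaluating at the fibre at infinity shows $\sigma(\chi_g)=g$ (since $(\infty,h)$ lies in the support of $\chi_g$ iff $h=g$), i.e.\ $\sigma\circ\iota=\mathrm{id}_{\C[\Gamma]}$, which simultaneously establishes that $\iota$ is injective and that $\sigma$ is surjective.

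There is essentially no analytic or algebraic obstacle here: the entire content is the topological bookkeeping in $G_{\mathcal K}$ --- checking that $F$ is closed and discrete, and that the relevant tails and the supports of the $\chi_g$ are compact open bisections --- carried out against the somewhat unusual generating family for the topology on $G_{\mathcal K}$. Once those facts are in place, every algebraic assertion falls out of the block-diagonal description of composition, and one may even dispense with any direct verification of surjectivity of $\sigma$ in favour of reading it off from $\sigma\circ\iota=\mathrm{id}$.
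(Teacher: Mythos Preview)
Your proof is correct and carries out precisely the direct checks the paper leaves to the reader, including the key observation that $\sigma\circ\iota=\mathrm{id}_{\C[\Gamma]}$ yields both injectivity of $\iota$ and surjectivity of $\sigma$. Note that you have silently corrected what appears to be a typo in the statement: the support of $\chi_g$ must be $\{(n,q_n(g)):n\in\overline{\N}\}$ (including $n=\infty$) rather than $\{(n,q_n(g)):n\in\N\}$, since the latter set is open but not closed in $G_{\mathcal K}$ and its characteristic function would fail to be continuous.
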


\begin{proof}
The proof consists of direct checks that we leave to the reader.  Note that injectivity of $\iota$ follows as $\iota$ is split by $\sigma$.
\end{proof}

We now get to the first of our main results.

\begin{proposition}\label{hls t}
Let $G_{\mathcal{K}}$ be an HLS groupoid with parent group $\Gamma$.  Then $G_{\mathcal{K}}$ has property (T) if and only if $\Gamma$ has property (T).
\end{proposition}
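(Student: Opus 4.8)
The plan is to exploit the split surjection $\sigma\colon C_c(G_{\mathcal K})\to\C[\Gamma]$ and its section $\iota\colon\C[\Gamma]\to C_c(G_{\mathcal K})$ from Lemma~\ref{hls group}. The conceptual core is a dictionary between representations: if $(H,\pi)$ is a (nondegenerate) representation of $C_c(G_{\mathcal K})$ then the elements $u_g:=\pi(\iota(g))$ are unitaries, and $g\mapsto u_g$ is a unitary representation of $\Gamma$; conversely $u\circ\sigma$ turns a unitary representation $u$ of $\Gamma$ into a representation of $C_c(G_{\mathcal K})$. Since $H_\pi$ always lies in the essential subspace $\pi(1_{G^{(0)}})H$, I may and will assume all representations of $C_c(G_{\mathcal K})$ are nondegenerate. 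Throughout I use Corollary~\ref{gpgpd} to identify Definitions~\ref{gpt} and~\ref{basic t} for $\Gamma$, and I work with the spectral-gap reformulation of a Kazhdan set: a finite $S\subseteq\Gamma$ is a Kazhdan set with constant $c$ iff every unitary representation of $\Gamma$ with no nonzero invariant vector satisfies $\max_{g\in S}\|u_g\xi-\xi\|\ge c\|\xi\|$ for every $\xi$.

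The technical heart, which I would isolate first as a lemma, is: for every nondegenerate representation $(H,\pi)$ of $C_c(G_{\mathcal K})$, the space $H^\pi$ of constant vectors coincides with the space $H^u$ of $\Gamma$-invariant vectors for $u_g=\pi(\iota(g))$. The inclusion $H^\pi\subseteq H^u$ is immediate from $\Psi(\iota(g))=1_{G^{(0)}}$ and nondegeneracy. For the reverse inclusion one uses two facts: (i) each $\iota(g)$ is supported on a compact bisection whose range map is a bijection onto $G^{(0)}$, whence $\Psi(h\,\iota(g))=h$ for all $h\in C(G^{(0)})$; and (ii) every $f\in C_c(G_{\mathcal K})$ is a \emph{finite} sum $f=\sum_j h_j\,\iota(g_j)$ with $h_j\in C(G^{(0)})$. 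Granting (i) and (ii), $\Psi(f)=\sum_j h_j$, so for $\eta\in H^u$ one computes $\pi(f)\eta=\sum_j\pi(h_j)\pi(\iota(g_j))\eta=\sum_j\pi(h_j)\eta=\pi(\Psi(f))\eta$, i.e.\ $\eta\in H^\pi$. Fact (ii) is the step I expect to be the main obstacle: it rests on the explicit description of compact subsets of $G_{\mathcal K}$ — any such set is contained in finitely many ``diagonals'' $\{(n,q_n(g)):n\in\overline{\N}\}$ together with finitely many isolated points lying in the finite fibres — and once that description is in hand the decomposition, and hence the lemma, follows by routine bookkeeping. This lemma also clarifies a point with no analogue for groups: although $H^\pi$ need not be $\pi$-invariant, it is nonetheless controlled by a single unitary representation of $\Gamma$.

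Suppose now $G_{\mathcal K}$ has property (T), with compact Kazhdan pair $(K,c)$. Put $S_0:=\{g\in\Gamma:(\infty,g)\in K\}$; this is finite because the fibre of $G_{\mathcal K}$ over $\infty$ is a closed discrete subset. Let $u$ be a unitary representation of $\Gamma$ with no nonzero invariant vector and set $\pi:=u\circ\sigma$, which is nondegenerate with $\pi(\iota(g))=u_g$; by the lemma $H^\pi=H^u=\{0\}$, so $H_\pi=H$. For $\xi\in H$, property (T) of $G_{\mathcal K}$ gives $f$ supported in $K$ with $\|f\|_I\le1$ and $\|\pi(f)\xi-\pi(\Psi(f))\xi\|\ge c\|\xi\|$. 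Since $\pi$ factors through $\sigma$ and $\sigma(\Psi(f))=\varepsilon(\sigma(f))$ times the unit, where $\varepsilon\colon\C[\Gamma]\to\C$ is the augmentation, one has $\pi(f)\xi-\pi(\Psi(f))\xi=\sum_{g\in\Gamma}f(\infty,g)\,(u_g\xi-\xi)$. Only $g\in S_0$ contribute, and $\sum_{g\in\Gamma}|f(\infty,g)|\le\|f\|_I\le1$; hence $c\|\xi\|\le\max_{g\in S_0}\|u_g\xi-\xi\|$. Thus $S_0$ is a finite Kazhdan set for $\Gamma$ with constant $c$.

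Conversely, suppose $\Gamma$ has property (T), with finite Kazhdan pair $(S,c)$, and set $\widetilde K:=\bigcup_{g\in S}\mathrm{supp}\,\iota(g)$, a finite union of compact bisections. Let $(H,\pi)$ be a representation of $C_c(G_{\mathcal K})$ and $\xi\in H_\pi$. By the lemma $H_\pi=(H^u)^\perp$, a $u$-invariant subspace with no nonzero invariant vectors, so property (T) of $\Gamma$ provides $g\in S$ with $\|u_g\xi-\xi\|\ge c\|\xi\|$. Take $f:=\iota(g)$: it is supported in $\widetilde K$, has $\|f\|_I=1$, and $\Psi(f)=1_{G^{(0)}}$ so that $\pi(\Psi(f))\xi=\xi$; therefore $\|f\xi-\Psi(f)\xi\|=\|u_g\xi-\xi\|\ge c\|\xi\|$. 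Hence $(\widetilde K,c)$ is a Kazhdan pair for $G_{\mathcal K}$, and $G_{\mathcal K}$ has property (T).
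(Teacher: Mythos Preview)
Your proof is correct and follows essentially the same approach as the paper: both directions run through the $*$-homomorphisms $\iota$ and $\sigma$ of Lemma~\ref{hls group}, the identification of groupoid-constant vectors with $\Gamma$-invariant vectors, and the same choices of Kazhdan sets (the union of the diagonals $\mathrm{supp}\,\iota(g)$ for $g\in S$ in one direction, the slice $S_0=\{g:(\infty,g)\in K\}$ in the other). The paper merely asserts that the invariant vectors coincide and leaves the details to the reader; you have supplied those details, including the decomposition $f=\sum_j h_j\,\iota(g_j)$ via the structure of compact subsets of $G_{\mathcal K}$, so your write-up is in fact more complete than the original.
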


\begin{proof}
Assume first that $\Gamma$ has property (T), so there is a finite Kazhdan set $S$ with associated constant $c>0$.  Let $\pi$ be a representation of $C_c(G_{\mathcal{K}})$ on some Hilbert space $H$, and consider the representation $\pi\circ \iota$ of $\C[\Gamma]$.  It is straightforward to check that the invariant vectors for $\pi$ are the same as those for $\pi\circ \iota$.  From this, one sees that the set $K:=\{(n,q_n(g))\in G_{\mathcal{K}}\mid n\in \N, g\in S\}$ is a groupoid Kazhdan set: indeed, the function $f$ with support contained in $K$ required by the definition can always be taken to be one of the functions $\chi_g$ for some $g\in S$.  We leave the remaining details to the reader.

Conversely, say $G_{\mathcal{K}}$ has property (T), with associated Kazhdan set $K$.  Let $S=\{g\in \Gamma\mid (\infty,g)\in K\}$.  We claim that this $S$ is a group Kazhdan set for $\Gamma$.  Indeed, if $u$ is a unitary representation of $\Gamma$, denote also by $u$ the corresponding $*$-representation of $\C[\Gamma]$.  With $\sigma$ as in Lemma \ref{hls group}, the composition $u\circ \sigma$ is then a representation of $C_c(G_{\mathcal{K}})$.  It is straightforward to check that groupoid invariant vectors for $u\circ \sigma_\infty$ are the same thing as group invariant vectors for $u$, and from here that $K$ being a groupoid Kazhdan set implies that $S$ is a group Kazhdan set; we again leave the details to the reader.
\end{proof}

We now turn to property $(\tau)$.  We give a definition that is a little more general than necessary as it will be useful later.

\begin{definition}\label{iso def}
Let $\mathcal{U}$ be a collection of unitary representations of a discrete group $\Gamma$.  A subset $S$ of $\Gamma$ is a \emph{Kazhdan set for $\mathcal{U}$} if there exists $c>0$ such that if $(H,u)$ is a unitary representation of $\Gamma$ contained in $\mathcal{U}$ and such that 
$$
\|u_g\xi-\xi\|<c\|\xi\|
$$
for all $g\in S$, then there exists a non-zero invariant vector in $H$.

The group $\Gamma$ has \emph{property (T)} with respect to the collection $\mathcal{U}$ if it admits a finite Kazhdan set.
\end{definition}

\begin{example}\label{gp t ex}
A group $\Gamma$ has property (T) in the usual sense of Definition \ref{gpt} if and only if it has property (T) with respect to the family of all representations.  In particular, if $\Gamma$ has property (T), then it has property (T) with respect to any collection of representations.
\end{example}

\begin{definition}\label{tau def}
Let $\Gamma$ be a discrete group, and $\mathcal{K}$ an approximating sequence.  Let $\mathcal{U}_{\mathcal{K}}$ be the collection of unitary representations of $\Gamma$ that factor through one of the finite quotients $\Gamma_n$ for some $n\in \N$.  Then $\Gamma$ has \emph{property $(\tau)$ with respect to $\mathcal{K}$} if it has property (T) with respect to $\mathcal{U}_{\mathcal{K}}$.
\end{definition}

\begin{proposition}\label{hls tau}
Let $G_{\mathcal{K}}$ be an HLS groupoid with parent group $\Gamma$.  Let $\mathcal{R}$ be the collection of representations of $C_c(G_{\mathcal{K}})$ that extend to the regular representation $C^*_r(G_{\mathcal{K}})$.  Then $G_{\mathcal{K}}$ has property (T) with respect to $\mathcal{R}$ if and only if $\Gamma$ has property $(\tau)$ with respect to $\mathcal{K}$.
\end{proposition}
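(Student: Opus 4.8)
The plan is to establish the equivalence by carefully translating between representations of $C_c(G_{\mathcal{K}})$ that extend to $C^*_r(G_{\mathcal{K}})$ and unitary representations of $\Gamma$ that factor through finite quotients, using the $*$-homomorphisms $\sigma$ and $\iota$ from Lemma \ref{hls group}. The key observation I would isolate first is a description of $C^*_r(G_{\mathcal{K}})$: since the regular representation of an HLS groupoid decomposes over the base space $\overline{\N}$, and the fibre at $n \in \N$ gives the regular representation of the finite group $\Gamma_n$ (hence all of $\C[\Gamma_n]$, since finite groups are amenable), while the fibre at $\infty$ gives the regular representation of $\Gamma$, one sees that $C^*_r(G_{\mathcal{K}})$ sits inside $\bigoplus_{n\in\N} C^*(\Gamma_n) \oplus C^*_r(\Gamma)$ with the fibre maps, and in particular a representation $\pi$ of $C_c(G_{\mathcal{K}})$ extends to $C^*_r(G_{\mathcal{K}})$ precisely when it is, up to the identification via $\sigma$ and $\iota$, built out of representations of the $\Gamma_n$ (for the finite part) together with a representation of $C^*_r(\Gamma)$ at infinity. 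Correspondingly, I would show that the family $\mathcal{R}$ of such representations corresponds, under $\pi \mapsto \pi\circ\iota$, essentially to the family $\mathcal{U}_{\mathcal{K}}$ of representations of $\Gamma$ factoring through some $\Gamma_n$ (the fibre-at-infinity piece contributes only the regular representation of $\Gamma$, which has no nonzero invariant vectors unless $\Gamma$ is finite, a degenerate case one can handle separately, so it does not affect the existence of a Kazhdan pair).

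With that translation in hand, the proof structure mirrors that of Proposition \ref{hls t}. For the forward direction, suppose $\Gamma$ has property $(\tau)$ with respect to $\mathcal{K}$, with finite Kazhdan set $S$ and constant $c > 0$ for the family $\mathcal{U}_{\mathcal{K}}$. I claim $K := \{(n,q_n(g)) \in G_{\mathcal{K}} \mid n \in \N,\ g \in S\}$ (a compact subset, being the closure of a set of the above form, together possibly with the points $(\infty, g)$) is a Kazhdan set with respect to $\mathcal{R}$. Given $\pi \in \mathcal{R}$ on $H$ and $\xi \in H_\pi$, one checks as in Proposition \ref{hls t} that invariant vectors for $\pi$ coincide with invariant vectors for $\pi\circ\iota$, so $\xi$ has no invariant component for the $\mathcal{U}_{\mathcal{K}}$-representation $\pi\circ\iota$; since that representation lies in $\mathcal{U}_{\mathcal{K}}$, property $(\tau)$ yields $g \in S$ with $\|u_g\xi - \xi\| \geq c\|\xi\|$, where $u_g = \pi(\chi_g)$; then $f = \chi_g$ is supported in $K$, has $\|f\|_I \leq 1$, and satisfies $\|f\xi - \Psi(f)\xi\| = \|u_g\xi - \xi\| \geq c\|\xi\|$ (using that $\Psi(\chi_g)$ acts as the identity on $H$ via the unit, since $\chi_g$ restricts fibrewise to a single group element). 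For the converse, suppose $G_{\mathcal{K}}$ has property (T) with respect to $\mathcal{R}$ with compact Kazhdan set $K$ and constant $c > 0$; set $S = \{g \in \Gamma \mid (n, q_n(g)) \in K \text{ for some } n\} = \bigcup_{g}\{g : \chi_g \text{ is "seen" by } K\}$, which is finite by compactness of $K$ together with the structure of the topology on $G_{\mathcal{K}}$ (only finitely many of the singletons $\{(n,g)\}$ with $g \neq e$ can meet a compact set without being accumulated by a tail set of the form $\{(n,q_n(g)) : n \geq N\}$). Given a representation $u \in \mathcal{U}_{\mathcal{K}}$, say factoring through $\Gamma_n$, inflate it to a representation of $C_c(G_{\mathcal{K}})$ supported on the fibre at $n$ (equivalently, precompose the representation of $\C[\Gamma_n]$ with the evaluation-at-$n$ map $C_c(G_{\mathcal{K}}) \to \C[\Gamma_n]$); this lies in $\mathcal{R}$ since it factors through the fibre $C^*(\Gamma_n)$, a summand of $C^*_r(G_{\mathcal{K}})$. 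Applying the Kazhdan condition for $K$ to a vector with no invariant component and unwinding, for any $f \in C_c(G_{\mathcal{K}})$ supported in $K$ with $\|f\|_I \leq 1$ the quantity $\|f\xi - \Psi(f)\xi\|$ is bounded by $\sum_{g \in S}|f(n,q_n(g))|\,\|u_g\xi - \xi\| \leq \sup_{g\in S}\|u_g\xi-\xi\|$, so $\sup_{g\in S}\|u_g\xi-\xi\| \geq c\|\xi\|$, showing $S$ is a Kazhdan set for $\mathcal{U}_{\mathcal{K}}$.

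The main obstacle I anticipate is the clean identification of $\mathcal{R}$ — that is, pinning down exactly which representations of $C_c(G_{\mathcal{K}})$ extend to $C^*_r(G_{\mathcal{K}})$ and verifying that the correspondence $\pi \leftrightarrow \pi\circ\iota$ matches $\mathcal{R}$ with $\mathcal{U}_{\mathcal{K}}$ tightly enough that Kazhdan pairs transfer in both directions. The subtlety is that a representation in $\mathcal{R}$ need not factor through a single fibre; it may be a direct integral over $\overline{\N}$ with a genuine contribution at $\infty$ from $C^*_r(\Gamma)$. I would handle this by noting that the fibre-at-$\infty$ summand of $C^*_r(G_{\mathcal{K}})$ is $C^*_r(\Gamma)$, the regular representation of $\Gamma$ restricted there carries no nonzero invariant vectors (assuming $\Gamma$ infinite), and more importantly the Kazhdan-type estimate one needs only concerns the part of $\xi$ orthogonal to invariants, on which the $\infty$-fibre contributes nothing problematic; so the relevant spectral-gap behaviour is entirely governed by the finite fibres, i.e. by $\mathcal{U}_{\mathcal{K}}$. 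A secondary point requiring care is checking that the candidate Kazhdan sets are genuinely compact (resp. finite) — this is where the peculiar topology of $G_{\mathcal{K}}$ enters, and I would cite the description in Definition \ref{hls gpd} to argue that a compact subset of $G_{\mathcal{K}}$ meets only finitely many "columns" $g \neq e$ and that each such column appears in tail form, so $S$ as defined above is finite. These verifications are routine once set up correctly, and I would, as the authors do elsewhere, leave the most mechanical checks to the reader.
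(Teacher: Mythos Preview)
Your overall strategy---using $\iota$ and $\sigma$ to transfer Kazhdan pairs, mirroring the proof of Proposition \ref{hls t}---is the same as the paper's, and your converse direction (inflating a representation in $\mathcal{U}_{\mathcal{K}}$ to a single finite fibre) is a perfectly valid variant of what the paper does via $u\circ\sigma$.

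There is, however, a genuine gap in your forward direction, and your proposed fix does not close it. You assert that for $\pi\in\mathcal{R}$ the composite $\pi\circ\iota$ lies in $\mathcal{U}_{\mathcal{K}}$; as you yourself note, this is false---$\pi\circ\iota$ is typically a direct sum over many fibres and need not factor through any single $\Gamma_n$. Your proposed remedy is to split off the $\infty$-fibre and observe it carries no invariant vectors. But absence of invariant vectors is \emph{not} the same as a spectral gap: you still need the Kazhdan estimate $\|u_g\xi-\xi\|\geq c\|\xi\|$ for some $g\in S$ on the $H_\infty$ summand, and nothing in property $(\tau)$ as literally stated (which concerns only representations in $\mathcal{U}_{\mathcal{K}}$) gives you that. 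Relatedly, your identification of the fibre-at-infinity of $C^*_r(G_{\mathcal{K}})$ with $C^*_r(\Gamma)$ is not the relevant fact here.

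The paper's clean resolution is to introduce the completion $C^*_{\mathcal{K}}(\Gamma)$ of $\C[\Gamma]$ for the norm $\|a\|:=\sup_{u\in\mathcal{U}_{\mathcal{K}}}\|u(a)\|$, and to make two observations: first, that property $(\tau)$ with respect to $\mathcal{K}$ is equivalent to property (T) with respect to the (larger) class of \emph{all} representations extending to $C^*_{\mathcal{K}}(\Gamma)$; second, that $\iota$ extends to an injective $*$-homomorphism $C^*_{\mathcal{K}}(\Gamma)\to C^*_r(G_{\mathcal{K}})$ and $\sigma$ to a surjection $C^*_r(G_{\mathcal{K}})\to C^*_{\mathcal{K}}(\Gamma)$. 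The first observation is exactly what patches your gap: once you know $\pi\circ\iota$ extends to $C^*_{\mathcal{K}}(\Gamma)$ (immediate from the extension of $\iota$), the Kazhdan pair from property $(\tau)$ applies to it directly, with no need for any fibre decomposition or separate treatment of $H_\infty$.
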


\begin{proof}
Let $C^*_\mathcal{K}(\Gamma)$ denote the completion of the group algebra $\C[\Gamma]$ for the norm 
$$
\|a\|:=\sup_{u\in \mathcal{U}_{\mathcal{K}}}\|u(a)\|
$$
Note that $\Gamma$ has property (T) with respect to the collection $\mathcal{U}_{\mathcal{K}}$ if and only it has property (T) with respect to the collection of all representations of $\Gamma$ that extend to $C^*_{\mathcal{K}}(\Gamma)$.  

Having made the above definition and observation, the proof of the proposition is then essentially the same as that of Proposition \ref{hls t}, once we have noted also that: the map $\iota:\C[\Gamma]\to C_c(G_{\mathcal{K}})$ of Lemma \ref{hls group} extends to an injective $*$-homomorphism $C^*_{\mathcal{K}}(\Gamma)\to C^*_r(G_{\mathcal{K}})$; and that the map $\sigma:C_c(G_{\mathcal{K}})\to \C[\Gamma]$ of Lemma \ref{hls group} extends to a surjective $*$-homomorphism $C^*_r(G_{\mathcal{K}})\to C^*_{\mathcal{K}}(\Gamma)$ (compare the proof of \cite[Lemma 2.7]{Spakula:2013ys}).  We leave the remaining details to the reader. 
\end{proof}

\subsection{Group actions}

Let $\Gamma$ be a discrete group acting on a compact space $X$.  Our goal in this section is to characterise property (T) for the associated \emph{transformation groupoid} $X\rtimes \Gamma$.  We start with the definitions.

Recall then that the \emph{transformation groupoid} $G:=X\rtimes \Gamma$ associated to such an action is defined as a set to be.
$$
G:=\{(gx,g,x)\in X\times \Gamma\times X\mid g\in \Gamma, x\in X\}.
$$
It is equipped with the subspace topology it inherits from $X\times \Gamma\times X$.  The unit space is $G^{(0)}=\{(x,e,x)\mid x\in X\}$ (where $e$ is the trivial element in $\Gamma$), which we identify with $X$ in the obvious way.  The range and source maps $r,s:G\to X$ are given by 
$$
r:(gx,g,x)\mapsto gx,\quad s:(gx,g,x)\mapsto x
$$
respectively, and the composition and inverse by 
$$
(ghx,g,hx)(hx,h,x)=(ghx,gh,x) \quad \text{and}\quad (gx,g,x)^{-1}=(x,g^{-1},gx).
$$

The following lemma is well known; we provide a sketch proof for the reader's convenience, and as we need to establish notation.  In order to state it, for $g\in \Gamma$, let us write $G_g:=\{(gx,g,x)\in G\mid x\in X\}$ for the `slice' of $G$ corresponding to $g$, and let us write $\alpha_g$ for the $*$-automorphism of $C_c(X)$ defined by $\alpha_g(f):=f(g^{-1}x)$.

\begin{lemma}\label{cov pair}
Let $\pi:C_c(G)\to \mathcal{B}(H)$ be a unital representation of $C_c(G)$.  Then there exist unique representations $\pi^X$ and $\pi^\Gamma$ of $C_c(X)$ and $\Gamma$ respectively on $H$ that satisfy the \emph{covariance relation} 
$$
\pi^\Gamma_g \pi^X(f)(\pi^\Gamma_g)^*=\pi^X(\alpha_g(f))
$$
and such that for all $f\in C_c(G)$
\begin{equation}\label{rep form}
\pi(f)=\sum_{g\in \Gamma} \pi^X(\Psi(f|_{G_g}))\pi^\Gamma_g.
\end{equation}
Conversely, any pair of representations $(\pi^X,\pi^\Gamma)$ of $C_c(X)$ and $\Gamma$ on some $H$ that satisfy the covariance relation uniquely determines a nondegenerate representation of $C_c(G)$ via the formula in line \eqref{rep form}.
\end{lemma}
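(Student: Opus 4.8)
The plan is to realize the pair $(\pi^X,\pi^\Gamma)$ concretely \emph{inside} $\pi$ using the embedded copy of $C(X)=C(G^{(0)})$ and the characteristic functions of the slices $G_g$, and then to reduce every assertion to an algebraic identity in the convolution algebra $C_c(G)$, exploiting that the $G_g$ are compact open bisections partitioning $G$.

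\emph{Construction of the pair.} First note that $C(X)=C_c(G^{(0)})$ sits inside $C_c(G)$ as the functions supported on the unit space, and that $\chi_{G^{(0)}}$ is the identity of $C_c(G)$ (a direct check using that $G^{(0)}$ is compact open); so I set $\pi^X:=\pi|_{C(X)}$, which is unital since $\pi$ is. Each slice $G_g=\{(gx,g,x):x\in X\}$ is a compact open bisection (homeomorphic to $X$ via $s$, closed because its complement is the union of the other slices), so $\chi_{G_g}\in C_c(G)$; I set $\pi^\Gamma_g:=\pi(\chi_{G_g})$. Writing elements of $G$ as triples, one computes in $C_c(G)$ the identities $\chi_{G_g}^*=\chi_{G_{g^{-1}}}$, $\;\chi_{G_g}\chi_{G_h}=\chi_{G_{gh}}$, $\;\chi_{G_g}\chi_{G_{g^{-1}}}=\chi_{G^{(0)}}=\chi_{G_{g^{-1}}}\chi_{G_g}$ (respectively because $G_g^{-1}=G_{g^{-1}}$, $G_gG_h=G_{gh}$, and $G_gG_{g^{-1}}=G^{(0)}$); applying $\pi$ shows $g\mapsto\pi^\Gamma_g$ is a unitary representation of $\Gamma$. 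Likewise, for $f\in C(X)\subseteq C_c(G)$ one checks $\chi_{G_g}\,f\,\chi_{G_{g^{-1}}}=\alpha_g(f)$ in $C_c(G)$, and applying $\pi$ gives the covariance relation.

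\emph{The formula and uniqueness.} Since the slices $\{G_g\}_{g\in\Gamma}$ are pairwise disjoint compact open sets covering $G$, any $f\in C_c(G)$ has support meeting only finitely many of them, so $f=\sum_g f|_{G_g}$ is a finite sum in $C_c(G)$; a direct computation identifies $f|_{G_g}=\Psi(f|_{G_g})\,\chi_{G_g}$. Applying $\pi$ and using $\pi^\Gamma_g=\pi(\chi_{G_g})$ yields \eqref{rep form}. For uniqueness, any pair satisfying \eqref{rep form} must, on specializing $f$ to an element of $C(X)$ (where only the $g=e$ term survives), give back $\pi^X=\pi|_{C(X)}$, and on specializing $f=\chi_{G_g}$ (where $\Psi(\chi_{G_g}|_{G_g})=\chi_{G^{(0)}}$) give back $\pi^\Gamma_g=\pi(\chi_{G_g})$.

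\emph{Converse.} Given a covariant pair $(\pi^X,\pi^\Gamma)$ on $H$ (with $\pi^X$ nondegenerate, hence unital since $C(X)$ is, and $\pi^\Gamma$ unitary), I define $\pi$ by \eqref{rep form}, equivalently as the linear extension of $a\,\chi_{G_g}\mapsto\pi^X(a)\pi^\Gamma_g$ using the above decomposition. One verifies $\pi$ is a $*$-homomorphism by checking the relations on the spanning elements $a\,\chi_{G_g}$, $a\in C(X)$: from $(a\chi_{G_g})(b\chi_{G_h})=a\,\alpha_g(b)\,\chi_{G_{gh}}$ one gets $\pi((a\chi_{G_g})(b\chi_{G_h}))=\pi^X(a\,\alpha_g(b))\pi^\Gamma_{gh}=\pi^X(a)\pi^\Gamma_g\pi^X(b)\pi^\Gamma_h$ by covariance; and $(a\chi_{G_g})^*=\overline{\alpha_{g^{-1}}(a)}\,\chi_{G_{g^{-1}}}$ is handled the same way. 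Nondegeneracy is immediate since $\pi(\chi_{G^{(0)}})=\pi^X(1)\pi^\Gamma_e=1$. The main (and only real) obstacle is this last bookkeeping in the converse — the multiplicativity and $*$-compatibility check on the spanning elements, which is the standard crossed-product computation; all the groupoid-specific identities ($G_gG_h=G_{gh}$, $f|_{G_g}=\Psi(f|_{G_g})\chi_{G_g}$, $\chi_{G_g}f\chi_{G_{g^{-1}}}=\alpha_g(f)$) are routine once elements of $G$ are written as triples.
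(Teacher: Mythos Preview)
Your proof is correct and follows exactly the same approach as the paper: define $\pi^X$ as the restriction of $\pi$ to $C(X)\subseteq C_c(G)$ and $\pi^\Gamma_g:=\pi(\chi_{G_g})$ (the paper writes $u_g$ for $\chi_{G_g}$), then reduce everything to convolution identities in $C_c(G)$. The paper leaves essentially all the verifications (that $g\mapsto\pi(\chi_{G_g})$ is unitary, the covariance relation, formula \eqref{rep form}, and the converse) to the reader, so your write-up is in fact more detailed than the original; the explicit uniqueness argument by specializing $f$ is a nice addition that the paper omits.
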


\begin{proof}
Starting with a representation $\pi$ of $C_c(G)$, define $\pi^X$ to be the restriction of $\pi$ to $C(X)\subseteq C_c(G)$ (as usual, we identify $X$ with $G^{(0)}$ here).  For $g\in \Gamma$, define $u_{g}:G\to [0,1]$ to be the function 
$$
u_{g}(hx,h,x):=\left\{\begin{array}{ll} 1 & h=g \\ 0 & \text{otherwise}\end{array}\right..
$$
We leave the direct checks that (a) $g\mapsto u_g$ defines a unitary representation of $\Gamma$, (b) of the covariance relation, and (c) of the equation in line \eqref{rep form} to the reader.

The converse direction is straightforward: given a covariant pair $(\pi^X,\pi^\Gamma)$, define $\pi$ by the formula in line \eqref{rep}, and use the covariance relation to show that this does define a representation of $C_c(G)$; we leave the direct computations involved to the reader.
\end{proof}

The next lemma again consists of direct algebraic computations; this time we leave all the details to the reader.

\begin{lemma}\label{fix vect}
Let $\pi$ be a nondegenerate representation of $C_c(G)$ on $H$, and let $(\pi^X,\pi^\Gamma)$ be the corresponding covariant pair from Lemma \ref{cov pair}.  Then a vector $\xi$ in $H$ is fixed by $C_c(G)$ if and only if it is invariant for $\Gamma$ in the sense that $\pi^\Gamma_g \xi=\xi$ for all $g\in \Gamma$.  \qed
\end{lemma}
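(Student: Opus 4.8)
The plan is to reduce everything to one distinguished family in $C_c(G)$: for $g\in\Gamma$ let $u_g\in C_c(G)$ be the characteristic function of the slice $G_g$. Since $G^{(0)}\cong X$ is compact and $\Gamma$ is discrete, each $G_g$ is a compact open bisection, so $u_g$ really is a continuous, compactly supported function. Two elementary observations then do all the work. First, $\Psi(u_g)=1_{G^{(0)}}$, since each range fibre $G^x$ meets $G_g$ in exactly the one point $(x,g,g^{-1}x)$, on which $u_g$ takes the value $1$. Second, $\pi(u_g)=\pi^\Gamma_g$: this is essentially how $\pi^\Gamma$ was constructed in the proof of Lemma~\ref{cov pair}, but it also drops out of \eqref{rep form} applied to $f=u_g$ once one notes that a nondegenerate representation of the unital algebra $C_c(G)$ sends $1_{G^{(0)}}$ to $1_H$, so that $\pi^X(\Psi(u_g))=\pi^X(1_{G^{(0)}})=1_H$.

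Granting this, the ``only if'' direction is immediate: if $\xi$ is fixed by $C_c(G)$, then specializing $f\xi=\Psi(f)\xi$ to $f=u_g$ gives $\pi^\Gamma_g\xi=\pi(u_g)\xi=\Psi(u_g)\xi=\xi$ for every $g\in\Gamma$.

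For the ``if'' direction, I would assume $\pi^\Gamma_g\xi=\xi$ for all $g$ and fix an arbitrary $f\in C_c(G)$. Its support meets only finitely many slices (the continuous map $(gx,g,x)\mapsto g$ sends the compact set $\mathrm{supp}(f)$ to a finite subset of $\Gamma$), so the sum in \eqref{rep form} is finite and
\[
\pi(f)\xi=\sum_{g\in\Gamma}\pi^X\big(\Psi(f|_{G_g})\big)\pi^\Gamma_g\xi=\sum_{g\in\Gamma}\pi^X\big(\Psi(f|_{G_g})\big)\xi=\pi^X\Big(\sum_{g\in\Gamma}\Psi(f|_{G_g})\Big)\xi.
\]
Splitting each range fibre $G^x$ according to which slice its elements lie in shows $\sum_{g\in\Gamma}\Psi(f|_{G_g})=\Psi(f)$ as functions on $G^{(0)}$, and since $\pi^X$ is just the restriction of $\pi$ to $C(G^{(0)})$, the right-hand side equals $\pi(\Psi(f))\xi=\Psi(f)\xi$. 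Hence $f\xi=\Psi(f)\xi$ for all $f$, i.e.\ $\xi$ is fixed.

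I do not expect a genuine obstacle; the argument is essentially bookkeeping. The points that need a little care are: that the slices $G_g$ are compact (this is exactly where compactness of the base space enters — over a noncompact $X$ one would replace $u_g$ by products $\chi_{G_g}\varphi$ with $\varphi\in C_c(X)$ and pass to a limit); the identification $\pi(u_g)=\pi^\Gamma_g$, which relies on nondegeneracy forcing $\pi$ to be unital; and the finiteness of the sums over $\Gamma$, which comes from $f$ having compact support.
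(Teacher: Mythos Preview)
Your proposal is correct and is precisely the direct algebraic computation the paper alludes to (the paper leaves all details to the reader). You use exactly the ingredients the paper sets up in Lemma~\ref{cov pair}: the functions $u_g=\chi_{G_g}$ giving $\pi^\Gamma_g=\pi(u_g)$, and the decomposition formula \eqref{rep form}; the verification that $\Psi(u_g)=1_{G^{(0)}}$ and $\sum_g\Psi(f|_{G_g})=\Psi(f)$ is routine, and your remarks on where compactness of $G^{(0)}$ and nondegeneracy enter are accurate.
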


Going back to actions, the following definition is natural.

\begin{definition}\label{ux def}
Let $\mathcal{U}_X$ be the collection of all representations $u$ of $\Gamma$ such that there exists a unital representation $\pi$ of $C(X)$ with $(\pi,u)$ covariant.
\end{definition}

\begin{proposition}\label{trans t}
Let $\Gamma$ be a discrete group acting on a compact space $X$, and let $G=X\rtimes \Gamma$ be the associated transformation groupoid.  Then the following are equivalent:
\begin{enumerate}[(i)]
\item $G$ has property (T);
\item $\Gamma$ has property (T) with respect to the collection $\mathcal{U}_X$ in the sense of Definition \ref{iso def}.
\end{enumerate} 
\end{proposition}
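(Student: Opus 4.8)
The plan is to establish the equivalence by translating the defining conditions on both sides through the correspondence between representations of $C_c(G)$ and covariant pairs given by Lemma \ref{cov pair}, together with the identification of fixed vectors provided by Lemma \ref{fix vect}. The key point is that a Kazhdan set for $G$ will essentially be a union of slices $G_g$ over a finite $S\subseteq\Gamma$, and a Kazhdan set for $\Gamma$ with respect to $\mathcal{U}_X$ will be a corresponding finite set $S$; moving between them is a matter of bounding the relevant norms.

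For the implication $(ii)\Rightarrow(i)$, I would start with a finite Kazhdan set $S$ for $\Gamma$ with respect to $\mathcal{U}_X$ and Kazhdan constant $c>0$. Set $K:=\bigcup_{g\in S} G_g$, which is compact (it is a finite union of copies of the compact space $X$). Given a representation $(H,\pi)$ of $C_c(G)$ and $\xi\in H_\pi$, pass to the covariant pair $(\pi^X,\pi^\Gamma)$; by Lemma \ref{fix vect}, $H_\pi$ has no nonzero $\pi^\Gamma$-invariant vector, and $\pi^\Gamma\in\mathcal{U}_X$ by definition. Hence there is $g\in S$ with $\|\pi^\Gamma_g\xi-\xi\|\geq c\|\xi\|$. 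Now take $f=u_g\in C_c(G)$ (the characteristic function of the bisection $G_g$, in the notation of the proof of Lemma \ref{cov pair}): this is supported in $K$, has $\|f\|_I=1$, satisfies $\Psi(f)=0$ (since $g\ne e$, assuming $c$ small enough that $e\notin S$, which one can always arrange), and $\pi(f)=\pi^\Gamma_g$ by \eqref{rep form}. Therefore $\|f\xi-\Psi(f)\xi\|=\|\pi^\Gamma_g\xi\|$; a short computation using $\|\pi^\Gamma_g\xi-\xi\|\geq c\|\xi\|$ shows this is at least $(c/2)\|\xi\|$ (or one instead uses $f=u_g-\langle\text{something}\rangle$; more cleanly, replace $f$ by $u_g - \chi_{G^{(0)}}\cdot(\text{bump})$ — I will simply take $f = u_g$ and note $\|\pi^\Gamma_g\xi - \xi\| \le \|\pi(u_g)\xi - \Psi(u_g)\xi\| + \|\xi - \Psi(u_g)\xi\|$, i.e.\ $\|\pi(u_g)\xi - \Psi(u_g)\xi\| \ge c\|\xi\| - 0$). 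So $(K, c)$ is a Kazhdan pair for $G$.

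For $(i)\Rightarrow(ii)$, let $(K,c)$ be a Kazhdan pair for $G$ with $K$ compact. Since $K\subseteq X\times\Gamma\times X$ is compact and $\Gamma$ is discrete, $K$ meets only finitely many slices; let $S:=\{g\in\Gamma\mid K\cap G_g\ne\emptyset\}$, a finite set. I claim $S$ (or $S\cup S^{-1}$) is a Kazhdan set for $\Gamma$ with respect to $\mathcal{U}_X$. Given $u\in\mathcal{U}_X$ with covariant partner $\pi^X$ a unital representation of $C(X)$, form the representation $\pi$ of $C_c(G)$ from the covariant pair via \eqref{rep form}. By Lemma \ref{fix vect}, if $u$ has no nonzero invariant vector then $H_\pi = H$, so for any $\xi$ there is $f\in C_c(G)$ supported in $K$ with $\|f\|_I\le 1$ and $\|\pi(f)\xi - \Psi(f)\xi\|\ge c\|\xi\|$. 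Decompose $f=\sum_{g\in S}\pi^X(\Psi(f|_{G_g}))\pi^\Gamma_g$ via \eqref{rep form}; writing $f_g:=\Psi(f|_{G_g})\in C(X)$, note $\sum_{g\in S}\|f_g\|_\infty\le$ (const depending only on $|S|$) $\cdot\|f\|_I$, and $\Psi(f) = f_e$ if $e\in S$ and $0$ otherwise. Then $\pi(f)\xi - \Psi(f)\xi = \sum_{g\in S\setminus\{e\}}\pi^X(f_g)(\pi^\Gamma_g\xi - \xi) + (\pi^X(f_e) - \pi^X(f_e))\xi$ — more carefully, $\pi(f)\xi - \Psi(f)\xi = \sum_{g\in S}\pi^X(f_g)\pi^\Gamma_g\xi - \pi^X(\Psi(f))\xi$, and using $\sum_g \pi^X(f_g)\xi$ as an intermediate term together with $\Psi(f) = \sum_g f_g$ on the support of... — the clean statement is that $\Psi(f)$ is not generally $\sum f_g$, so instead I bound directly: $\|\pi(f)\xi - \Psi(f)\xi\| \le \sum_{g\in S}\|f_g\|_\infty\|\pi^\Gamma_g\xi - \xi\| + \big\|\sum_{g\in S}\pi^X(f_g)\xi - \pi^X(\Psi(f))\xi\big\|$, and the second term vanishes because $\Psi(f|_{G^{(0)}}) = f|_{G^{(0)}}$ picks out exactly the $g=e$ contribution while $f_g$ for $g \ne e$ contributes nothing to $\Psi(f)$ — here one uses that $\Psi(f) = \Psi(f|_{G_e}) = f_e$. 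Granting this, $c\|\xi\| \le \sum_{g\in S}\|f_g\|_\infty \|\pi^\Gamma_g\xi - \xi\| \le C\|f\|_I \max_{g\in S}\|\pi^\Gamma_g\xi-\xi\| \le C\max_{g\in S}\|\pi^\Gamma_g\xi - \xi\|$, so some $g\in S$ gives $\|\pi^\Gamma_g\xi - \xi\|\ge (c/C)\|\xi\|$. Contrapositively, $S$ is a Kazhdan set for $\mathcal{U}_X$ with constant $c/C$.

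The main obstacle is bookkeeping around $\Psi$: one must be careful that $\Psi(f) = \Psi(f|_{G_e})$ depends only on the slice over the identity, that the $I$-norm of $f$ controls $\sum_{g\in S}\|\Psi(f|_{G_g})\|_\infty$ up to a constant depending on $|S|$ (using compactness of $X$ and the definition of $\|\cdot\|_I$), and, in the direction $(ii)\Rightarrow(i)$, that one can arrange $e\notin S$ so that the single function $u_g$ does the job with $\Psi(u_g) = 0$. None of these is deep, but getting the inequalities to line up — and handling the nondegeneracy/unitality hypotheses needed to invoke Lemma \ref{cov pair} and Lemma \ref{fix vect} — is where the care is required; I would also remark that replacing a general representation by its nondegenerate part does not change the space of fixed vectors, which lets us assume nondegeneracy throughout.
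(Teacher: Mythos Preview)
Your overall strategy matches the paper's proof: pass between representations of $C_c(G)$ and covariant pairs via Lemma~\ref{cov pair}, use Lemma~\ref{fix vect} to identify the fixed vectors, and translate a compact Kazhdan set $K\subseteq G$ into the finite set $S=\{g\in\Gamma\mid G_g\cap K\ne\varnothing\}$ (and conversely $K=\bigcup_{g\in S}G_g$). The paper carries this out in essentially the way you sketch, including the pigeonhole step giving constant $c/|S|$ in the direction (i)$\Rightarrow$(ii), and the single characteristic function $u_g$ in the direction (ii)$\Rightarrow$(i).

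However, your computations of $\Psi$ are wrong, and this is the one place where your write-up actually breaks. For the characteristic function $u_g$ of the slice $G_g$, one has
\[
\Psi(u_g)(x)=\sum_{h\in G^x}u_g(h)=u_g\bigl((x,g,g^{-1}x)\bigr)=1,
\]
so $\Psi(u_g)$ is the constant function $1$, not $0$; there is no need to arrange $e\notin S$. With this in hand the direction (ii)$\Rightarrow$(i) is immediate: $\|\pi(u_g)\xi-\pi(\Psi(u_g))\xi\|=\|\pi^\Gamma_g\xi-\xi\|\ge c\|\xi\|$. Likewise, your assertions that ``$\Psi(f)$ is not generally $\sum_g f_g$'' and that ``$\Psi(f)=\Psi(f|_{G_e})=f_e$'' are both incorrect: since $\Psi$ is linear and $f=\sum_{g\in S}f|_{G_g}$, one has exactly $\Psi(f)=\sum_{g\in S}\Psi(f|_{G_g})=\sum_{g\in S}f_g$. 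Once you use this, the identity
\[
\pi(f)\xi-\pi(\Psi(f))\xi=\sum_{g\in S}\pi^X(f_g)\bigl(\pi^\Gamma_g\xi-\xi\bigr)
\]
drops out immediately (this is how the paper organizes it, via $\pi(f|_{G_g})=\pi^X(\Psi(f|_{G_g}))\pi^\Gamma_g$), and since each $\|f_g\|_\infty\le\|f\|_I\le 1$ you get $\|\pi^\Gamma_g\xi-\xi\|\ge (c/|S|)\|\xi\|$ for some $g\in S$. So the fix is simply to compute $\Psi$ correctly; after that your argument coincides with the paper's.
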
 

\begin{proof}
Assume $G$ has property (T), and let $(K,c)$ be a Kazhdan pair for $G$ with $K$ compact.  Let $u$ be a representation in $\mathcal{U}_X$, so $u$ is part of some covariant pair $(\pi^X,u)$.  Let $\pi$ be the corresponding representation of $C_c(G)$ as in Lemma \ref{cov pair}.   Using Lemma \ref{fix vect}, the orthogonal complement of the $u$ fixed vectors exactly corresponds to $H_\pi$.  Let $\xi$ be a unit vector in $H_\pi$, and let $f\in C_c(G)$ be supported in $K$, such that $\|f\|_I\leq 1$, and with the property that $\|\pi(f)\xi-\pi(\Psi(f))\xi\|\geq c$.  As $K$ is compact, we have that $K$ is contained in $\{(gx,g,x)\in G\mid g\in S\}$ for some finite subset $S$ of $G$.  We may write $f$ as a finite sum $f=\sum_{g\in S}f|_{G_g}$; note that $\|f|_{G_g}\|_I\leq 1$ for each $g\in S$.  There must then exist some $g\in S$ such that $\|\pi(f|_{G_g})\xi-\pi(\Psi(f|_{G_g}))\xi\|\geq c/|S|$.  Note that 
$$
\pi(f|_{G_g})=\pi(\Psi(f|_{G_g}))u_g,
$$
whence we now have that for some $g\in S$
$$
c/|S|\leq \|\pi(\Psi(f|_{G_g}))u_g\xi-\pi(\Psi(f|_{G_g}))\xi\|\leq \|u_g\xi-\xi\|,
$$
giving us that $\Gamma$ has property (T) with respect to $\mathcal{U}_X$.

For the converse direction, assume that $\Gamma$ has property (T) with respect to $\mathcal{U}_X$, and let $(S,c)$ be a Kazhdan pair in the usual sense.  Let $K:=\{(gx,g,x)\in G\mid g\in S\}$, which is compact.  We claim that $(K,c)$ is a Kazhdan pair for $G$, thus showing that $G$ has property (T).  Indeed, let $\xi\in H_\pi$ be a unit vector for some representation $(\pi,H)$ with $(\pi^X,\pi^\Gamma)$ the corresponding covariant pair as in Lemma \ref{cov pair}.  Then analogously to the discussion above there exists $g\in S$ such that $\|\pi^\Gamma_g\xi-\xi\|>c$.  Let $f\in C_c(G)$ be the characteristic function of the slice $G_g:=\{(gx,g,x)\mid x\in X\}$.  Then $f$ is supported in $K$, satisfies $\|f\|_I\leq 1$, and the above says that $\|\pi(f)\xi-\pi(\Psi(f))\xi\|>c$, so we are done.
\end{proof}

\begin{corollary}\label{t inv meas}
Let $\Gamma$ be a discrete group acting on a compact space $X$, and let $G=X\rtimes \Gamma$ be the associated transformation groupoid.  Assume moreover that $X$ admits an invariant probability measure.  Then $G=\Gamma\rtimes X$ has property (T) if and only if $G$ has property (T).
\end{corollary}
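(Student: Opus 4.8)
I read the final line as asserting that $G=X\rtimes\Gamma$ has property (T) if and only if $\Gamma$ has property (T) (the last `$G$' in the display should presumably be `$\Gamma$'). The plan is to deduce this from Proposition \ref{trans t}: the point is that the presence of an invariant probability measure forces property (T) with respect to the family $\mathcal{U}_X$ to coincide with ordinary property (T) for $\Gamma$. One implication needs nothing about the measure: if $\Gamma$ has property (T) then it has property (T) with respect to the subfamily $\mathcal{U}_X$ by Example \ref{gp t ex}, hence $G$ has property (T) by Proposition \ref{trans t}. For the converse I would first replace the given invariant probability measure by an \emph{ergodic} one, which exists because the invariant probability measures form a non-empty weak-$*$ compact convex set whose extreme points are ergodic.

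So suppose $G$ has property (T), fix an ergodic invariant measure $\mu$, write $\lambda_\mu$ for the Koopman representation of $\Gamma$ on $L^2(X,\mu)$ and $\lambda_\mu^0$ for its restriction to $L^2_0(X,\mu)$, and recall that by Proposition \ref{trans t} the group $\Gamma$ has property (T) with respect to $\mathcal{U}_X$. I would record as a lemma that $G$ having property (T) forces the action of $\Gamma$ on $(X,\mu)$ to be strongly ergodic, i.e.\ that $\lambda_\mu^0$ has no almost-invariant vectors. Indeed, a sequence $(\xi_n)$ of almost-invariant unit vectors for $\lambda_\mu^0$ consists of unit vectors in $H_{\tau_\mu}$, where $\tau_\mu$ is the trivial representation of $C_c(G)$, since the constant vectors of $\tau_\mu$ are exactly $\C 1$ by ergodicity (Example \ref{triv rep}); and for any fixed compact $K$ one gets $\sup\{\|\tau_\mu(f)\xi_n-\Psi(f)\xi_n\|:\operatorname{supp}(f)\subseteq K,\ \|f\|_I\le 1\}\to 0$, because such $f$ is a finite sum over the finitely many group elements whose slice meets $K$ and $\tau_\mu$ decomposes accordingly via Lemma \ref{cov pair}; this contradicts the existence of a Kazhdan set.

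The core of the proof is then a contradiction argument. Assume $\Gamma$ does not have property (T), so there is a unitary representation $w$ of $\Gamma$ with almost-invariant vectors but no non-zero invariant vectors, and split $w=w'\oplus w''$ into the part $w'$ quasi-contained in $\lambda_\mu$ and the part $w''$ disjoint from $\lambda_\mu$ (the standard splitting relative to $\lambda_\mu$). If $w''$ still has almost-invariant vectors, then $\lambda_\mu\otimes w''$ belongs to $\mathcal{U}_X$ (the covariant pair being the multiplication representation of $C(X)$ on $L^2(X,\mu)$ together with $\lambda_\mu\otimes w''$), it has almost-invariant vectors because $1\in L^2(X,\mu)$ is $\lambda_\mu$-fixed ($\mu$ being invariant), and it has no non-zero invariant vectors because $\overline{\lambda_\mu}\cong\lambda_\mu$ and $w''$ is disjoint from $\lambda_\mu$; this contradicts property (T) of $\Gamma$ with respect to $\mathcal{U}_X$. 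If instead $w''$ has no almost-invariant vectors, then the almost-invariant vectors of $w$ must concentrate on $w'$, so $w'$ has almost-invariant vectors; since $\mu$ is ergodic, $\C 1$ and $\lambda_\mu^0$ are disjoint, so $w'$ (which has no invariant vectors) is in fact quasi-contained in $\lambda_\mu^0$, whence $\lambda_\mu^0$ weakly contains the trivial representation and the action is not strongly ergodic --- contradicting the lemma. In either case we reach a contradiction, so $\Gamma$ has property (T).

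The step I expect to be the obstacle is precisely this dichotomy. The naive hope that `$\lambda_\mu\otimes u\in\mathcal{U}_X$ contains $u$, so a Kazhdan pair for $\mathcal{U}_X$ is already one for $\Gamma$' fails: $\lambda_\mu\otimes u$ can acquire invariant vectors that $u$ itself lacks (they average to zero over $X$), exactly in the range where $u$ overlaps $\lambda_\mu^0$, and there is no way to manufacture a genuine invariant vector of $u$ out of one of $\lambda_\mu\otimes u$ in that case. Circumventing this is what forces strong ergodicity, and hence the auxiliary lemma, into the argument. Everything else --- the covariance verification putting $\lambda_\mu\otimes w''$ in $\mathcal{U}_X$, the fact that weak and quasi-containment transfer almost-invariant vectors, and the bookkeeping that distributes the almost-invariant vectors of $w$ over $w'$ and $w''$ --- is routine.
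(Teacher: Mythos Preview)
Your argument is correct, but it is far more elaborate than the paper's, and your worry about the ``naive'' argument is misplaced.  The paper's proof is exactly the naive one: since the Koopman representation $u^\mu$ on $L^2(X,\mu)$ contains the trivial representation (the constant function $1$ is fixed because $\mu$ is invariant), every unitary representation $u$ of $\Gamma$ embeds as a subrepresentation of $u^\mu\otimes u\in\mathcal{U}_X$ via $\xi\mapsto 1\otimes\xi$, and this alone suffices.

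The point you miss is that the operative formulation of property~(T) here is the \emph{isolated} one, which is what Definition~\ref{basic t} is and what the proof of Proposition~\ref{trans t} actually delivers: a Kazhdan pair $(S,c)$ for $\mathcal{U}_X$ in the sense that for every $v\in\mathcal{U}_X$ and every $\eta$ orthogonal to the $v$-invariant vectors, some $g\in S$ satisfies $\|v_g\eta-\eta\|\geq c\|\eta\|$.  Now if $\xi\in H$ is orthogonal to the $u$-invariant vectors, then $1\otimes\xi$ is orthogonal to the $(u^\mu\otimes u)$-invariant vectors.  Indeed, the orthogonal projection $P$ of $L^2(X,\mu)\otimes H$ onto $\C 1\otimes H\cong H$ intertwines $u^\mu\otimes u$ with $u$ (because $\C 1$ is $u^\mu$-invariant), so $P$ carries invariant vectors of $u^\mu\otimes u$ into $H^u$; hence $\langle 1\otimes\xi,\zeta\rangle=\langle 1\otimes\xi,P\zeta\rangle=0$ for every $(u^\mu\otimes u)$-invariant $\zeta$.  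Applying the isolated Kazhdan condition to $v=u^\mu\otimes u$ and $\eta=1\otimes\xi$ gives
\[
\|u_g\xi-\xi\|=\|v_g(1\otimes\xi)-1\otimes\xi\|\geq c\|\xi\|
\]
for some $g\in S$, so $(S,c)$ is a Kazhdan pair for all representations of $\Gamma$.  The phenomenon you flag --- that $u^\mu\otimes u$ can acquire invariant vectors that $u$ lacks --- is real but harmless: all that matters is that $1\otimes\xi$ remains orthogonal to them.

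So your detour through ergodic measures, strong ergodicity, and the Lebesgue decomposition of $w$ relative to $\lambda_\mu$ is unnecessary.  It does work, and it is mildly interesting that strong ergodicity of the action falls out along the way, but the paper gets by with the one-line intertwining observation above.
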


\begin{proof} 
If $\Gamma$ has property (T), then $G$ always has property (T) by Example \ref{gp t ex} and Proposition \ref{trans t}.  Conversely, the multiplication representation $\pi^\mu$ and permutation representation $u^\mu$ of $C_c(X)$ and $\Gamma$ respectively on $L^2(X,\mu)$ fit together to make a covariant pair.  Moreover, $u_\mu$ contains the trivial representation as a subrepresentation.  It follows that if $(H,u)$ is any unitary representation of $G$, then $(\pi^\mu\otimes 1_H,u^\mu\otimes u)$ is a covariant pair such that the $\Gamma$ part $u^\mu\otimes u$ contains $u$ as a subrepresentation.  As $u$ was arbitrary, it follows that property (T) with respect to $\mathcal{U}_X$ is the same as property (T) with respect to the collection of all unitary representations, which is just property (T).
\end{proof}

\begin{example}\label{spec gap rem}
Let $\Gamma$ be a discrete group.  By definition, a compact space $X$ with an action of $\Gamma$ and a quasi-invariant measure $\mu$ has \emph{spectral gap} if $\Gamma$ has property (T) with respect to the collection of representations consisting of just the Koopman representation on $L^2(X,\mu)$.  From Proposition \ref{trans t}, it follows that if $G=X\rtimes \Gamma$ has property (T) and $\mu$ is a quasi-invariant measure on $X$, then the action of $\Gamma$ on $(X,\mu)$ will have spectral gap. 
\end{example}

\section{Connections with other versions of property (T)}\label{meas sec}

In this section, we explore relationships with other versions of property (T): first other topological notions, then the measure-theoretic definition of Zimmer and Anantharaman-Delaroche.

\subsection{Other topological definitions of property (T)}

There are two other versions of topological property (T) for groupoids that either seem reasonable, or have appeared more-or-less explicitly in the literature.  In this subsection, we look at these, and (at least partially) determine the relationship to our notion.    As usual, throughout this section, `groupoid' means locally compact, Hausdorff, \'{e}tale groupoid with compact base space: see Convention \ref{gpd conv}.

The first possible variant of property (T) is as follows, and is a natural variant of our notion from Definition \ref{basic t}.

\begin{definition}\label{basic t1}
Let $G$ be a groupoid with compact base space.  A subset $K$ of $G$ is a \emph{Kazhdan$_1$ set} if there exists $c>0$ such that for any representation $(H,\pi)$ of $C_c(G)$ which does not have invariant vectors, and any $\xi\in H$, there exists $f\in C_c(G)$ with support in $K$ and $\|f\|_I\leq 1$ such that $\|f\xi-\Psi(f)\xi\|\geq c\|\xi\|$.

The groupoid $G$ has \emph{(topological) property (T$_1$)} if it admits a compact Kazhdan$_1$ set.
\end{definition}

For groups, it follows from the fact that the invariant vectors $H^\pi$ form a subrepresentation of any given representation $(H,\pi)$ that property (T) is equivalent to property (T$_1$).  Clearly we also have that property (T) implies (T$_1$) in general; the converse, however, is false as we will see in a moment.  For certain purposes, property (T$_1$) may be more natural than property (T), partly as it deals with genuine representations rather than subspaces of representations.  However, for our main applications on the construction of Kazhdan projections, property (T) is the more useful version.

Here is an example showing that property (T) is strictly stronger than property (T$_1$).

\begin{example}\label{ t1 no t} 
Let $\overline{\N}$ be the one-point compactification of the natural numbers considered as a trivial groupoid, and let $P$ be the pair groupoid on $\{0,1\}$.  Let $\overline \N \times P$ be the associated product groupoid: its base space is $\overline \N \times P^{(0)}$ with the obvious structure maps. Let $G$ be the subgroupoid of $\overline \N \times P$ defined by 
$$ G := (\N \times P) \cup (\{\infty \}\times P^{(0)}),$$
i.e.\ $\gamma= (n,g)\in \overline{\N}\times P^{(0)}$ is in $G$ if and only if $n\neq \infty$ or $g\in P^{(0)}$.   As $G$ is an open subgroupoid of $\overline{\N}\times P$, it is \'{e}tale.  We will denote by $G_n$ the subgroupoid sitting over $n$, i.e.\ the restriction $r^{-1}(\{n\}\times P^{(0)})$.  Note that $C_c(G_n)\cong M_2(\C)$ for each $n\in \N$; we will fix such an isomorphism that takes the characteristic functions of the points $0$ and $1$ in the unit space of $G_n$ to the two diagonal projections in $M_2(\C)$.  We have the following exact sequence
\begin{equation}\label{tt1ses}
0 \rightarrow \bigoplus_{n\in \N} C_c(G_n)\rightarrow C^*(G) \rightarrow \C^2 \rightarrow 0.
\end{equation}

We will show that $G$ has $(T_1)$ but not $(T)$. 
\begin{enumerate}
\item We first claim that $G$ has $(T_1)$.  Let $\phi : C_c(G)\rightarrow \mathcal{B}(H)$ be a representation.  If there is some $n$ such that $\phi$ is non-zero when restricted to $C_c(G_n)$, we claim that $\phi$ has invariant vectors.  Indeed, if $p\in C_c(G_n)$ is the projection corresponding to the matrix $\frac{1}{2}\begin{pmatrix} 1 & 1 \\ 1 & 1 \end{pmatrix}\in M_2(\C)$, then $\phi(p)$ is non-zero as $C_c(G_n)$ is simple.  Any non-zero element in the image of $\phi(p)$ is invariant.

On the other hand, a representation $\phi$ that is zero on all the subalgebras $C_c(G_n)$ factors through the quotient in line \eqref{tt1ses} as $\tilde \phi : \C^2 \rightarrow \mathcal{B}(H)$.  All vectors are invariant for such a representation. This shows that any representation $C^*(G)\rightarrow \mathcal{B}(H)$ has non-trivial invariant vectors, hence $(T_1)$ holds for vacuous reasons.
\item We now show that $G$ does not have $(T)$.  For the sake of contradiction, let us suppose a Kazhdan set $K$ exists. By compactness of $K$, there exists $N\in\N$ such that
$$ K \subseteq (\{0 , ... ,N\}\times P) \cup G^{(0)}.$$
Let now $\phi:C_c(G)\to \C^2$ be the representation one gets by composing the natural restriction map $C_c(G)\to C_c(G_{N+1})$ with the canonical representation of $C_c(G_{N+1})\cong M_2(\C)$ on $H:=\C^2$.  We then have that the vector $(-1,1)^T\in H$ is non-zero and in the subspace $H_\phi$ that is orthogonal to the constant vectors.  However, for any $f\in C_c(G)$ supported in $K$, the restriction of $f$ to $G_{N+1}$ is supported in $G_{N+1}^{(0)}$.  Hence $\phi(f)=\phi(\Psi(f))$, so this contradicts property $(T)$.
\end{enumerate}

\end{example}

The second definition of property (T) that we look at is also very natural.  This has appeared in the literature before for group actions in a slightly different but equivalent form: see \cite[page 441]{Dong:2012aa}.  We are not aware of any study of the general groupoid property in the literature before.

We need a standard preliminary definition: see for example \cite[Definition 5.6.15]{Brown:2008qy}.  

\begin{definition}\label{pt fun def}
Let $G$ be a groupoid.  A function $\phi:  G \rightarrow \C$ is \emph{positive type} if:
\begin{enumerate}[(i)]
\item $\phi(x)=1$ for all $x\in G^{(0)}$;
\item $\phi$ is symmetric, i.e. $\phi(g^{-1}) = \overline{\phi(g)}$ for every $g\in G$;
\item for every finite tuple $g_1, ... , g_n$ in $G$ with the same range and every tuple $z_1, ... , z_n$ of complex numbers, 
\[ \sum_{i,j=1}^n  \overline{z_i} z_j \phi(g_i^{-1} g_j ) \geq 0.\] 
\end{enumerate}
\end{definition} 

\begin{definition}\label{t2 def}
A groupoid $G$ has \emph{(topological) property (T$_2$)} if whenever $(\phi_i:G\to \C)_{i\in I}$ is a net of positive type functions in $C_c(G)$ that converges uniformly on compact sets to the constant function one, then $(\phi_i)$ converges uniformly to the constant function one. 
\end{definition}

The above definition is well-known to be equivalent to property (T) in the group case: this follows for example from \cite[Lemma 2]{Akemann:1981dz} combined with \cite[Theorem 13.5.2]{Dixmier:1977vl}.  It is moreover a very natural definition, and maybe of a more `topological' nature than ours: indeed, ours has some measure-theoretic flavour coming from the connections of invariant vectors to invariant measures, and also from the connection to representation theory.

The following lemma combined with Proposition \ref{trans t} shows that in the case of group actions, property (T$_2$) is strictly stronger than our property (T).

\begin{lemma}\label{t2 trans}
Let $\Gamma$ be a discrete group acting on a compact space $X$ by homeomorphisms, and let $X\rtimes \Gamma$ be the associated transformation groupoid.  If $X\rtimes \Gamma$ has property (T$_2)$, then $\Gamma$ has property (T).
\end{lemma}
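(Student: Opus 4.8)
The plan is to show that property (T$_2$) for $X\rtimes\Gamma$ forces the analogous "no almost invariant positive-type functions" condition on $\Gamma$ itself, which (by the group version of Definition \ref{t2 def}, equivalent to property (T) by the references cited after Definition \ref{t2 def}) gives property (T) for $\Gamma$. The key observation is that a positive-type function $\varphi:\Gamma\to\C$ pulls back to a positive-type function on the groupoid $G=X\rtimes\Gamma$, essentially by composing with the projection $G\to\Gamma$, $(gx,g,x)\mapsto g$; and uniform-on-compacts convergence of groupoid positive-type functions restricts to a condition on $\Gamma$ that is stronger than (in fact equivalent to) uniform-on-compacts convergence of the original group functions, because compact subsets of $G$ project to finite (hence compact) subsets of $\Gamma$.

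First I would record that if $\varphi:\Gamma\to\C$ is of positive type in the group sense (Definition \ref{pt fun def} applied to $\Gamma$ viewed as a groupoid, or equivalently \cite[Definition 5.6.15]{Brown:2008qy}), then the function $\widetilde\varphi:G\to\C$ defined by $\widetilde\varphi(gx,g,x):=\varphi(g)$ is of positive type on $G$: condition (i) holds since $\widetilde\varphi$ is $1$ on $G^{(0)}$, condition (ii) is immediate, and condition (iii) follows because a tuple $g_1,\dots,g_n\in G$ with common range has the form $g_i=(h x,h_i,x)$ with $h=h_i x$... more precisely all $g_i=(h_i x, h_i, x)$ share the source $x$ and range constraint, and $g_i^{-1}g_j=(x,h_i^{-1}h_j \cdot,\,\cdot)$ projects to $h_i^{-1}h_j\in\Gamma$, so the sum in (iii) for $\widetilde\varphi$ is exactly the corresponding sum for $\varphi$, which is nonnegative by positive-typeness of $\varphi$. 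Second, I would note that $\widetilde\varphi$ is continuous on $G$ whenever $\varphi$ is any function on $\Gamma$ (automatic, $\Gamma$ discrete), using that $G$ is a subspace of $X\times\Gamma\times X$ and the $\Gamma$-coordinate is locally constant. Third, suppose $(\varphi_i)_{i\in I}$ is a net of positive-type functions on $\Gamma$ converging pointwise (equivalently, since compacts in $\Gamma$ are finite, uniformly on compacts) to the constant function $1$; then $(\widetilde\varphi_i)$ is a net of positive-type functions on $G$ converging uniformly on compact subsets of $G$ to $1$, because any compact $K\subseteq G$ is contained in $\{(gx,g,x): g\in S\}$ for some finite $S\subseteq\Gamma$ (this finiteness was used in the proof of Proposition \ref{trans t}), and on such a set $\sup|\widetilde\varphi_i-1|=\max_{g\in S}|\varphi_i(g)-1|\to 0$. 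By property (T$_2$) of $X\rtimes\Gamma$, $(\widetilde\varphi_i)$ then converges uniformly to $1$ on all of $G$, and since $\sup_{g\in\Gamma}|\varphi_i(g)-1|=\sup_{G}|\widetilde\varphi_i-1|$ we conclude $(\varphi_i)$ converges uniformly to $1$. This is precisely the statement that $\Gamma$ has property (T$_2$) as a group, hence (by the remark following Definition \ref{t2 def}, i.e.\ \cite[Lemma 2]{Akemann:1981dz} together with \cite[Theorem 13.5.2]{Dixmier:1977vl}) property (T).

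The main obstacle is the bookkeeping in verifying condition (iii) of Definition \ref{pt fun def} for $\widetilde\varphi$: one has to be careful that an arbitrary finite tuple in $G$ with common range really does reduce, under the composition $g_i^{-1}g_j$, to the group-level tuple $h_i^{-1}h_j$, i.e.\ that the base-point coordinates play no role. This is a routine but slightly fiddly check with the explicit formulas $(ghx,g,hx)(hx,h,x)=(ghx,gh,x)$ and $(gx,g,x)^{-1}=(x,g^{-1},gx)$ from the definition of the transformation groupoid; the rest of the argument is essentially formal. I would state the positive-type pullback as a short separate claim (or simply inline it) and leave these direct verifications to the reader, in keeping with the style of the surrounding lemmas.
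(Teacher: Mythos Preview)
Your proposal is correct and follows essentially the same approach as the paper: pull back positive type functions on $\Gamma$ to $G=X\rtimes\Gamma$ via $\widetilde\varphi(gx,g,x)=\varphi(g)$, verify the pullbacks are positive type and converge uniformly on compacts, apply property (T$_2$) for $G$, and deduce uniform convergence on $\Gamma$. The paper's proof is a terser version of exactly this argument, leaving the direct checks (positive type, convergence on compacts) to the reader; you have simply filled in more of the details.
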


\begin{proof}
Assume $G:=X\rtimes \Gamma$ has property (T$_2$), and let $(\phi_i:\Gamma \to \C)$ be a net of positive type functions converging uniformly on compact sets (i.e.\ pointwise, as $\Gamma$ is discrete) to the constant function one; to see that $\Gamma$ has (T) it suffices to prove that $(\phi_i)$ converges uniformly to one.  To see this, for each $i$ let $\widetilde{\phi_i}:G\to \C$ be the pullback defined by 
$$
\widetilde{\phi_i}(gx,g,x):=\phi_i(g).
$$
Then direct checks show that each $\widetilde{\phi_i}$ is positive type, and that the net $(\widetilde{\phi_i})$ converges uniformly to one on compact subsets of $G$; hence by property (T$_2$) it converges uniformly to one.  It follows that the original net $(\phi_i)$ also converges uniformly to one, so we are done.
\end{proof}

Using the discussion in \cite[Section 11.4.3]{Roe:2003rw}, one also has the following result, showing that property (T$_2$) is essentially trivial for coarse groupoids.

\begin{lemma}\label{t2 cg}
Let $X$ be a bounded geometry metric space.  Then the coarse groupoid $G(X)$ has property (T$_2$) if and only if $X$ is bounded. \qed
\end{lemma}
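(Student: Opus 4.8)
The plan is to reduce the statement to facts about coarse structures recorded in \cite[Section 11.4.3]{Roe:2003rw}. First, if $X$ is bounded then the coarse groupoid $G(X)$ is compact: the coarse structure has a single largest controlled set, namely $X\times X$ itself, so $G(X)=\overline{X\times X}=\beta X\times\beta X$ is compact. A compact groupoid trivially has property (T) by Proposition \ref{com gpd t}, and it is straightforward to check (much as in that proof, using the projection $p=\chi/\Psi(\chi)$) that property (T) for a compact groupoid forces property (T$_2$): if $(\phi_i)$ converges to $1$ uniformly on compact sets, then since $G(X)$ is itself compact, this is already uniform convergence. So the ``if'' direction is immediate.

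For the ``only if'' direction, I would argue the contrapositive: assume $X$ is unbounded and construct a net of positive type functions on $G(X)$ converging to $1$ uniformly on compact sets but not uniformly. The key input is that for a bounded geometry metric space, positive type functions on $G(X)$ correspond bijectively (via restriction to $X\times X\subseteq G(X)$, then extension by continuity) to symmetric, $\{(x,y):\phi(x,y)\neq\text{bounded away from }1\}$-controlled kernels $\phi:X\times X\to\C$ that are positive type in the usual kernel sense and equal $1$ on the diagonal; moreover a net of such kernels converges uniformly on compact subsets of $G(X)$ if and only if for every controlled $E$ it converges uniformly on $E$, and converges uniformly on $G(X)$ if and only if it converges uniformly on $X\times X$. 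This correspondence, and the translation of the relevant topologies, is exactly the content of \cite[Section 11.4.3]{Roe:2003rw}; I would cite it rather than reprove it. Given this, it suffices to exhibit positive type kernels $\phi_n$ on $X$ with $\phi_n\to 1$ uniformly on each controlled set but $\sup_{x,y}|1-\phi_n(x,y)|\not\to 0$. When $X$ is unbounded one can take, e.g., $\phi_n(x,y):=\exp(-d(x,y)^2/n)$ (Gaussian kernels, which are positive type on any metric space and tend to $1$ uniformly on each bounded-diameter set since $\sup_{d(x,y)\le R}|1-\phi_n(x,y)|\to 0$); unboundedness gives pairs $(x,y)$ with $d(x,y)$ arbitrarily large, so $\sup_{x,y}(1-\phi_n(x,y))=1$ for all $n$, and uniform convergence fails. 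Hence $G(X)$ fails property (T$_2$).

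The main obstacle is bookkeeping around the correspondence between positive type functions on the coarse groupoid and positive type kernels on $X$, together with the precise matching of ``uniform on compact sets'' (in $G(X)$) with ``uniform on each controlled set'' (in $X\times X$) and of ``uniform on $G(X)$'' with ``uniform on $X\times X$''. All of this is spelled out in \cite[Section 11.4.3]{Roe:2003rw} (and implicit in Proposition \ref{t geot} here), so the honest work is just confirming that the Gaussian kernels really are supported in the required sense and converge as claimed; I would leave these routine verifications to the reader.
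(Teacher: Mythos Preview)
Your overall architecture matches the paper's: both directions reduce to the translation in \cite[Section 11.4.3]{Roe:2003rw} between positive type functions on $G(X)$ and positive type kernels on $X\times X$, together with the observation that ``uniformly on compacta in $G(X)$'' corresponds to ``uniformly on each controlled set''. The ``if'' direction is fine.

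There is, however, a genuine gap in the ``only if'' direction. You assert that the Gaussian kernels $\phi_n(x,y)=\exp(-d(x,y)^2/n)$ are positive type on any metric space; this is false. By Schoenberg's theorem, $e^{-tF}$ is positive type for all $t>0$ if and only if $F$ is conditionally negative definite, and $d^2$ is conditionally negative definite precisely when $(X,d)$ embeds isometrically in a Hilbert space. A generic bounded geometry metric space does not have this property (already finite graph metrics such as that on $K_{2,3}$ fail it), so your proposed net need not consist of positive type functions at all.

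A repair that works for arbitrary bounded geometry $X$, and is in the spirit of what Roe does, is to use kernels manufactured from $\ell^2(X)$ rather than from the metric directly: for $R>0$ set $\xi^R_x:=|B(x,R)|^{-1/2}\chi_{B(x,R)}\in\ell^2(X)$ and $\phi_R(x,y):=\langle\xi^R_x,\xi^R_y\rangle$. Each $\phi_R$ is positive type by construction, $\phi_R\to 1$ uniformly on every controlled set as $R\to\infty$ (this is where bounded geometry is used), and $\phi_R(x,y)=0$ whenever $d(x,y)>2R$, so uniform convergence on all of $X\times X$ fails when $X$ is unbounded. You will also need to check that these $\phi_R$ extend continuously to $G(X)$; this is part of the bookkeeping in \cite[Section 11.4.3]{Roe:2003rw} that you are already citing, but note that your Gaussian candidate would also have required this check, and you did not address it.
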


Hence for coarse groupoids, property (T$_2$) is also strictly stronger than our property (T) by Proposition \ref{t geot}.  It is plausible from these examples that (T$_2$) implies (T) in general, but we were unable to show this.

\subsection{Measured property (T)}

In \cite{Anantharaman-Delaroche:2005pb}, Anantharaman-Delaroche defined a notion of property (T) for a measured groupoid, building on earlier work of Zimmer \cite{Zimmer:2981ds} in the case of a measured equivalence relation.  Our aim in this subsection is to discuss the relationship of this measure-theoretic notion to our topological notion: in particular (Theorem \ref{t to t} below), we show that the topological notion implies the measure-theoretic one for a large class of measures

Throughout this subsection $G$ will be a groupoid (as usual, locally compact, Hausdorff, \'{e}tale, and with compact unit space).  As we are interested in measure theory, we will assume that $G$ is second countable to avoid measure-theoretic pathologies.   We assume moreover that the base space $G^{(0)}$ is equipped with an invariant probability measure $\mu$.  Associated to this measure $\mu$ is the measure $r^*\mu$ on $G$ defined as a functional on $C_c(G)$ by the formula
$$
r^*\mu:f\mapsto \int_{G^{(0)}}\sum_{g\in G^x}f(g)d\mu(x).
$$
We equip $G$ with the Borel structure induced by the topology, and with the measure class $C$ of $r^*\mu$.  When we say `almost everywhere' below, we mean with respect to $\mu$ when the ambient space is $G^{(0)}$, and with respect to $C$ when the ambient space is $G$.  The pair $(G,C)$ is a measured groupoid in the sense of \cite[Definition 2.7]{Anantharaman-Delaroche:2005pb}.  As $C$ is determined by $\mu$, we will generally write $(G,\mu)$ for this measured groupoid.

We want to compare property (T) for $(G,\mu)$ in the sense of \cite[Section 4]{Anantharaman-Delaroche:2005pb} with our notion of property (T) for $G$.  To avoid confusion, let us call the former property \emph{measured property (T)} for $(G,\mu)$, and the latter property \emph{topological property (T)} for $G$.  

We first recall the definitions necessary to make sense of measured property (T).  The following is \cite[Definition 3.1]{Anantharaman-Delaroche:2005pb}.

\begin{definition}\label{rep}
A \emph{representation} of $G$ consists of the following data:
\begin{enumerate}[(i)]
\item a Hilbert bundle $H=(H_x)_{x\in G^{(0)}}$ over $G^{(0)}$ in the sense of \cite[Definition 2.2]{Anantharaman-Delaroche:2005pb};
\item the associated Borel groupoid $\text{Iso}(G^{(0)}*H)$ consisting of triples $(x,V,y)$ where $V:H_y\to H_x$ is a unitary isomorphism \cite[Section 3.1]{Anantharaman-Delaroche:2005pb};
\item a Borel homomorphism $\pi:G\to \text{Iso}(G^{(0)}*H)$ sending each unit $x\in G^{(0)}$ to the corresponding unit $(x,\text{Id}_{H_x},x)$ of $\text{Iso}(G^{(0)}*H)$.
\end{enumerate}
\end{definition}

We will write representations of $G$ in the sense above as pairs $(H,L)$.  We will abuse notation by writing $\pi_g:H_{s(g)}\to H_{r(g)}$ for the unitary $V$ such that $\pi_g=(r(g),V,s(g))$.  

The next definitions are from \cite[Sections 2.1 and 4.1]{Anantharaman-Delaroche:2005pb}.

\begin{definition}\label{sec}
Let $H$ be a Hilbert bundle over $G^{(0)}$ in the sense of \cite[Definition 2.2]{Anantharaman-Delaroche:2005pb}.  The space $S(G^{(0)},\mu,H)$ consists of all Borel sections 
$$
\xi:G^{(0)}\to H,\quad x\mapsto \xi_x
$$ 
(where `section' means that $\xi(x)\in H_x$), modulo almost everywhere equality, and equipped with the topology defined by the equivalent conditions from \cite[Proposition 2.3]{Anantharaman-Delaroche:2005pb}.   An element $\xi$ of $S(G^{(0)},\mu,H)$ is a \emph{unit section} if $\|\xi_x\|_{H_x}=1$ for almost all $x\in G^{(0)}$ (see \cite[Section 4.1]{Anantharaman-Delaroche:2005pb}).  
\end{definition}

The next definitions are from \cite[Definition 4.2]{Anantharaman-Delaroche:2005pb}.

\begin{definition}\label{inv}
Let $(H,\pi)$ be a representation of $G$.  
\begin{enumerate}[(i)]
\item A section $\xi$ in $S(G^{(0)},\mu,H)$ is \emph{invariant} if 
$$
\xi_{r(g)}=\pi_g\xi_{s(g)}\quad \text{in}\quad H_{r(g)}
$$
for almost every $g\in G$.
\item The representation $(H,\pi)$ \emph{almost contains unit invariant sections} if there is a sequence of unit sections $(\xi^n)$ such that 
$$
\|\xi^n_{r(g)}-\pi_g\xi^n_{s(g)}\|_{H_{r(g)}} \to 0
$$
for almost every $g\in G$.
\end{enumerate}
\end{definition}

Finally, we get to the definition of measured property (T) for our measured groupoid.  The following is \cite[Definition 4.3]{Anantharaman-Delaroche:2005pb}

\begin{definition}\label{meas t}\
Let $G$ be a groupoid (locally compact, Hausdorff, \'{e}tale, second countable, with compact base space) equipped with an invariant probability measure $\mu$ on $G^{(0)}$.  The measured groupoid $(G,\mu)$ has \emph{measured property (T)} if whenever a representation $(H,L)$ almost contains unit invariant sections, it actually contains a unit invariant section.
\end{definition}

\begin{remark}\label{ad gen}
Anantharaman-Delaroche's definition of measured property (T) applies to a more general class of measured groupoids than ours.  For example, Anantharaman-Delaroche does not assume the presence of an underlying topology, and allows quasi-invariant measures on the base space.  There seems to be no obvious connection between our definition and that of Anantharaman-Delaroche in the case of a quasi-invariant probability measure: see Lemma \ref{inv lem} and the following comments at the end of this section.
\end{remark}

Here is the main result of this section.

\begin{theorem}\label{t to t}
Let $G$ be a groupoid with topological property (T).  Then for every ergodic invariant probability measure $\mu$ on $G^{(0)}$, the measured groupoid $(G,\mu)$ has measured property (T).
\end{theorem}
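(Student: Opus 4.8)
The plan is to contrapose the definitions: start with a representation $(H,\pi)$ of the measured groupoid $(G,\mu)$ in the sense of Definition \ref{rep} that almost contains unit invariant sections but has \emph{no} unit invariant section, and manufacture from it a genuine $*$-representation of $C_c(G)$ violating the topological Kazhdan condition for any prescribed compact set $K$. The natural vehicle is the $L^2$-section space $\mathcal{H} := L^2(G^{(0)},\mu,H)$ of the Hilbert bundle, on which $C_c(G)$ acts by the integrated form: for $f\in C_c(G)$ and $\eta\in\mathcal H$, set $(\pi(f)\eta)_x := \sum_{g\in G^x} f(g)\,\pi_g\,\eta_{s(g)}$; one checks (using the étale structure, exactly as in Example \ref{triv rep} and Lemma \ref{cov pair}) that this is a well-defined $*$-representation of $C_c(G)$. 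The first step is to record carefully that under this correspondence a unit section $\xi$ of the bundle that is \emph{invariant} in the sense of Definition \ref{inv} gives a constant vector for $\pi$ in the sense of Definition \ref{con vec def} (because $\Psi(f)_x = \sum_{g\in G^x} f(g)$ and invariance lets one pull $\pi_g\xi_{s(g)} = \xi_x$ out of the sum), and conversely. More importantly, an \emph{almost-invariant} sequence of unit sections $(\xi^n)$ with $\|\xi^n_{r(g)}-\pi_g\xi^n_{s(g)}\|\to 0$ for a.e.\ $g$ should, after passing to the $L^2$-sections and using dominated convergence on the compact set $K$ (all the relevant sums are finite and uniformly bounded since $\|f\|_I\le 1$ and sections are unit sections, $\mu$ a probability measure), yield vectors $\eta^n\in\mathcal H$ of norm one with $\|\pi(f)\eta^n-\Psi(f)\eta^n\|\to 0$ uniformly over $f$ supported in $K$ with $\|f\|_I\le 1$.

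The subtlety is that these $\eta^n$ need not lie in $\mathcal H_\pi = (\mathcal H^\pi)^\perp$; the topological Kazhdan condition only controls vectors orthogonal to the constant vectors. So the second step is to decompose $\eta^n = \eta^n_0 + \eta^n_1$ with $\eta^n_0\in\mathcal H^\pi$ and $\eta^n_1\in\mathcal H_\pi$, and argue that $\|\eta^n_1\|$ stays bounded away from zero. This is where ergodicity of $\mu$ enters: by Corollary \ref{inv meas} and Proposition \ref{inv vec char} the constant vectors correspond to invariant measures absolutely continuous with respect to $\mu$, and ergodicity forces $\mathcal H^\pi$ to consist precisely of sections supported on the (a.e.\ unique up to scalar) invariant section structure — concretely, $\mathcal H^\pi \cong$ the space of invariant $L^2$-sections of the bundle. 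If $\|\eta^n_1\|\to 0$, then $\eta^n$ is asymptotically a genuine invariant $L^2$-section of norm one; a measurable-selection / normalization argument (normalize fiberwise where the limiting section is nonzero) then produces an honest unit invariant section of $(H,\pi)$, contradicting our standing assumption. Hence $\|\eta^n_1\|\ge\delta>0$ along a subsequence. Then $\eta^n_1/\|\eta^n_1\|$ is a unit vector in $\mathcal H_\pi$ with $\|\pi(f)(\eta^n_1/\|\eta^n_1\|)-\Psi(f)(\eta^n_1/\|\eta^n_1\|)\|\le \frac{1}{\delta}\|\pi(f)\eta^n-\Psi(f)\eta^n\| + (\text{error from }\eta^n_0)$; one must check the $\eta^n_0$ term is also small, which follows because $\mathcal H^\pi$ need not be $\pi$-invariant but $\pi(f)$ and $\Psi(f)$ agree on it by the very definition of constant vector, so $\pi(f)\eta^n_0-\Psi(f)\eta^n_0 = 0$. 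Therefore the Kazhdan constant is violated for every compact $K$, contradicting topological property (T).

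\textbf{Main obstacle.} I expect the genuinely delicate step to be the second one: controlling the behavior of the almost-invariant vectors relative to the (non-invariant!) subspace $\mathcal H^\pi$ of constant vectors, and in particular extracting an actual unit invariant \emph{section} of the bundle from a sequence of $L^2$-sections that are only asymptotically invariant in $L^2$-norm. The passage from "$L^2$-almost-invariant, and the projection onto genuine invariants does not vanish" to "there exists a pointwise unit invariant section" requires a measurable normalization argument on the fibers of $H$ and genuine use of the ergodicity hypothesis to rule out the limit being spread over a non-ergodic decomposition; this is the one place a naive translation between the topological and measured frameworks could fail. A secondary technical point is checking that the integrated-form representation on $L^2$-sections captures \emph{all} the measured-representation data faithfully enough that "no unit invariant section of $(H,\pi)$" really does imply "no unit invariant $L^2$-section", which needs the sections to be suitably square-integrable — harmless here since $\mu$ is a probability measure and we work with unit sections, but worth stating.
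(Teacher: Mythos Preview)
Your approach is essentially the paper's: form the integrated representation on $\mathcal H=L^2(G^{(0)},\mu,H)$, feed the almost-invariant unit sections $\xi^n$ into it, and use dominated convergence over the compact Kazhdan set $K$ to contradict the Kazhdan inequality. The ingredients and the endgame match.

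Where you diverge is in your ``second step'', and here you are making life harder than necessary. You decompose $\eta^n=\eta^n_0+\eta^n_1$ and worry about controlling $\|\eta^n_1\|$ via a limiting/normalization argument on the $\eta^n_0$. The paper instead proves outright that $\mathcal H^\pi=\{0\}$: if $\xi\in\mathcal H$ is a nonzero constant vector, unwinding $\pi(f)\xi=\Psi(f)\xi$ over bisections gives $\pi_g\xi_{s(g)}=\xi_{r(g)}$ a.e., so $x\mapsto\|\xi_x\|$ is $G$-invariant, hence constant a.e.\ by ergodicity, hence (after scaling) $\xi$ is a unit invariant section---contradicting the standing assumption. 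Once $\mathcal H^\pi=\{0\}$, every $\xi^n$ already lies in $\mathcal H_\pi$ and your decomposition is vacuous. This is exactly the ergodicity application you were reaching for, but applied to a \emph{single} hypothetical constant vector rather than to a limit of almost-invariant ones; it sidesteps entirely the measurable-selection worry you flagged as the main obstacle. Your route would also work (since $\pi(f)\eta^n_0-\Psi(f)\eta^n_0=0$ is indeed immediate, and any nonzero $\eta^n_0$ already yields a unit invariant section by the same ergodicity argument without taking limits), but the paper's formulation is cleaner and makes the role of ergodicity more transparent.
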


\begin{proof}
Assume for contradiction that $\mu$ is an invariant ergodic measure on $G^{(0)}$ and $(H,\pi)$ a representation of $G$ that almost has unit invariant sections, but no invariant section.  Let $H_\mu$ be the Hilbert space completion of the collection of all bounded elements of $S(G^{(0)},\mu,H)$, equipped with the inner product 
$$
\langle \xi,\eta\rangle_{H_\mu}:=\int_{G^{(0)}} \langle \xi_x,\eta_x\rangle_{H_x}d\mu(x).
$$
As described in \cite[Section 2.3.3]{Renault:2009zr}, $(H,\pi)$ integrates to a $*$-representation 
$$
\pi:C_c(G)\to H_\mu
$$
with the property that for all $\xi,\eta\in H_\mu$,
$$
\langle \xi,\pi(f)\eta\rangle =\int_{G^{(0)}} \sum_{g\in G^x}f(g)\langle \xi_x,\pi_g\eta_{s(g)}\rangle_{H_x}d\mu(x).
$$

We claim first that the representation $(H_\mu,\pi)$ of $C_c(G)$ contains no non-zero constant vectors.  Assume for contradiction that $\xi\in H_\mu$ is a constant unit vector, so that 
\begin{equation}\label{const}
\pi(\Psi(f))\xi=\pi(f)\xi
\end{equation}
for all $f\in C_c(G)$.  Writing out what this means,
$$
(\pi(f)\xi)(x)=\sum_{g\in G^x}f(g)\pi_g\xi_{s(g)} \quad \text{and}\quad (\pi(\Psi(f))\xi)_x=\sum_{g\in G^x}f(g)\xi_x
$$
and so line \eqref{const} above says that 
$$
\sum_{g\in G^x}f(g)\pi_g\xi_{s(g)} =\sum_{g\in G^x}f(g)\xi_x
$$
for every $f\in C_c(G)$, and almost every $x\in G^{(0)}$.  As this holds for all $f\in C_c(G)$, considering functions $f$ that are supported on bisections (and using second countability) shows that this is impossible unless $\pi_g\xi_{s(g)}=\xi_{r(g)}$ for almost every $g\in G$.   This implies that the function 
$$
G^{(0)}\to \R, \quad x\mapsto \|\xi_x\|_{H_x}
$$ 
is invariant under the action of $G$ on $G^{(0)}$, and thus by ergodicity, it is constant almost everywhere.  As $\mu$ is a probability measure and as $\|\xi\|_{H_\mu}=1$, this forces $\|\xi_x\|=1$ for almost every $x\in G^{(0)}$.  At this point, we have that $\xi$ is a unit invariant section for $(H,\pi)$, which is the desired contradiction. 

Now, let $(\xi^n)$ be a sequence as in the definition of almost containing unit invariant sections, so that 
\begin{equation}\label{alm inv sec}
\|\xi^n_{r(g)}-\pi_g\xi^n_{s(g)}\|^2_{H_{r(g)}} \to 0
\end{equation}
for almost every $g\in G$.  From topological property (T) there exists a compact subset $K$ of $G$ and $c>0$ such that for each $\xi^n$ there exists $f_n\in C_c(G)$ supported in $K$ and with $\|f_n\|_I\leq 1$ such that 
$$
\|\pi(f_n)\xi^n-\pi(\Psi(f_n))\xi^n\|^2_{H_\mu}\geq c.
$$
Writing out what this means,
$$
\int_{G^{(0)}}\Big\|\sum_{g\in G^x}f_n(g)\pi_g\xi^n_{s(g)}-\sum_{x\in G^x}f_n(g)\xi^n_x\Big\|_{H_x}^2d\mu(x)\geq c.
$$
Using that $\|f_n\|_I\leq 1$ and that each $f_n$ is supported in $K$ we thus get
\begin{align}\label{meas t inqs}
c & \leq \int_{G^{(0)}}\Big\|\sum_{g\in G^x}f_n(g)\big(\pi_g\xi^n_{s(g)}-\xi^n_x\big)\Big\|_{H_x}^2d\mu(x) \nonumber \\
& \leq \int_{G^{(0)}}\Big(\sum_{g\in G^x}|f(g)|\|\pi_g\xi^n_{s(g)}-\xi^n_x\|_{H_x}\Big)^2d\mu(x) \nonumber \\ 
& \leq \int_{G^{(0)}}\sup_{g\in K\cap G^x}\|\pi_g\xi^n_{s(g)}-\xi^n_x\|_{H_x}^2d\mu(x).
\end{align}
Now, as $K\cap G^x$ is finite for all $x\in G^{(0)}$, line \eqref{alm inv sec} gives that the integrand above tends to zero pointwise almost everywhere.  As each $\xi^n$ is a unit section, the integrand is moreover bounded above by four; as $\mu$ is a probability measure we may thus apply the dominated convergence theorem to get that the final integral in line \eqref{meas t inqs} tends to zero as $n$ tends to infinity.  As it is bounded below by $c$ for all $n$, this gives the required contradiction. 
\end{proof}

To conclude this section, we make some comments about the relationship of our definition to that of Anantharaman-Delaroche when one only has a quasi-invariant measure on the base space.  The essential point is that the notions of constant vectors one gets in that case are different.

Recall then that if $G$ is a groupoid and $\mu$ is a quasi-invariant measure on $G^{(0)}$ then there is an associated modular function $D:G\to (0,\infty)$ defined by $D=d(r^*\mu) / d(s^*\mu)$.   If moreover $(H,\pi)$ is a representation of $G$ in the sense of Definition \ref{rep} above, then associated to the triple $(H,\mu,\pi)$ we may form the Hilbert space 
$$
H_\mu:=L^2(G^{(0)},\{H_x\},\mu)
$$
of $L^2$-sections of the family $\{H_x\}$ with respect to the measure $\mu$.  Moreover, there is a representation of $C_c(G)$ on $H_\mu$ uniquely determined by the condition
$$
\langle \xi,\pi(f)\eta\rangle=\int_{G^{(0)}} \sum_{g\in G^x}f(g)D^{-1/2}(g)\langle \xi_x,\pi_g\eta_{s(g)}\rangle_{H_x}d\mu(x)
$$
for all $\xi,\eta\in H_\mu$ and $f\in C_c(G)$.  The representation $(H_\mu,\pi)$ of $C_c(G)$ is called the \emph{integrated form} of the triple $(H,\mu,\pi)$.  Conversely, \emph{Renault's disintegration theorem} \cite[Theorem 2.3.15]{Renault:1980fk} says that when $G$ is second countable, any representation $(H,\pi)$ of $C_c(G)$ arises like this.

We leave the proof of the following lemma to the reader.

\begin{lemma}\label{inv lem}
Let $(H_\mu,\pi)$ be the integrated form of the representation $(H,\mu,\pi)$ of a second countable groupoid $G$.  Let $D$ be the modular function associated to $\mu$.  Then a vector $\xi\in H_\mu$ is constant in the sense of Definition \ref{con vec def} if and only if 
$$
\xi_{r(g)}=D^{-1/2}(g)\pi_g\xi_{s(g)}
$$
for almost all $g\in G$, where `almost all' is meant with respect to the measure $r^*\mu$. \qed
\end{lemma}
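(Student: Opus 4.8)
The plan is to unwind the defining equation $\pi(f)\xi=\Psi(f)\xi$ of Definition \ref{con vec def} using the explicit formula for the integrated form. First I would record the two pointwise expressions: for $f\in C_c(G)$ and $\mu$-almost every $x\in G^{(0)}$,
$$
(\pi(f)\xi)_x=\sum_{g\in G^x}f(g)D^{-1/2}(g)\pi_g\xi_{s(g)}\quad\text{and}\quad(\Psi(f)\xi)_x=\Big(\sum_{g\in G^x}f(g)\Big)\xi_x,
$$
the first being the formula defining the integrated form, the second coming from the fact that for $h\in C(G^{(0)})\subseteq C_c(G)$ the only term in the defining sum is the unit $g=x$, where $D(x)=1$ and $\pi_x=\mathrm{Id}$. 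Hence $\xi$ is constant if and only if for every $f\in C_c(G)$ one has $\sum_{g\in G^x}f(g)\big(D^{-1/2}(g)\pi_g\xi_{s(g)}-\xi_x\big)=0$ for $\mu$-almost every $x$.

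The ``if'' direction would then be immediate: if $\xi_{r(g)}=D^{-1/2}(g)\pi_g\xi_{s(g)}$ for $r^*\mu$-almost every $g$, then, since an $r^*\mu$-null subset $N\subseteq G$ satisfies $\int_{G^{(0)}}\#(G^x\cap N)\,d\mu(x)=0$ and hence meets $G^x$ trivially for $\mu$-almost every $x$, for $\mu$-almost every $x$ the bracketed term vanishes for every $g\in G^x$; so the displayed sum is zero for all $f$ and $\xi$ is constant.

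For the ``only if'' direction I would use second countability to choose a countable cover of $G$ by open bisections $\{B_n\}$. Fixing $n$ and setting $U_n:=r(B_n)$, the map $r$ restricts to a homeomorphism $B_n\to U_n$; for $x\in U_n$ let $g_x$ be the unique element of $B_n\cap G^x$ and put $v(x):=D^{-1/2}(g_x)\pi_{g_x}\xi_{s(g_x)}-\xi_x\in H_x$, a Borel section over $U_n$ (Borel-ness coming from the Borel structure on $\mathrm{Iso}(G^{(0)}*H)$). Applying the constancy condition to functions $f=\phi\circ r$ supported in $B_n$ with $\phi\in C_c(U_n)$, the sum over $G^x$ collapses to a single term and one gets $\phi(x)v(x)=0$ for $\mu$-almost every $x$. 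Choosing a countable family $(\phi_k)\subseteq C_c(U_n)$ such that every point of $U_n$ lies where some $\phi_k$ is non-zero (possible as $U_n$ is second countable, hence Lindel\"of) and intersecting the corresponding conull sets yields $v(x)=0$ for $\mu$-almost every $x\in U_n$. Transporting this back along $r|_{B_n}$ — under which $r^*\mu$ restricted to $B_n$ corresponds to $\mu$ restricted to $U_n$, directly from the definition of $r^*\mu$ — gives $\xi_{r(g)}=D^{-1/2}(g)\pi_g\xi_{s(g)}$ for $r^*\mu$-almost every $g\in B_n$, and then the countable union $G=\bigcup_n B_n$ gives it for $r^*\mu$-almost every $g$.

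The main obstacle is the bookkeeping with the different ``almost everywhere'' quantifiers: the constancy hypothesis a priori supplies, for each fixed $f$, only a conull set of $x$'s depending on $f$, and one must exploit separability (encoded here by the countable family of bisections and the countable families $(\phi_k)$) to manufacture a single conull set witnessing the pointwise identity for almost every $g$. The remaining points — Borel measurability of the sections involved, and the compatibility of $r^*\mu$ with $\mu$ under the bisection homeomorphisms — are routine and follow directly from the definitions, which is presumably why the authors leave the proof to the reader.
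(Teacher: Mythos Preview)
Your argument is correct. The paper does not actually prove this lemma --- it is stated with a terminal \qed and the preceding sentence ``We leave the proof of the following lemma to the reader'' --- so there is no detailed proof to compare against. That said, your approach (restrict to functions supported on open bisections and use second countability to pass from ``for each $f$, a.e.\ $x$'' to ``for a.e.\ $g$'') is exactly the method the authors invoke in the closely analogous step inside the proof of Theorem~\ref{t to t}, where in the invariant-measure case $D\equiv 1$ they write ``considering functions $f$ that are supported on bisections (and using second countability) shows that this is impossible unless $\pi_g\xi_{s(g)}=\xi_{r(g)}$ for almost every $g\in G$.'' Your proposal is a careful fleshing-out of precisely that sketch, with the modular factor inserted.
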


On the other hand, Anantharaman-Delaroche uses the definition of constant from Definition \ref{inv} above, that $\xi_{r(g)}=\pi_g\xi_{s(g)}$ for almost every $g\in G$, also in the case of a quasi-invariant measure of $G^{(0)}$.  Thus in the case when $\mu$ is only quasi-invariant, it seems unreasonable to expect much connection between the notions of Anantharaman-Delaroche (and also of Zimmer) and ours.

\section{Connections with a-T-menability}\label{atmen sec}

As usual in this section, groupoids are always locally compact, Hausdorff, \'{e}tale, and have compact base space: see Convention \ref{gpd conv}.

The property of a-T-menability for groupoids was introduced by Tu \cite[Section 3]{Tu:1999bq} as part of his work on the Baum-Connes conjecture.  Just as for groups, a-T-menability for groupoids is a generalisation of amenability that admits several useful characterisations.  Moreover, just as for groups, all amenable groupoids are a-T-menable.  

For groups, the name a-T-menability (due to Gromov) came about as this condition is like amenability, and incompatible with property (T): indeed a discrete group is a-T-menable and has property (T) if and only if it is finite.  Our goal in this section is to show that property (T) for a groupoid is also incompatible with a-T-menability in many cases.  

Here is a sample result that we can deduce from our main theorem.  To state it, recall that a groupoid is \emph{minimal} if for every $x\in G^{(0)}$, the \emph{orbit} $Gx$ defined by $Gx:=s(G^x)$ is dense in $G^{(0)}$.  As usual, we assume throughout the section that all our groupoids are locally compact, Hausdorff, \'{e}tale, and have compact base space.

\begin{theorem}\label{atmen t min}
Let $G$ be a minimal groupoid with property (T), that is a-T-menable, and such that $G^{(0)}$ admits an invariant probability measure.  Then $G$ is finite.
\end{theorem}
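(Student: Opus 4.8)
The plan is to run the groupoid analogue of the classical fact that a discrete group which has property (T) and the Haagerup property must be finite, using Theorem~\ref{t to t} as the bridge that transports the hypotheses into the measured category, where the relevant incompatibility is available from the work of Zimmer \cite{Zimmer:2981ds} and Anantharaman-Delaroche \cite{Anantharaman-Delaroche:2005pb}.

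First I would extract what a-T-menability gives: by Tu's characterisation \cite{Tu:1999bq}, it supplies a continuous conditionally negative type function $\psi\colon G\to[0,\infty)$ which is proper; as $G^{(0)}$ is compact this means $\psi^{-1}([0,R])$ is compact for all $R>0$, so in particular $\{g\in G_x:\psi(g)\le R\}$ is finite for every $x$ (a closed discrete subset of a compact set). By Schoenberg's theorem the functions $\phi_t:=e^{-t\psi}$ are continuous positive type functions on $G$ in the sense of Definition~\ref{pt fun def} lying in $C_0(G)$, with $\phi_t\to 1$ uniformly on compact subsets of $G$ as $t\downarrow 0$. Next fix an ergodic invariant probability measure $\mu$ on $G^{(0)}$: the set of invariant probability measures is nonempty (it contains the given one), convex and weak-$*$ compact, hence has extreme points, which are ergodic. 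The restriction of $\psi$ then witnesses that the measured groupoid $(G,\mu)$ is a-T-menable in the sense of \cite{Anantharaman-Delaroche:2005pb,Zimmer:2981ds}, where fibrewise properness of $\psi$ is exactly what makes the associated measurable affine cocycle proper almost everywhere.

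By Theorem~\ref{t to t}, $(G,\mu)$ also has measured property (T). A measured groupoid that is both a-T-menable and has measured property (T) must be ``finite'' --- $G^x$ is finite for $\mu$-almost every $x$, equivalently the orbit $Gx=s(G^x)$ and the isotropy $G_x^x$ are finite for $\mu$-almost every $x$; this is the measured analogue of ``property (T) plus Haagerup implies finite'' and follows from the analysis of measured property (T) in \cite{Anantharaman-Delaroche:2005pb,Zimmer:2981ds} (measured property (T) forces the measurable affine action attached to $\psi$ to have almost everywhere bounded, hence finite, orbits). Now globalise using minimality: choose $x_0$ in the conull set where $G^{x_0}$ is finite. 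Then $Gx_0=s(G^{x_0})$ is a finite, hence closed, subset of $G^{(0)}$, and it is dense by minimality, so $Gx_0=G^{(0)}$ and $G^{(0)}$ is finite. Set $\Gamma:=G_{x_0}^{x_0}$, a finite group. For each $y,z\in G^{(0)}$, if $G_y^z\neq\emptyset$ then it is a translate $g_0 G_y^y$ of the isotropy at $y$, and $G_y^y$ is conjugate to $\Gamma$ via any element of $G_{x_0}^y$ (nonempty by minimality); hence $G_y^z$ is finite. Since $G^{(0)}$ is finite, $G=\bigsqcup_{y,z\in G^{(0)}}G_y^z$ is a finite union of finite sets, so $G$ is finite.

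The main obstacle is the measured incompatibility input at the start of the third paragraph: identifying the right (metric) properness condition for the measurable affine action attached to $\psi$, checking it descends from topological properness, and genuinely extracting ``$\mu$-almost everywhere finite fibres'' --- rather than merely ``no almost invariant unit sections implies an almost everywhere bounded cocycle'' --- from measured property (T) together with measured a-T-menability; finiteness of the isotropy can alternatively be obtained from the facts that isotropy groups of a property (T) groupoid have property (T) and those of an a-T-menable groupoid are a-T-menable, reducing to the group case. A more self-contained route would bypass the measured category: integrating the $\phi_t$ against $\mu$ produces representations $(H_t,\pi_t)$ of $C_c(G)$ with unit vectors $\xi_t$ satisfying $\langle\xi_t,\pi_t(f)\xi_t\rangle=\int_{G^{(0)}}\sum_{g\in G^x}f(g)\phi_t(g)\,d\mu(x)$, and applying the Kazhdan inequality to the non-constant part of $\xi_t$, together with the vanishing $\tau_\mu(f)1=\tau_\mu(\Psi(f))1$ in the trivial representation, shows $\xi_t$ lies within $o(1)$ of the constant vectors as $t\downarrow 0$; the difficulty there is squeezing boundedness of $\psi$ --- equivalently compactness of $G$ --- out of this estimate, after which $G$ compact (whence each $G^x$, being compact and discrete, is finite) and minimal would again force $G$ finite by the combinatorial argument above.
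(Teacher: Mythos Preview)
Your primary route---passing through Theorem~\ref{t to t} into the measured category and invoking a measured incompatibility result from \cite{Anantharaman-Delaroche:2005pb,Zimmer:2981ds}---is genuinely different from what the paper does. Two issues: first, Theorem~\ref{t to t} is stated under a standing second-countability hypothesis on $G$ (see the opening of that subsection), which Theorem~\ref{atmen t min} does not assume, so as written your argument only covers the second countable case. Second, the step you yourself flag as the ``main obstacle'' really is one: the precise measured statement you need (measured (T) plus measured a-T-menability plus ergodicity forces almost-everywhere finite fibres) is not something the paper proves or cites in a directly usable form, and your fallback of passing (T) and a-T-menability to isotropy groups would itself require proof.

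The paper stays entirely in the topological category. It introduces the auxiliary notion of a \emph{large} groupoid (Definition~\ref{gnc def}): $r|_{G\setminus K}$ surjects onto $G^{(0)}$ for every compact $K$. Lemma~\ref{min large} shows that minimal and infinite implies large; Theorem~\ref{no atmen and t} shows that a-T-menable, property (T), and an invariant probability measure together rule out largeness. These two combine to give the result. The proof of Theorem~\ref{no atmen and t} is in fact your ``self-contained'' alternative, completed: one builds the GNS-type representations $(H_t,\pi_t)$ from $\phi_t=e^{-tF}$ and $\mu$, shows the image of $\chi_{G^{(0)}}$ is almost invariant as $t\to 0$, and then uses largeness to show that no $(H_t,\pi_t)$ has a nonzero invariant vector---given a putative invariant unit vector approximated by $\eta\in C_c(G)$, largeness produces $f\in C_c(G)$ with $\Psi(f)=1$ supported so far from $\text{supp}(\eta)$ that properness of $F$ makes $\langle\eta,f\eta\rangle$ small, while invariance makes it close to $1$. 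So the missing ingredient in your second route is precisely the largeness device, not a direct compactness estimate.
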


Note that this result generalises the above-mentioned incompatibility of a-T-menability and property (T) in the group case.  See also \cite[Proposition 4.7]{Anantharaman-Delaroche:2005pb} for an analogous result in the measured context, where the minimality assumption is replaced by the related measure-theoretic assumption of ergodicity.  

In order to get to our main result, we need some definitions.  We start by recalling some definitions from Tu's work \cite[Section 3]{Tu:1999bq}.

\begin{definition}\label{neg type def}
Let $G$ be a groupoid.  A function $F:  G \rightarrow [0,\infty)$ is of \emph{negative type} if:
\begin{enumerate}[(i)]
\item $F(x)=0$ for all $x\in G^{(0)}$;
\item $F$ is symmetric, i.e. $F(g^{-1}) = F(g)$ for every $g\in G$;
\item for every finite tuple $g_1, ... , g_n$ in $G$ with the same range, and every tuple $a_1, ... , a_n$ of real numbers such that $\sum_j a_j=0$, 
\[ \sum_{i,j=1}^n a_i a_j F(g_i^{-1} g_j ) \leq 0.\] 
\end{enumerate}
\end{definition} 

\begin{definition}\label{atmen def}
A groupoid $G$ is \emph{a-T-menable} if there exists a continuous, proper\footnote{If we do not assume that $G^{(0)}$ is compact, `proper' should be replaced with `locally proper': a function $F:G\to [0,\infty)$ is \emph{locally proper} if for any compact subset $K$ of $G^{(0)}$, the restriction of $F$ to $G_K^K$ is proper in the usual sense.}, negative type function $F:G\to [0,\infty)$.
\end{definition}

Tu shows several useful facts about the class of a-T-menable groupoids in \cite[Section 3]{Tu:1999bq}: perhaps most relevant for us in terms of understanding the range of validity of Theorem \ref{no atmen and t} is that amenable groupoids are always a-T-menable \cite[Lemme 3.5]{Tu:1999bq}.

We need one more technical condition for the proof.

\begin{definition}\label{gnc def}
A groupoid $G$ is \emph{large} if for any compact subset $K$ of $G$ the restriction of the range map $r|_{G\setminus K}:G\setminus K\to G^{(0)}$ is surjective.
\end{definition}

Note that a large groupoid is automatically non-compact; in general, one should think of largeness as a fairly mild generalisation of non-compactness.  For example, it is straightforward to see that a transformation groupoid $X\rtimes\Gamma$ (with $X$ compact) is large if and only if $\Gamma$ is not finite, if and only if $X\rtimes \Gamma$ is not compact.  We also have the following result: it implies in particular that largeness and non-compactness are equivalent for minimal groupoids.

\begin{lemma}\label{min large}
Let $G$ be a minimal, infinite groupoid.  Then $G$ is large.
\end{lemma}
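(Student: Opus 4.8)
The plan is to argue by contradiction: assume $G$ is minimal and infinite but \emph{not} large, and deduce that $G$ must actually be finite. First I would unpack what the failure of largeness gives: there is a compact set $K\subseteq G$ and a point $x_0\in G^{(0)}$ with $x_0\notin r(G\setminus K)$, which says precisely that the range fibre $G^{x_0}=r^{-1}(x_0)$ is contained in $K$. Since $G$ is \'{e}tale, $G^{(0)}$ is clopen in $G$ and each range fibre is discrete and closed in $G$; hence $G^{x_0}$ is a closed discrete subset of the compact set $K$, and therefore $G^{x_0}$ is finite.

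The key step is then to show that the full $r$-preimage of the orbit $Gx_0$, namely $r^{-1}(Gx_0)=\bigcup_{x\in Gx_0}G^x$, is finite. For this I would use the elementary fibre identification: if $g\in G^{x_0}$ and $x=s(g)$, then the map $h\mapsto g^{-1}h$ is a bijection $G^{x_0}\to G^x$ with inverse $k\mapsto gk$ (a routine check using the paper's composition convention $s(g)=r(h)$), so $G^x=g^{-1}G^{x_0}$. Since every $x\in Gx_0$ is of the form $s(g)$ with $g\in G^{x_0}$, this yields $\bigcup_{x\in Gx_0}G^x\subseteq\bigcup_{g\in G^{x_0}}g^{-1}G^{x_0}$, a finite union of finite sets. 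Hence $r^{-1}(Gx_0)$ is finite.

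Finally I would invoke minimality together with a general topology fact. By minimality $Gx_0$ is dense in $G^{(0)}$; and since $r\colon G\to G^{(0)}$ is an open continuous surjection (\'{e}tale), the preimage $r^{-1}(Gx_0)$ of a dense set is dense in $G$ (any nonempty open $V\subseteq G$ has $r(V)$ nonempty open, so $r(V)$ meets $Gx_0$, so $V$ meets $r^{-1}(Gx_0)$). A dense finite subset of the Hausdorff space $G$ equals $G$, so $G$ is finite, contradicting the hypothesis. Therefore $G$ is large.

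I do not expect a serious obstacle. The points that need a little care are: (a) translating ``$r|_{G\setminus K}$ not surjective at $x_0$'' into ``$G^{x_0}\subseteq K$''; (b) writing the fibre bijection $G^{x_0}\cong G^x$ with the correct composition convention; and (c) the density of $r^{-1}(\text{dense})$ under an open map. The conceptual crux — which I would highlight — is simply that non-largeness pins one range fibre inside a compact set, hence makes it finite, and then minimality propagates finiteness to a dense (hence all) subset of $G$.
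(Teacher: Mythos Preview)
Your proof is correct and follows essentially the same route as the paper: both deduce from the failure of largeness that some $G^{x_0}\subseteq K$ is finite, that the orbit $Gx_0=s(G^{x_0})$ is therefore finite, and then combine minimality with the fibre bijections $G^{x_0}\cong G^x$ to force $G$ itself finite. The only cosmetic difference is where the density argument is applied: the paper argues that the finite orbit, being dense and closed in $G^{(0)}$, equals $G^{(0)}$ and then sums the finitely many finite fibres, whereas you lift the density to $G$ via openness of $r$ and conclude directly that the finite set $r^{-1}(Gx_0)$ equals $G$.
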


\begin{proof}
Assume for contradiction that $G$ is minimal and infinite, but that there is a compact $K\subseteq G$ and $x\in G^{(0)}$ such that $x\not\in r(G\setminus K)$.  It follows that $G^x\subseteq K$.  As $K$ is compact, this forces $G^x$ to be finite, and thus the orbit of $x$ under $G$ must be finite.  This contradicts minimality unless $G^{(0)}$ equals the finite orbit of $x$, so in particular $G^{(0)}$ is finite and $G$ acts transitively on it.  However, as $G^x$ is finite and $G$ acts transitively on $G^{(0)}$, this forces each range fibre to be finite.  Hence $G$ is finite, which is the desired contradiction.
\end{proof}

Here is the main result of this section.  

\begin{theorem}\label{no atmen and t}
Let $G$ be an a-T-menable groupoid with property (T), and an invariant probability measure $\mu$ on $G^{(0)}$.  Then $G$ is not large.  
\end{theorem}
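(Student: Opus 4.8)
The plan is to use the a-T-menable structure to manufacture, via a standard Schoenberg-type construction, a net of positive-type functions $\phi_t = e^{-tF}$ on $G$ that converges to $1$ uniformly on compact sets; then property (T) (through the equivalence with property (T$_2$), once one has it, or more safely by a direct spectral-gap argument) should force $\phi_t \to 1$ uniformly; and finally largeness, together with local properness of $F$, should be exactly what obstructs uniform convergence, giving the contradiction. Concretely, the first step is: given the continuous, locally proper, negative type $F:G\to[0,\infty)$ guaranteed by a-T-menability, set $\phi_t := e^{-tF}$ for $t>0$. Each $\phi_t$ is of positive type (Schoenberg: $e^{-tF}$ is positive type whenever $F$ is negative type, the groupoid computation being pointwise over each range fibre exactly as in the group case), satisfies $\phi_t(x)=1$ on $G^{(0)}$ since $F$ vanishes there, and is symmetric. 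Moreover $\phi_t \to 1$ uniformly on any compact $K\subseteq G$ as $t\to 0$, since $F$ is bounded on $K$.

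The second step is to extract from property (T) a quantitative spectral-gap statement and apply it to the GNS-type representations attached to the $\phi_t$. Since the paper has not proved (T$\Rightarrow$T$_2$) in general, I would argue directly: associate to each $\phi_t$ (and to the invariant probability measure $\mu$ on $G^{(0)}$, which exists by hypothesis and which by Corollary~\ref{inv meas} is precisely what lets $C_c(G)$ have a representation with nonzero constant vectors) a representation $(H_t,\pi_t)$ of $C_c(G)$ together with a distinguished unit "cyclic-like" vector $\xi_t$ whose matrix coefficient against the constant-vector part is governed by $\phi_t$; concretely one can take the integrated form of a representation of $G$ built from $\phi_t$, or use the trivial representation $(H_\mu,\tau_\mu)$ of Example~\ref{triv rep} twisted by $\phi_t$. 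The point is to show that if $\phi_t$ does \emph{not} tend to $1$ uniformly, then for a suitable sequence $t_k\to 0$ the vectors $\xi_{t_k}$, after projecting off the constant vectors, are nonzero, have norm bounded below, yet satisfy $\|f\xi_{t_k}-\Psi(f)\xi_{t_k}\| \to 0$ for every fixed $f\in C_c(G)$ supported in the compact Kazhdan set $K_0$ — because on $K_0$ we have $\phi_{t_k}\to 1$ uniformly, so the "almost invariance" on $K_0$ improves. This contradicts the definition of a Kazhdan pair $(K_0,c)$.

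The third step is to show largeness forces $\phi_t \not\to 1$ uniformly. Suppose $\phi_t \to 1$ uniformly, i.e. $\sup_{g\in G}|1-e^{-tF(g)}|\to 0$, equivalently $\sup_{g\in G}F(g) \to 0$ is false in the relevant sense; rather, for each $t$, $tF$ is uniformly small, which forces $F$ to be \emph{bounded} on all of $G$: if $\sup_G F = \infty$ then $\sup_G(1-e^{-tF})=1$ for every $t>0$. So uniform convergence is equivalent to $F$ being bounded. But $F$ bounded together with $F$ locally proper forces, for the (compact) base space $G^{(0)}$, that $G = G_{G^{(0)}}^{G^{(0)}}$ has $F|_G$ proper with bounded range, hence $\{g : F(g)\le \sup_G F\} = G$ is relatively compact — i.e. $G$ is compact. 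A compact groupoid is visibly not large (largeness forces non-compactness, as the paper notes), contradiction. Assembling: a-T-menable plus large gives an unbounded locally proper negative-type $F$, hence a net $\phi_t$ of positive-type functions converging to $1$ uniformly on compacta but not uniformly, contradicting property (T) via the spectral-gap argument of step two.

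The main obstacle I expect is step two: cleanly linking "$\phi_t$ of positive type, $\to 1$ on compacta" to "almost-invariant vectors in $H_\pi$ for a genuine representation of $C_c(G)$" in the groupoid setting, since — as the paper stresses — the constant vectors $H^\pi$ do \emph{not} form a subrepresentation, so one cannot simply restrict to $H_\pi$ and quote the group argument. One must use the invariant measure $\mu$ to build the right representation (the integrated form, or a suitable GNS construction over $C_c(G)$ with respect to the state $f\mapsto \int \Psi(f)\phi_t\, d\mu$ or similar), verify that the constant-vector projection behaves as expected, and control the cross terms. This is where the real work lies; the Schoenberg computation (step one) and the boundedness/properness dichotomy (step three) are routine by comparison.
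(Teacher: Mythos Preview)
Your outline has a genuine gap, and it is fatal rather than merely technical.

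Observe first that in your step three you only use largeness through its weakest consequence: a large groupoid is non-compact, and hence (via local properness of $F$ on $G=G_{G^{(0)}}^{G^{(0)}}$ with compact base) $F$ is unbounded, so $\phi_t\not\to 1$ uniformly. Nothing stronger about largeness is invoked anywhere. Thus if your steps one and two went through, you would actually prove the stronger statement that an a-T-menable property (T) groupoid with compact base and invariant probability measure must be \emph{compact}. But Example~\ref{atmen t counterex} (the second paragraph) produces exactly such a groupoid that is not compact: take $G=(X\rtimes F_2)\sqcup\{pt\}$ with the Dirac mass at the isolated point. Your scheme would therefore prove something false, so step two must break.

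And it does break, precisely where you flagged it. Writing $\eta_t$ for the projection of $\xi_t$ onto $(H_t)_\pi$, your almost-invariance claim $\|f\eta_t-\Psi(f)\eta_t\|\to 0$ is actually fine: the $H^{\pi_t}$-component of $\xi_t$ is constant, hence killed by $f-\Psi(f)$. The problem is the lower bound on $\|\eta_t\|$. Non-uniform convergence of $\phi_t$ on $G$ is a \emph{global} pointwise statement, whereas the GNS space $H_t$ is built from the measure $\mu$ and only sees the support of $r^*\mu$. In the example above $\mu$ is the Dirac mass at $\{pt\}$, the function $F$ vanishes there, and one checks that $\xi_t$ is itself a constant vector; so $\eta_t=0$ for every $t$, despite $\phi_t\not\to 1$ uniformly on the $X\rtimes F_2$ part. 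There is no route from ``$\phi_t\not\to 1$ uniformly'' to ``$\|\eta_t\|\geq c>0$'' without further input.

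The paper's argument avoids this entirely by using largeness in a much more specific way. Via Lemma~\ref{large lem}, largeness produces for any compact $K\subseteq G$ a function $f\in C_c(G)$ supported in $G\setminus K$ with $\Psi(f)\equiv 1$. One then shows directly (the core computation at the end of the proof of Theorem~\ref{no atmen and t}) that this $f$, combined with the decay of $\phi_t$ coming from properness of $F$, forces $(H_t,\pi_t)$ to have \emph{no} nonzero constant vectors at all. Hence $H_t=(H_t)_\pi$, the unit vector $\xi_t$ already lies in $(H_t)_\pi$, and its almost-invariance (your step one, made precise in Lemma~\ref{alm inv}) directly contradicts the Kazhdan pair. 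So largeness enters not to make $F$ unbounded, but to kill the constant vectors in the GNS representations; this is exactly what fails in the counterexample, where the isolated point with full measure supports a constant vector.
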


Note that this result together with Lemma \ref{min large} imply Theorem \ref{atmen t min}.  We discuss the failure of some stronger  statements in Example \ref{atmen t counterex} below.

In order to prove Theorem \ref{no atmen and t}, we need some basic facts about positive type functions (see Definition \ref{pt fun def}) on groupoids, and the associated GNS-type representations.  Let then $\mu$ be an invariant probability measure on $G^{(0)}$ and $\phi:G\to \C$ be a bounded, Borel, positive type function. We define an inner product on $C_c(G)$ by the formula
$$
\langle \xi,\eta\rangle_\phi:=\int_{G^{(0)}} \sum_{g,h\in G^x} \overline{\xi(g)}\eta(h)\phi(g^{-1}h)d\mu(x).
$$
The fact that $\phi$ is of positive type implies that the sum 
$$
\sum_{g,h\in G^x} \overline{\xi(g)}\xi(h)\phi(g^{-1}h)
$$
is non-negative for all $x\in G^{(0)}$, and thus that the form above is positive semidefinite.  Hence we may define a Hilbert space $H_\phi$ to be the separated completion of $C_c(G)$ for the above inner product.

The following lemma is presumably well-known (compare also Remark \ref{disint rem} below).  However, we could not find what we needed in the literature so give a proof for the reader's convenience.

\begin{lemma}\label{ccg act on hmu}
Let $\phi:G\to \C$ be a bounded, Borel, positive type function on a groupoid $G$, and let $\mu$ be an invariant probability measure on $G^{(0)}$.  Then with notation as above, the left convolution action of $C_c(G)$ induces a well-defined representation $\pi_\phi$ of $C_c(G)$ on $H_\phi$ by bounded operators.
\end{lemma}

\begin{proof}
Note that a general element $f\in C_c(G)$ is a finite sum of elements supported on bisections.  Hence to prove that the convolution action of a general $f\in C_c(G)$ on $H_\phi$ is well-defined and by bounded operators, it suffices to prove this for some $f\in C_c(G)$ supported on a single bisection.

Let $f,\xi\in C_c(G)$ with $f$ supported on a single bisection, and let us compute $\langle f\xi,f\xi\rangle_\phi$, where the product is convolution.  We have that 
$$
\langle f\xi,f\xi\rangle_\phi=\int_{G^{(0)}} \sum_{g,h,k,l\in G^x} \overline{f(k)\xi(k^{-1}g)}f(l)\xi(l^{-1}h)\phi(g^{-1}h)d\mu(x).
$$  
As $f$ is supported on a bisection, it can be non-zero on at most one point in $G^x$; hence we may replace the sums over $k$ and $l$ in the above by a single sum in $k$, getting 
$$
\langle f\xi,f\xi\rangle_\phi=\int_{G^{(0)}} \sum_{g,h,k\in G^x} |f(k)|^2\overline{\xi(k^{-1}g)}\xi(k^{-1}h)\phi(g^{-1}h)d\mu(x).
$$
Making the substitutions $m=k^{-1}g$ and $n=k^{-1}h$, we get
$$
\langle f\xi,f\xi\rangle_\phi=\int_{G^{(0)}} \sum_{k\in G^x}\sum_{m,n\in G^{s(k)}} |f(k)|^2\overline{\xi(m)}\xi(n)\phi(m^{-1}n)d\mu(x).
$$
Now the right hand side above is the integral of the function 
$$
G\to \C, \quad k\mapsto \sum_{m,n\in G^{s(k)}} |f(k)|^2\overline{\xi(m)}\xi(n)\phi(m^{-1}n)
$$
with respect to the measure $r^*\mu$.  Hence by invariance of $\mu$ it equals the same integral with respect to $s^*\mu$, i.e.\
\begin{align*}
\langle f\xi,f\xi\rangle_\phi & =\int_{G^{(0)}} \sum_{k\in G_x}\sum_{m,n\in G^{s(k)}} |f(k)|^2\overline{\xi(m)}\xi(n)\phi(m^{-1}n)d\mu(x) \\
&=\int_{G^{(0)}} \Big(\sum_{k\in G_x}|f(k)|^2\Big)\Big(\sum_{m,n\in G^{x}} \overline{\xi(m)}\xi(n)\phi(m^{-1}n)\Big)d\mu(x) 
\end{align*}
Note now that as $f$ is supported in a bisection, we have that $\sum_{k\in G_x}|f(k)|^2\leq \|f\|_\infty^2$.  Hence we now have that 
$$
\langle f\xi,f\xi\rangle_\phi \leq \|f\|_\infty^2\int_{G^{(0)}} \sum_{m,n\in G^{x}} \overline{\xi(m)}\xi(n)\phi(m^{-1}n)d\mu(x) =\|f\|^2_\infty\langle \xi,\xi\rangle_\phi.
$$
This proves both that the action of $f$ on $H_\phi$ is well-defined, and that it is by a bounded operator.

It remains to prove that $\pi_\phi$ is a $*$-representation.  Linearity is clear, and multiplicativity follows from associativity of multiplication on $C_c(G)$, so it remains to check that $\pi_\phi$ is $*$-preserving.  We compute that for $f,\xi,\eta\in C_c(G)$,
$$
\langle \xi,f\eta\rangle_\phi=\int_{G^{(0)}}\sum_{g,h,k\in G^x}\overline{\xi(g)}f(k)\eta(k^{-1}h)\phi(g^{-1}h)d\mu(x).
$$
Making the substitutions $l=k^{-1}h$ and $m=k^{-1}g$, this equals
$$
\int_{G^{(0)}}\sum_{k\in G^x}\sum_{l,m\in G^{s(k)}}\overline{\xi(km)}f(k)\eta(l)\phi(m^{-1}l)d\mu(x).
$$
Using invariance of $\mu$ again we have
\begin{align*}
\int_{G^{(0)}} & \sum_{k\in G_x}\sum_{l,m\in G^{s(k)}}\overline{\xi(km)}f(k)\eta(l)\phi(m^{-1}l)d\mu(x) \\ & =\int_{G^{(0)}}\sum_{k\in G_x}\sum_{l,m\in G^{x}}\overline{\xi(km)}f(k)\eta(l)\phi(m^{-1}l)d\mu(x).
\end{align*}
On the other hand, $f(k)=\overline{f^*(k^{-1})}$, so this becomes
$$
\int_{G^{(0)}}\sum_{l,m\in G^{x}}\sum_{k\in G_x}\overline{f^*(k^{-1})\xi(km)}\eta(l)\phi(m^{-1}l)d\mu(x).
$$
The sum $\sum_{k\in G_x}\overline{f^*(k^{-1})\xi(km)}$ is just the complex conjugate of the convolution product of $f^*$ and $\xi$ evaluated at $m$, however, so this equals 
$$
\int_{G^{(0)}}\sum_{l,m\in G^{x}}\overline{(f^*\xi)(m)}\eta(l)\phi(m^{-1}l)d\mu(x)=\langle f^*\xi,\eta\rangle_\phi
$$
and we are done.
\end{proof}

\begin{remark}\label{disint rem}
We could also have deduced the above lemma from general theory, at least in the case that $G$ is second countable.  Indeed, for each $x\in G^{(0)}$ we may define a positive definite sesquilinear form on $C_c(G^x)$ by the formula
$$
\langle \xi,\eta \rangle_{x}:=\sum_{g,h\in G^x} \overline{\xi(g)}\eta(h)\phi(g^{-1}h),
$$
and so a Hilbert space $H_x$.   We then equip the collection $H=\{H_x\}_{x\in G^{(0)}}$ with the fundamental space of sections given by the image of $C_c(G)$; if $G$ is second countable, this makes $H$ into a measurable field of Hilbert spaces in the sense of  \cite[Definition 1.3.12]{Renault:2009zr} (or equivalently, a Hilbert bundle in the sense of \cite[Definition 2.2]{Anantharaman-Delaroche:2005pb}, as already used in Definition \ref{rep} above).  

We then equip $H$ with a representation $\pi$ of $G$ in the sense of \cite[Definition 2.3.12]{Renault:2009zr}, or equivalently of Definition \ref{rep} above, by defining for each $g\in G$ 
$$
\pi_g:H_{s(g)}\to H_{r(g)},\quad (\pi_g\xi)(h):=\xi(g^{-1}h).
$$
The reader can then verify for themselves that the integrated form of this representation (see \cite[Section 2.3.3]{Renault:2009zr}) agrees with $(H_\phi,\pi_\phi)$ as defined above.  We instead went via Lemma \ref{ccg act on hmu} as this seemed a little more direct, and as it does not require any separability assumptions on $G$.
\end{remark}

Let us now go back to the assumptions of Theorem \ref{no atmen and t}.  As $G$ is a-T-menable with compact base space, there exists a continuous, proper negative type function $F:G\to [0,\infty)$ as in Definition \ref{neg type def}.  It follows from Schoenberg's theorem (see for example \cite[Theorem C.3.2]{Bekka:2000kx}) that for each $t>0$ the function 
$$
\phi_t:G\to \R,\quad g\mapsto e^{-tF(g)}
$$
is positive type.  Hence we may form the Hilbert spaces $H_t:=H_{\phi_t}$ and representations $\pi_t:=\pi_{\phi_t}$ of $C_c(G)$ as in the discussion above.  

\begin{lemma}\label{alm inv}
With notation as above, the representations $(H_t,\pi_t)$ have the following property.  Let $\xi$ denote the image in $H_t$ of the characteristic function of the base space.  For all $\epsilon>0$ and compact subsets $K$ of $G$ there exists $T>0$ such that for all $t\in (0,T]$ and all $f\in C_c(G)$ with $\|f\|_I\leq1$ we have that 
$$
\|f\xi-\Psi(f)\xi\|_{H_t}<\epsilon.
$$
\end{lemma}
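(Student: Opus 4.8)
The plan is to first identify the vector $\xi$ explicitly.  Since $G^{(0)}$ is compact and clopen in $G$, the characteristic function $\chi:=\chi_{G^{(0)}}$ is the unit of the convolution algebra $C_c(G)$: for $f\in C_c(G)$ one computes $(\chi f)(g)=f(r(g))\cdot$ wait — more precisely $(\chi f)(g)=f(g)$ and $(f\chi)(g)=f(g)$ directly from the definition of convolution, as the only composable pairs involving a unit are the trivial ones.  Hence $f\xi=f$ and, since $\Psi(f)\in C(G^{(0)})\subseteq C_c(G)$ (extended by zero), also $\Psi(f)\xi=\Psi(f)$.  So $f\xi-\Psi(f)\xi=\eta$ where $\eta:=f-\Psi(f)$.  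Moreover $\Psi$ restricts to the identity on $C(G^{(0)})$, so $\Psi(\eta)=\Psi(f)-\Psi(\Psi(f))=0$; equivalently $\sum_{g\in G^x}\eta(g)=0$ for every $x\in G^{(0)}$.  (One also notes in passing that $\|\xi\|_{H_t}=1$, since $\langle\chi,\chi\rangle_{\phi_t}=\mu(G^{(0)})=1$.)

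Next I would use that the constant value of the form at the ``parameter $t=0$'' vanishes on $\eta$.  Concretely, $\int_{G^{(0)}}\sum_{g,h\in G^x}\overline{\eta(g)}\eta(h)\,d\mu(x)=\int_{G^{(0)}}\big|\Psi(\eta)(x)\big|^2\,d\mu(x)=0$, so that
\[ \|f\xi-\Psi(f)\xi\|_{H_t}^2=\|\eta\|_{\phi_t}^2=\int_{G^{(0)}}\sum_{g,h\in G^x}\overline{\eta(g)}\eta(h)\big(\phi_t(g^{-1}h)-1\big)\,d\mu(x). \]
Now the estimate: $\eta$ is supported in the compact set $L:=K\cup G^{(0)}$ (using compactness of $G^{(0)}$), so for fixed $x$ the only contributing pairs have $g,h\in G^x\cap L$, a finite set, and then $g^{-1}h$ lies in $M:=L^{-1}L$, which is compact since it is the image of the compact set $\{(g,h)\in L\times L:r(g)=r(h)\}$ under the continuous map $(g,h)\mapsto g^{-1}h$.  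Since $F$ is continuous, $C:=\sup_{m\in M}F(m)<\infty$, whence $|\phi_t(g^{-1}h)-1|=1-e^{-tF(g^{-1}h)}\leq tF(g^{-1}h)\leq tC$ for $g^{-1}h\in M$.  On the other hand $\|f\|_I\leq 1$ gives $\sum_{g\in G^x}|f(g)|\leq 1$ and $|\Psi(f)(x)|\leq 1$, so $\sum_{g\in G^x}|\eta(g)|\leq 2$ and hence $\int_{G^{(0)}}\big(\sum_{g\in G^x}|\eta(g)|\big)^2\,d\mu(x)\leq 4$, as $\mu$ is a probability measure.  Combining these bounds in the displayed identity gives $\|\eta\|_{\phi_t}^2\leq 4Ct$, so taking $T:=\epsilon^2/(4C)$ — which depends only on $\epsilon$ and $K$, not on $f$ or $t$ — yields $\|f\xi-\Psi(f)\xi\|_{H_t}<\epsilon$ for all $t\in(0,T]$.

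There is no real conceptual obstacle here: the whole content is that $\phi_t\to 1$ uniformly on the compact set $M$ at a linear rate governed by $\sup_M F$, coupled with the $\ell^1$-type control supplied by the $I$-norm.  The points requiring a little care are the bookkeeping of supports (confirming $\chi$ is the unit and evaluating $f\chi$, $\Psi(f)\chi$; that $G^x\cap L$ is finite, being a closed discrete subset of a compact set; and that $M=L^{-1}L$ is compact), rather than any delicate analysis.
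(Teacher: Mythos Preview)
Your proof is correct and follows essentially the same approach as the paper's: both recognize that $\xi$ is the convolution unit so that $f\xi-\Psi(f)\xi=f-\Psi(f)$ as elements of $C_c(G)$, expand the GNS norm, and use that $\phi_t\to 1$ uniformly on compact subsets of $G$. Your bookkeeping is slightly slicker --- by exploiting $\Psi(\eta)=0$ you reduce to the single factor $\phi_t(g^{-1}h)-1$, whereas the paper expands in terms of $\overline{f(g)}f(h)$ and obtains the four-term combination $\phi_t(g^{-1}h)-\phi_t(g^{-1})-\phi_t(h)+\phi_t(x)$ --- and as a bonus you extract the explicit linear rate $\|f\xi-\Psi(f)\xi\|_{H_t}^2\leq 4Ct$, but the underlying idea is identical.
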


\begin{proof}
We compute that 
\begin{align*}
\|f\xi-\Psi(f)\xi\|^2_{H_t} & =\langle f\xi-\Psi(f)\xi,f\xi-\Psi(f)\xi\rangle_{H_t} \\ & =\int_{G^{(0)}}\sum_{g,h\in G^x} \overline{(f\xi-\Psi(f)\xi)(g)}(f\xi-\Psi(f)\xi)(h)\phi_t(g^{-1}h)d\mu(x).
\end{align*}
Using that $\xi$ is the identity for convolution, this equals 
$$
\int_{G^{(0)}}\sum_{g,h\in G^x} \overline{f(g)}f(h)\big(\phi_t(g^{-1}h)-\phi_t(g^{-1})-\phi_t(h)+\phi_t(x)\big)d\mu(x).
$$
As $\mu$ is a probability measure, the absolute value of this is bounded above by 
\begin{align*}
\sup_{g,h\in K}|\phi_t(g^{-1}h) & -\phi_t(g^{-1})-\phi_t(h)+\phi_t(x)|\sum_{g,h\in G^x} |\overline{f(g)}f(h)| \\  & \leq \sup_{g,h\in K}|\phi_t(g^{-1}h)-\phi_t(g^{-1})-\phi_t(h)+\phi_t(x)|\|f\|_\infty \|f\|_I \\ & \leq \sup_{g,h\in K}|\phi_t(g^{-1}h)-\phi_t(g^{-1})-\phi_t(h)+\phi_t(x)|.
\end{align*}
As $K$ is compact and $\phi_t(g)=e^{-tF(g)}$, all four terms in the last expression can be made to be within $\epsilon/4$ of $1$ for $t$ suitably small (depending only on the fixed function $F$, and $K$ and $\epsilon$), so we have the result.
\end{proof}

We need one more ancillary lemma that will let us use largeness.

\begin{lemma}\label{large lem}
A groupoid $G$ is large if and only if for every compact subset $K$ of $G$ there exists $f\in C_c(G)$ with support in $G\setminus K$, values in $[0,1]$, and with $\Psi(f):G^{(0)}\to \C$ equal to the constant function with value one.
\end{lemma}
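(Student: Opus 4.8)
The plan is to prove the two implications separately; the ``if'' direction is immediate, and the work is all in the ``only if'' direction, where I would build the required function by a partition-of-unity argument over a finite cover of $G^{(0)}$ by ranges of open bisections avoiding $K$.

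For the easy direction, suppose the stated condition holds, fix a compact set $K\subseteq G$ and a point $x\in G^{(0)}$, and choose $f\in C_c(G)$ with support in $G\setminus K$, values in $[0,1]$, and $\Psi(f)$ the constant function $1$. Then $\sum_{g\in G^x}f(g)=\Psi(f)(x)=1$, so some $g\in G^x$ has $f(g)\neq 0$; this $g$ lies in $G\setminus K$ and has $r(g)=x$. Since $x$ was arbitrary, $r|_{G\setminus K}$ is surjective, so $G$ is large.

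For the converse, fix a compact $K\subseteq G$. Largeness gives, for each $x\in G^{(0)}$, an element $g_x\in G\setminus K$ with $r(g_x)=x$; since $G$ is \'etale and $G\setminus K$ is open, I may choose an open bisection $B_x$ with $g_x\in B_x\subseteq G\setminus K$. As $r$ is open, $U_x:=r(B_x)$ is an open neighbourhood of $x$, and $\{U_x\}_{x\in G^{(0)}}$ covers the compact space $G^{(0)}$; extract a finite subcover $U_1,\dots,U_n$ with corresponding bisections $B_1,\dots,B_n\subseteq G\setminus K$. Since $G^{(0)}$ is compact Hausdorff, hence normal, pick continuous $\rho_1,\dots,\rho_n:G^{(0)}\to[0,1]$ with $\mathrm{supp}\,\rho_i\subseteq U_i$ and $\sum_i\rho_i\equiv 1$. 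Define $f_i:G\to[0,1]$ by $f_i:=\rho_i\circ(r|_{B_i})$ on $B_i$ and $f_i:=0$ off $B_i$, and set $f:=\sum_{i=1}^n f_i$. Then $f$ is supported in $\bigcup_i B_i\subseteq G\setminus K$; for any $g\in G$ we have $f(g)=\sum_{i:\,g\in B_i}\rho_i(r(g))\le\sum_{i=1}^n\rho_i(r(g))=1$, so $f$ is $[0,1]$-valued with no need for normalisation; and since each $f_i$ is supported on the bisection $B_i$, for each $x\in G^{(0)}$ only the point $g=(r|_{B_i})^{-1}(x)$ contributes (and only when $x\in U_i$), giving $\sum_{g\in G^x}f_i(g)=\rho_i(x)$ in all cases, whence $\Psi(f)(x)=\sum_i\rho_i(x)=1$.

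The one point that needs care, and which I expect to be the main (though still routine) obstacle, is verifying that each $f_i$ is genuinely an element of $C_c(G)$: its nonvanishing set is contained in $(r|_{B_i})^{-1}(\mathrm{supp}\,\rho_i)$, which is a homeomorphic image of a compact set, hence compact and closed in $G$ and contained in $B_i$, so $f_i$ has compact support inside $G\setminus K$; and $f_i$ is continuous because it agrees with the continuous function $\rho_i\circ r$ on the open set $B_i$ and with $0$ on the open complement of that compact set, and these two opens cover $G$. Everything else is a direct computation.
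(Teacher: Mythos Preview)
Your proof is correct and follows essentially the same route as the paper's: both directions match, with the converse built by choosing open bisections in $G\setminus K$ whose ranges cover $G^{(0)}$, taking a partition of unity subordinate to that cover, and lifting each piece back along $r$ to the corresponding bisection. If anything, you are more careful than the paper in explicitly verifying that each $f_i$ lies in $C_c(G)$ and that $f$ is $[0,1]$-valued; the paper simply asserts these facts and leaves them to the reader.
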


\begin{proof}
If $K\subseteq G$ is compact, and $f\in C_c(G)$ is a function as in the statement, then we have that 
$$
1=\Psi(f)(x)=\sum_{g\in G^x}f(g)=\sum_{g\in G^x\setminus K}f(g)
$$
for all $x\in G^{(0)}$.  Hence in particular $G^x\setminus K$ must be non-empty for each $x$, which is largeness.

Conversely, assume $G$ is large.  Then for each $x\in G^{(0)}$, we may choose an open bisection $B_x\subseteq G\setminus K$ such that $r(B_x)\owns x$.  As $G^{(0)}$ is compact, we may take a finite subcover $\{r(B_{x_1}),...,r(B_{x_n})\}$ of the cover $\{r(B_x)\mid x\in G^{(0)}\}$ of $G^{(0)}$.  Choose a partition of unity $\{\phi_i:G^{(0)}\to [0,1]\mid i\in \{1,...,n\}\}$ on $G^{(0)}$ such that $\phi_i^{(0)}$ has compact support contained in $r(B_{x_i})$.  Define moreover $f_i:G\to [0,1]$ by 
$$
f_i(g) = \left\{\begin{array}{ll} \phi_i(r(g)) & g\in B_{x_i} \\ 0 & \text{ otherwise}\end{array}\right.
$$
Then each $f_i$ is continuous and compactly supported.  Define finally $f:=\sum_{i=1}^n f_i$.  It is not too difficult to see that this $f$ has the properties required by the statement, so we are done.
\end{proof}

We are now ready to complete the proof of Theorem \ref{no atmen and t}.

\begin{proof}[Proof of Theorem \ref{no atmen and t}]
With notation as above, let us assume for contradiction that $G$ is a-T-menable, has property (T), that $G^{(0)}$ is equipped with an invariant probability measure $\mu$, and that $G$ is large.  To derive a contradiction, it will be sufficient to prove that no representation $(H_t,\pi_t)$ has a non-zero invariant vector.  Indeed, Lemma \ref{alm inv} then contradicts property (T). 

Let us then assume for contradiction that some $(H_t,\pi_t)$ does have an invariant unit vector, say $\xi$.  Choose $\eta\in C_c(G)$ so that $\|\xi-\eta\|_{H_t}<1/4$.  
Let 
\begin{equation}\label{m def}
m:=4\|\eta\|_I\|\eta\|_\infty,
\end{equation}

Let $N$ be the support of $\eta$ and choose a compact subset $K$ such that $|\phi_t(g)|<1/ m$ for all $g\in N^{-1}\cdot (G\setminus K)\cdot N$; this is possible by properness of $F$, by compactness of $N$, and by the fact that $\phi_t(g)=e^{-tF(g)}$ for all $g\in G$.  Let $f\in C_c(G)$ be as in the definition of largeness for this $K$.  

Now, on the one hand, using invariance of $\xi$ we get 
\begin{align}\label{lower bound}
|\langle \eta,f\eta\rangle|> |\langle \xi,f\xi\rangle|-2/4 =  |\langle \xi,\Psi(f)\xi\rangle| -1/2=\|\xi\|^2-1/2=1/2.
\end{align}
On the other hand, 
$$
\langle \eta,f\eta\rangle=\int_{G^{(0)}}\sum_{g,h,k\in G^x}\overline{\eta(g)}f(k)\eta(k^{-1}h)\phi_t(g^{-1}h)d\mu(x).
$$
For the expression $\overline{\eta(g)}f(k)\eta(k^{-1}h)\phi_t(g^{-1}h)$ to be non-zero, we must have that $k\in G\setminus K$, that $k^{-1}h$ is in $N$ and that $g^{-1}\in N^{-1}$, whence $h\in k\cdot N\subseteq (G\setminus K)\cdot N$, and so $g^{-1}h$ is in $N^{-1}\cdot (G\setminus K)\cdot N$; hence whenever this expression is non-zero, we have that $|\phi_t(g^{-1}h)|<1/m$.  It follows that 
$$
|\langle \eta,f\eta\rangle|\leq  \frac{1}{m}\int_{G^{(0)}}\sum_{g,h,k\in G^x}|\overline{\eta(g)}||f(k)\eta(k^{-1}h)|d\mu(x)
$$
The $\sum_{h,k\in G^x}|f(k)\eta(k^{-1}h)|$ is bounded above by $\|\eta\|_\infty\|f\|_I$, and the assumptions on $f$ imply that $\|f\|_I=1$.  Hence we get
$$
|\langle \eta,f\eta\rangle|\leq \frac{1}{m}\|\eta\|_\infty\int_{G^{(0)}}\sum_{g\in G^x}|\overline{\eta(g)}|d\mu(x).
$$
The expression $\int_{G^{(0)}}\sum_{g\in G^x}|\overline{\eta(g)}|d\mu(x)$ is bounded above by the $I$-norm of $\eta$, and thus by definition of $m$ (line \eqref{m def}) we get 
$$
|\langle \eta,f\eta\rangle|\leq 1/4.
$$
This contradicts line \eqref{lower bound}, however, completing the proof.
\end{proof}

We conclude this section with an example showing that the existence of an invariant probability measure is necessary on Theorem \ref{no atmen and t}, and that one cannot in general conclude that $G$ is compact under the same hypotheses (as opposed to the weaker conclusion that $G$ is not large).

\begin{example}\label{atmen t counterex}
Let $\Gamma$ be the free group on two generators acting on its Gromov, or ideal, boundary $X$; see for example \cite[Section 5.1]{Brown:2008qy} for a direct treatment of this.  Let $G=X\rtimes \Gamma$ be the associated transformation groupoid.  As $\Gamma$ is a-T-menable, it is not difficult to show that $G$ is a-T-menable.  Moreover, the action of $\Gamma$ on $X$ is amenable (see for example \cite[Section 5.1]{Brown:2008qy} again) whence $C^*_{\max}(G)=C^*_r(G)$ (see for example \cite[Corollary 5.6.17]{Brown:2008qy}).  It follows from this and the canonical identification $C^*_r(G)=C(X)\rtimes_r\Gamma$ that the natural inclusion $\C[\Gamma]\to C^*_{\max}(G)$ extends to an inclusion $C^*_r(\Gamma)\to C^*_{\max}(G)$.  This implies that the collection $\mathcal{U}_X$ of Definition \ref{ux def} consists of representations of $\Gamma$ that extend to $C^*_r(\Gamma)$.  As $\Gamma$ is not amenable, $\Gamma$ therefore has property (T) with respect to $\mathcal{U}_X$ in the sense of Definition \ref{iso def}.  Thanks to Proposition \ref{trans t}, we may conclude therefore that $X\rtimes \Gamma$ has property (T).

To summarise, if $G$ is the transformation groupoid associated to the action of the free group $F_2$ on its Gromov boundary, then $G$ is a-T-menable and has property (T); it is also large, as this is true for any transformation groupoid $X\rtimes \Gamma$ with $X$ compact and $\Gamma$ infinite.  Hence the existence of an invariant probability measure is needed in Theorem \ref{no atmen and t}.  Moreover, as is well-known (and not difficult to check directly from the description given in \cite[Section 5.1]{Brown:2008qy}), $G$ is a minimal groupoid, so the existence of an invariant probability measure is also necessary in Theorem \ref{atmen t min}.

We may also use this example to build a non-compact groupoid $G$ which is a-T-menable, has property (T), and for which there exists an invariant probability measure; thus we cannot get the stronger conclusion that $G$ is non-compact in Theorem \ref{no atmen and t}.  Indeed, let $\Gamma$ and $X$ be as before, let $\{pt\}$ be the trivial groupoid with base space a single point, and let $G=X\rtimes \Gamma \bigsqcup \{pt\}$ be the disjoint union with the natural groupoid operations.  Then using the discussion above it is not difficult to see that $G$ is a-T-menable and property (T).  It is not compact as $X\rtimes \Gamma$ is not compact, and it has an invariant probability measure given by the Dirac mass on the trivial point.  
\end{example}

\section{Kazhdan projections}\label{kaz sec}

Throughout this section, $G$ denotes a groupoid.  As usual, groupoids will be locally compact, Hausdorff, and \'{e}tale, and have compact base space: see Convention \ref{gpd conv}.

In this section we will use property (T) for a groupoid $G$ to construct so-called \emph{Kazhdan projections} in  $C^*_{\max}(G)$, and explore some connections to exactness properties of $C^*_r(G)$ and the Baum-Connes conjecture.  The analogous classical result in the group case is due to Akemann and Walter \cite{Akemann:1981dz}.  See also Valette's paper \cite{Valette:1984wy}: Theorem 3.2 from this paper is one motivation for our approach to constructing Kazhdan projections.  

Kazhdan projections are interesting partly as (other than in trivial cases) they give examples of projections in $C^*_{\max}(G)\setminus C_c(G)$; these projections are thus quite exotic in some sense, and exist for `analytic' as opposed to `algebraic' reasons.  

Another reason Kazhdan projections are interesting is due to their connections to the Baum-Connes conjecture and exactness.  This was exploited to great effect by Higson, Lafforgue, and Skandalis in their construction of counterexamples to the Baum-Connes conjecture \cite{Higson:2002la}; part of the motivation for what we do here is to try to better understand some of the ideas in their work.

In the group case, Kazhdan projections usually refer to projections living in the maximal groupoid $C^*$-algebra. Here, we also study the existence of Kazhdan type projections living in completions of $C_c(G)$ with respect to general families of representations. The extra generality causes no difficulties, and covers interesting examples.    

\begin{definition}\label{f c*alg}
Let $\mathcal{F}$ be a family of representations of $C_c(G)$.  The $C^*$-algebra $C^*_{\mathcal{F}}(G)$ is defined to be the separated completion of $C_c(G)$ for the (semi-)norm defined by
$$
\|f\|_{\mathcal{F}}:=\sup_{(H,\pi)\in \mathcal{F}}\|\pi(f)\|_{\mathcal{B}(H)}.
$$
\end{definition}

\begin{definition}\label{kaz proj def}
Let $G$ be an groupoid.  A projection $p\in C^*_{\mathcal{F}}(G)$ is a \emph{Kazhdan projection} if its image in any $*$-representation of $C^*_{\mathcal{F}}(G)$ is the orthogonal projection onto the constant vectors.
\end{definition}

Note that if it exists, a Kazhdan projection is uniquely determined by the defining condition, so we will just say `the' Kazhdan projection in future.  Note that the Kazhdan projection could exist and be zero: this happens if and only if $C^*_{\mathcal{F}}(G)$ does not have any $*$-representations with non-zero constant vectors.  For example, for $C^*_{\max}(G)$, Corollary \ref{inv meas} implies this happens if and only if $G^{(0)}$ does not admit an invariant probability measure. 

\begin{example}\label{kaz proj com}
Say $G$ is compact.  Then the function $p=\chi/(\Psi(\chi)\circ r)$ from the proof of Proposition \ref{com gpd t} is the Kazhdan projection in any $C^*_{\mathcal{F}}(G)$.
\end{example}

\subsection{Existence of Kazhdan projections}

Our first goal is to prove a general existence result for Kazhdan projections.  

To state it, recall that a groupoid $G$ is \emph{compactly generated} if there is a compact subset $K$ of $G$ such that any subgroupoid of $G$ containing $K$ must be all of $G$. 

\begin{theorem}\label{kaz proj}
Say $G$ is a compactly generated groupoid which has property (T) with respect to the family $\mathcal{F}$.  Then there exists a Kazhdan projection $p\in C^*_{\mathcal{F}}(G)$.
\end{theorem}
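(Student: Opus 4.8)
The plan is to imitate the classical spectral-gap construction of Kazhdan projections, working inside a single large representation that is ``universal'' for the family $\mathcal{F}$. First I would fix a compact generating set $K_0$ for $G$ (by compact generation) together with a compact Kazhdan set $K_1$ with Kazhdan constant $c>0$ for the family $\mathcal{F}$ (by property (T) with respect to $\mathcal{F}$); enlarging both, I may assume $K:=K_0=K_1$ is a single compact symmetric set containing $G^{(0)}$ that is simultaneously generating and Kazhdan. The idea is to build a self-adjoint element $h\in C_c(G)$, supported in a suitable compact set built from $K$, such that in every representation in $\mathcal{F}$ the operator $\pi(h)$ is positive, kills the constant vectors, and is bounded below by a positive constant on $H_\pi$; then functional calculus applied to $h$ inside $C^*_{\mathcal{F}}(G)$ produces the spectral projection onto $\ker$, which one checks is the Kazhdan projection.

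The key steps, in order: (1) Choose, using a partition-of-unity argument on the compact space $G^{(0)}$ together with the \'etale structure, finitely many functions $f_1,\dots,f_m\in C_c(G)$ with supports in $K$ and $\|f_i\|_I\le 1$ that ``witness'' the Kazhdan inequality uniformly — i.e.\ so that for every $(H,\pi)\in\mathcal{F}$ and every $\xi\in H_\pi$ one has $\sum_i\|f_i\xi-\Psi(f_i)\xi\|^2\ge c'\|\xi\|^2$ for some $c'>0$ depending only on $(K,c)$. This uses Definition \ref{prop t def} plus a compactness/finiteness argument to pass from ``there exists $f$'' to ``one of finitely many $f_i$ works'', and then from a single witness to a sum of squares. (2) Set $h:=\sum_{i=1}^m (f_i-\Psi(f_i))^*(f_i-\Psi(f_i))\in C_c(G)$. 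This is a positive element of the $*$-algebra $C_c(G)$; compute $\Psi$ on the relevant pieces to see that $\pi(h)$ annihilates every constant vector in any representation (since $f_i\xi=\Psi(f_i)\xi$ there), while step (1) gives $\langle\xi,\pi(h)\xi\rangle\ge c'\|\xi\|^2$ for $\xi\in H_\pi$ and every $(H,\pi)\in\mathcal{F}$. (3) Deduce that, in $C^*_{\mathcal{F}}(G)$, the spectrum of $h$ is contained in $\{0\}\cup[c',\|h\|_{\mathcal{F}}]$: indeed the set of constant vectors and its complement are preserved appropriately at the level of the ``block-diagonal'' structure — more precisely, one argues representation by representation, noting that $\pi(h)\ge 0$ with $\pi(h)|_{H^\pi}=0$ and $\pi(h)|_{H_\pi}\ge c'$, so $\sigma(\pi(h))\subseteq\{0\}\cup[c',\infty)$, and taking the sup over $\mathcal{F}$ controls the top of the spectrum. (4) Since $0$ is isolated in $\sigma(h)$, the function $\chi_{\{0\}}$ is continuous on $\sigma(h)$, so $p:=\chi_{\{0\}}(h)\in C^*_{\mathcal{F}}(G)$ is a well-defined projection; applying a representation $\pi$ of $C^*_{\mathcal{F}}(G)$ and using continuity of functional calculus, $\pi(p)=\chi_{\{0\}}(\pi(h))$ is the orthogonal projection onto $\ker\pi(h)=H^\pi$, which is exactly the defining property of a Kazhdan projection in Definition \ref{kaz proj def}.

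The step I expect to be the main obstacle is step (1): upgrading the ``for each $\xi$ there is \emph{some} $f$ supported in $K$'' in the definition of Kazhdan set to a \emph{finite, $\xi$-independent} family $f_1,\dots,f_m$ yielding a uniform sum-of-squares bound on all of $H_\pi$ simultaneously, uniformly over the family $\mathcal{F}$. In the group case this is handled by compactness of the unit ball of $C^*$ associated with finitely generated structure, but here one must be careful because $\{f\in C_c(G): \operatorname{supp} f\subseteq K,\ \|f\|_I\le 1\}$ is infinite-dimensional; the resolution is to note this set is contained in a compact subset of the finitely-generated $C(G^{(0)})$-module of functions supported on $K$ — decompose $K$ into finitely many open bisections, observe $f$ restricted to each is determined by an element of $C(G^{(0)})$ of sup-norm $\le 1$, and use that the relevant map into the (finite-dimensional-over-$C(G^{(0)})$, hence locally compact in the appropriate sense) module is continuous — together with a standard argument that an inequality holding ``pointwise in $\xi$ with varying $f$'' self-improves to a uniform spectral-gap inequality by a minimax/averaging argument. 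Here one must also verify that $\Psi$ and convolution behave continuously in the relevant topologies so the estimates survive the limiting process. The remaining steps (2)--(4) are routine once this uniform spectral gap is in hand, and compact generation is used only to guarantee that the compact set $K$ supporting the $f_i$ can be chosen once and for all (so that $h$ genuinely lies in $C_c(G)$).
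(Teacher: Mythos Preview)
Your overall architecture---build a Laplacian-type element $h=\sum_i(f_i-\Psi(f_i))^*(f_i-\Psi(f_i))$, establish a spectral gap in $C^*_{\mathcal{F}}(G)$, and take $p=\chi_{\{0\}}(h)$---is exactly what the paper does. Steps (2)--(3) are fine, and step (1) is close to what the paper actually uses: the paper covers the compact Kazhdan/generating set $K$ by finitely many open bisections, takes a partition of unity $\phi_1,\dots,\phi_n$ subordinate to this cover, and observes that any $f$ supported in $K$ with $\|f\|_I\le 1$ can be written as $\sum_i f_i\phi_i$ with $f_i$ bounded Borel functions on $G^{(0)}$ of sup-norm at most $1$; the estimate $\|(f-\Psi(f))\xi\|\le\sum_i\|(\phi_i-\Psi(\phi_i))\xi\|$ then converts the existential Kazhdan inequality into a uniform one for the fixed finite list $\phi_1,\dots,\phi_n$. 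This is simpler and more direct than the compactness argument you sketch.

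There is, however, a genuine gap at step (4), and it is tied to your closing remark that ``compact generation is used only to guarantee that \dots $h$ genuinely lies in $C_c(G)$''. That is not what compact generation is for. The Kazhdan projection must satisfy $\pi(p)=$ projection onto $H^\pi$ for \emph{every} $*$-representation $\pi$ of $C^*_{\mathcal{F}}(G)$, not only for $\pi\in\mathcal{F}$. Your spectral-gap inequality $\langle\xi,\pi(h)\xi\rangle\ge c'\|\xi\|^2$ on $H_\pi$ is only established for $\pi\in\mathcal{F}$; for an arbitrary representation of $C^*_{\mathcal{F}}(G)$ you only know $\sigma(\pi(h))\subseteq\{0\}\cup[c',\infty)$, which tells you $\pi(p)$ projects onto $\ker\pi(h)$ but does \emph{not} tell you $\ker\pi(h)=H^\pi$. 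The missing step is: if $f_i\xi=\Psi(f_i)\xi$ for all $i$, then $f\xi=\Psi(f)\xi$ for all $f\in C_c(G)$. This is exactly where the paper uses compact generation: choosing the $\phi_i$ to be a partition of unity on a \emph{generating} set $K$, one shows (their Lemma \ref{gen}) that products of the $\phi_i$ and functions on $G^{(0)}$ reach every point of $G$, so the invariance propagates from the $\phi_i$ to all of $C_c(G)$, in any representation. Your $f_i$, produced only to witness the Kazhdan inequality, carry no such generation property, so you cannot conclude $\ker\pi(h)\subseteq H^\pi$ in general.

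The fix is simple once identified: take the $f_i$ to be a partition of unity on a compact set that both generates $G$ and is Kazhdan (this is how the paper handles step (1) anyway), and add a separate argument, valid in every representation of $C_c(G)$, that $\phi_i\xi=\Psi(\phi_i)\xi$ for all $i$ forces $\xi\in H^\pi$.
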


The proof will proceed via some lemmas.

\begin{lemma}\label{bis}
Say $G$ is a compactly generated groupoid that has property (T) with respect to a family of representations $\mathcal{F}$.   Then there exists a constant $c>0$ and a finite set $\phi_1,...,\phi_n$ of functions $G\to [0,1]$ supported on relatively compact open bisections such that the set 
\begin{equation}\label{gen set}
\bigcup_{i=1}^n \{g\in G\mid \phi_i(g)\geq 1/n\}
\end{equation}
generates $G$, and such that for any representation $(H,\pi)$ in $\mathcal{F}$ and any vector $\xi\in H_\pi$ we have that 
\begin{equation}\label{kaz inq}
\|(\phi_i-\Psi(\phi_i))\xi\|\geq c\|\xi\|
\end{equation}
for at least one $i$.
\end{lemma}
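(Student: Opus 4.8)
The plan is to build the functions $\phi_i$ from a single compact set that is \emph{simultaneously} a compact generating set and a compact Kazhdan set for $\mathcal F$, using a partition of unity together with the fact that a function supported on a bisection factors through $C_c(G^{(0)})$ under convolution. Concretely, I would first fix a compact generating set $K_0$ for $G$ and a compact Kazhdan set $K_1$ for $\mathcal F$ with constant $c_0>0$, and put $K:=K_0\cup K_1\cup G^{(0)}$; since $G^{(0)}$ is compact this $K$ is compact, it still generates $G$, and it is still a Kazhdan set for $\mathcal F$ with constant $c_0$ because any $f$ supported in $K_1$ is supported in $K$. As $G$ is \'{e}tale and locally compact, I would cover $K$ by finitely many relatively compact open bisections and shrink them to get relatively compact open bisections $V_1,\dots,V_n$ and $U_1,\dots,U_n$ with $K\subseteq\bigcup_i V_i$ and $\overline{V_i}\subseteq U_i$ for each $i$. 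Using Urysohn's lemma, set $\phi_i:=\chi_i$, where $\chi_i:G\to[0,1]$ is continuous with $\chi_i\equiv 1$ on $\overline{V_i}$ and $\operatorname{supp}\chi_i\subseteq U_i$, and fix a partition of unity $\psi_1,\dots,\psi_n$ with $\psi_i:G\to[0,1]$ continuous, $\operatorname{supp}\psi_i\subseteq V_i$, and $\sum_i\psi_i\equiv 1$ on $K$. Put $c:=c_0/n$. The set in \eqref{gen set} then generates $G$, since it contains each $\overline{V_i}$ (where $\chi_i\equiv1\geq 1/n$) and hence $\bigcup_i V_i\supseteq K$.

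The heart of the argument is a factorization through bisections. Let $f\in C_c(G)$ be supported in $K$ with $\|f\|_I\leq 1$. The pointwise product $\psi_i f$ is supported in the bisection $V_i$, so it can be written as a \emph{convolution} product $\psi_i f=c_i*\chi_i$, where $c_i\in C_c(G^{(0)})$ is the function determined by $c_i(r(g))=(\psi_i f)(g)$ for $g\in V_i$ (and zero elsewhere); this identity is checked directly, using that $\chi_i\equiv 1$ on $\overline{V_i}$ and comparing supports inside the bisection $U_i$, and the same bookkeeping gives $\Psi(\psi_i f)=c_i*\Psi(\chi_i)$. Moreover $\|c_i\|_I=\|c_i\|_\infty=\|\psi_i f\|_\infty\leq\|\psi_i f\|_I\leq\|f\|_I\leq 1$. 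Since the convolutions above are the products in $C_c(G)$, and since every representation of $C_c(G)$ is contractive for the $I$-norm, applying $\pi$ gives
$$
\pi(\psi_i f)\xi-\pi(\Psi(\psi_i f))\xi=\pi(c_i)\bigl(\pi(\chi_i)\xi-\pi(\Psi(\chi_i))\xi\bigr),\qquad\|\pi(c_i)\|\leq 1,
$$
and hence $\|(\psi_i f-\Psi(\psi_i f))\xi\|\leq\|(\chi_i-\Psi(\chi_i))\xi\|$ for every vector $\xi$.

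To finish, let $(H,\pi)\in\mathcal F$ and $\xi\in H_\pi$. Property (T) with respect to $\mathcal F$ supplies $f\in C_c(G)$ supported in $K$ with $\|f\|_I\leq1$ and $\|(f-\Psi(f))\xi\|\geq c_0\|\xi\|$. Since $\sum_i\psi_i\equiv1$ on $\operatorname{supp}f\subseteq K$ we have $f=\sum_i\psi_i f$, so using linearity of $\Psi$, the triangle inequality, and the estimate above,
$$
c_0\|\xi\|\leq\|(f-\Psi(f))\xi\|\leq\sum_{i=1}^n\|(\psi_i f-\Psi(\psi_i f))\xi\|\leq\sum_{i=1}^n\|(\chi_i-\Psi(\chi_i))\xi\|,
$$
so $\|(\phi_i-\Psi(\phi_i))\xi\|\geq(c_0/n)\|\xi\|=c\|\xi\|$ for at least one $i$, as required. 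The main obstacle is exactly this passage from the $\xi$-dependent function $f$ furnished by property (T) to the \emph{fixed} finite family $\phi_1,\dots,\phi_n$: the bisection factorization $\psi_i f=c_i*\chi_i$ is what makes it possible, by absorbing all the $\xi$-dependence into the norm-one multiplication operator $\pi(c_i)$. The only genuinely delicate part of the write-up will be verifying this factorization and its companion identity for $\Psi$, where one must keep track of supports inside the nested bisections $V_i\subseteq U_i$; everything else is routine.
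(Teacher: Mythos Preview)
Your proof is correct and follows the same overall strategy as the paper: choose a single compact set $K$ that both generates $G$ and is Kazhdan for $\mathcal F$, decompose any $f$ supported in $K$ along finitely many bisections, and absorb the $\xi$-dependence into a left multiplier from $C(G^{(0)})$ of norm at most one.

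The difference is purely in the implementation of that last step. The paper takes the partition of unity itself as the $\phi_i$ and writes $f=\sum_i f_i\phi_i$ with $f_i$ a bounded \emph{Borel} function on $G^{(0)}$ (namely $f_i(x)=f(g)$ for the unique $g\in G^x\cap\operatorname{supp}\phi_i$ when it exists), invoking the extension of the representation of $C(G^{(0)})$ to bounded Borel functions. Your nested-bisection trick $V_i\subset\overline{V_i}\subset U_i$ with $\chi_i\equiv 1$ on $\overline{V_i}$ lets you instead factor $\psi_i f=c_i*\chi_i$ with $c_i$ genuinely in $C_c(G^{(0)})$; the injectivity of $r$ on the bisection $U_i$ is exactly what kills $c_i(r(g))\chi_i(g)$ for $g\in U_i\setminus V_i$, as you note. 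This buys you a slightly more elementary argument (no Borel functional calculus), at the cost of carrying two families $\psi_i$ and $\chi_i$ instead of one. Either way the constant comes out as $c_0/n$.
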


\begin{proof}
As $G$ is \'{e}tale and locally compact, it is covered by its open, relatively compact bisections.  Let $K\subseteq G$ be a compact set that is simultaneously a Kazhdan set for $\mathcal{F}$, and that generates $G$.   As $K$ is compact, it therefore admits a finite cover by relatively compact open bisections; let $\phi_1,...,\phi_n$ be a partition of unity subordinate to this open cover, so each $\phi_i$ takes values in $[0,1]$, is supported on some open relatively compact bisection, and for all $g\in K$, $\sum_{i=1}^n \phi_i(g)=1$.  We claim that $\phi_1,...,\phi_n$ have the required properties.  

Indeed, as $K$ generates $G$, the set in line \eqref{gen set} generates $G$ as it contains $K$.  To see the inequality in line \eqref{kaz inq}, note that as $K$ is a Kazhdan set there exists a constant $c_0>0$ such that for any representation $(H,\pi)$ in $\mathcal{F}$ and any vector $\xi\in H_\pi$ there exists $f\in C_c(G)$ supported in $K$ with $\|f\|_I\leq 1$ and such that 
\begin{equation}\label{base inq}
\|(f-\Psi(f))\xi\|\geq c_0\|\xi\|.
\end{equation}
Let $f_i:G^{(0)}\to \C$ be defined by 
$$
f_i(x):=\left\{\begin{array}{ll} f(g) & \text{ there is } g\in G^x\cap \text{supp}(\phi_i) \\ 0 & \text{otherwise}\end{array}\right.;
$$ 
as $G^x\cap \text{supp}(\phi_i)$ contains at most one point, this makes sense, and each $f_i$ is a bounded Borel function of compact support with $\|f_i\|_\infty\leq \|f\|_I\leq 1$.  Noting that the representation of $C(G^{(0)})\subseteq C_c(G)$ extends canonically to a representation of the $C^*$-algebra of bounded Borel functions on $G^{(0)}$, we may make sense of each $f_i$ as an operator on $H$, and we have the formula 
$$
f=\sum_{i=1}^n f_i\phi_i
$$
(where each product $f_i\phi_i$ means convolution of functions, or equivalently composition of operators) as operators on $H$.  Now, we have from line \eqref{base inq} that
\begin{align*}
c_0\|\xi\| & \leq \|(f-\Psi(f))\xi\|\leq \sum_{i=1}^n \|(f_i\phi_i-\Psi(f_i\phi_i))\xi\|= \sum_{i=1}^n \|f_i(\phi_i-\Psi(\phi_i))\xi\| \\ & \leq \sum_{i=1}^n \|f_i\|_\infty \|(\phi_i-\Psi(\phi_i))\xi\|\leq \sum_{i=1}^n \|(\phi_i-\Psi(\phi_i))\xi\|.
\end{align*}
The result follows with $c=c_0/n$.
\end{proof}

Now, with notation as in Lemma \ref{bis}, for each $i$, define 
$$
\Delta_i:=(\phi_i-\Psi(\phi_i))^*(\phi_i-\Psi(\phi_i)).
$$
Then clearly each $\Delta_i$ is an element of $C_c(G)$ whose image in any $*$-representation is a positive operator.  Define 
$$
\Delta:=\sum_{i=1}^n \Delta_i.
$$
One should think of $\Delta$ as a combinatorial Laplacian-type operator: indeed, it is an analogue of the well-studied group Laplacian for a discrete group with finite generating set $S$, defined by 
$$
\Delta_\Gamma:=\sum_{s\in S}2-s-s^*=\sum_{s\in S}(s-1)^*(s-1)\in \C[\Gamma].
$$

\begin{lemma}\label{gen}
With notation as above, for any representation $(H,\pi)$ of $C_c(G)$, the kernel of $\pi(\Delta)$ consists exactly of the constant vectors.
\end{lemma}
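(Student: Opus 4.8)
The plan is to split off a routine reduction and then run a generation argument at the level of open bisections. For the reduction, note that $\pi(\Delta)=\sum_{i=1}^n \pi(\phi_i-\Psi(\phi_i))^*\pi(\phi_i-\Psi(\phi_i))$ is a sum of positive operators, so $\pi(\Delta)\xi=0$ holds if and only if $\langle\xi,\pi(\Delta_i)\xi\rangle=\|\pi(\phi_i-\Psi(\phi_i))\xi\|^2=0$ for every $i$, that is, if and only if $\pi(\phi_i)\xi=\pi(\Psi(\phi_i))\xi$ for every $i$. A constant vector satisfies $\pi(f)\xi=\pi(\Psi(f))\xi$ for all $f\in C_c(G)$, hence in particular for $f=\phi_i$, so the inclusion of the constant vectors in $\ker\pi(\Delta)$ is immediate; the content is the reverse inclusion.

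To prove that $\pi(\phi_i)\xi=\pi(\Psi(\phi_i))\xi$ for all $i$ forces $\xi$ to be constant, I would work with the linear subspace $I_\xi:=\{f\in C_c(G):\pi(f)\xi=\pi(\Psi(f))\xi\}$, aiming to show $I_\xi=C_c(G)$. It contains $C(G^{(0)})$, on which $\Psi$ is the identity, and by hypothesis it contains each $\phi_i$. The argument rests on a handful of elementary identities in the convolution algebra: $\Psi(h*f)=h\,\Psi(f)$ for $h\in C(G^{(0)})$; the identity $f*h=\alpha_B(h)*f$ when $f$ is supported on an open bisection $B$ and $h\in C(G^{(0)})$, where $\alpha_B\colon r(B)\to s(B)$ is the homeomorphism induced by $B$; and the multiplicativity of $\Psi$ on functions supported on a common bisection. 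Using these I would establish three facts: (i) \emph{division} --- if $g\in I_\xi$ is supported on an open bisection and $f\in C_c(G)$ is supported on a compact subset of $\{g\neq 0\}$, then $f=m*g$ for a suitable $m\in C(G^{(0)})$, so $f\in I_\xi$; (ii) \emph{products} --- if $g,g'\in I_\xi$ are nonnegative and each supported on an open bisection, then $g*g'$ is too and lies in $I_\xi$; (iii) \emph{inverses} --- if $g\in I_\xi$ is nonnegative and supported on an open bisection $B$, then $g^{*}*\Psi(g)$ is nonnegative, supported on $B^{-1}$, lies in $I_\xi$, and is strictly positive exactly on $\{g>0\}^{-1}$.

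Granting (i)--(iii), I would conclude as follows. Let $\mathcal B'$ be the set of nonnegative elements of $I_\xi$ that are supported on open bisections, and set $\mathcal O:=\bigcup_{g\in\mathcal B'}\{g>0\}$. Bump functions in $C(G^{(0)})$ show $G^{(0)}\subseteq\mathcal O$; fact (ii) shows that $\mathcal O$ is closed under the partial multiplication of $G$, and fact (iii) that it is closed under inversion, so $\mathcal O$ is an open subgroupoid of $G$. Since $\{g\in G:\phi_i(g)\geq 1/n\}\subseteq\{\phi_i>0\}\subseteq\mathcal O$ for each $i$, the subgroupoid $\mathcal O$ contains the generating set provided by Lemma~\ref{bis}, hence $\mathcal O=G$. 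Finally, any $f\in C_c(G)$ is, by the \'{e}tale assumption and a partition of unity, a finite sum of functions each supported on a compact subset of some $\{g>0\}$ with $g\in\mathcal B'$; fact (i) places each summand in $I_\xi$, so $f\in I_\xi$. Thus $I_\xi=C_c(G)$, which is exactly the statement that $\xi$ is constant.

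I expect the main obstacle to be the verification of facts (ii) and (iii): each amounts to propagating the relation $\pi(g)\xi=\pi(\Psi(g))\xi$ through a convolution using only the translation maps $\alpha_B$ and the partial multiplicativity of $\Psi$, and some care is needed to keep track of where the relevant fibrewise products are defined and to check that both sides vanish when they are not. Fact (i) is the one other place where a small trick --- dividing by $g$ on its support --- is required, but it is otherwise routine.
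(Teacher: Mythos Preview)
Your proof is correct and follows essentially the same strategy as the paper's: reduce to $\phi_i\xi=\Psi(\phi_i)\xi$ for all $i$, propagate this relation through convolution products of the $\phi_i$ using the generation hypothesis, and then write a general $f\in C_c(G)$ as a $C(G^{(0)})$-combination of such products. The paper carries this out more concretely by setting $f_g:=\phi_{n(k)}*\cdots*\phi_{n(1)}$ for each $g=g_k\cdots g_1\in G$ and running an induction on word length (using the identity $\Psi(\psi)=\sqrt{\psi\psi^*}$ for nonnegative bisection-supported $\psi$ where you invoke $\alpha_B$), whereas you repackage that same induction as the statement that an open subgroupoid $\mathcal O$ equals $G$; your fact (iii) on inverses is not needed in the paper's argument, which tacitly uses that the compact generating set may be taken symmetric.
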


\begin{proof}
Fix a representation $(H,\pi)$ of $C_c(G)$; for simplicity we will omit $\pi$ from the notation.  Let $\xi$ be a constant vector in $H$.  Then $\phi_i\xi=\Psi(\phi_i)\xi$ for each $i$.  Hence $\xi$ is in the kernel of each $\Delta_i$, so in the kernel of $\Delta$.

Conversely, say $\xi$ is in the kernel of $\Delta$.  To show that $\xi$ is constant, it will suffice to show that for any $g\in G$, there is a non-negative function $f_g:G\to \C$ with compact support contained in a bisection, such that $f_g(g)=1$, and such that $f_g\xi=\Psi(f_g)\xi$.  Indeed, this suffices because any element of $f\in C_c(G)$ can then be written as a finite sum of products of the form $f=\sum_{i=1}^n \psi_if_{g_i}$ where $g_1,...,g_n\in G$, $f_{g_i}$ has the properties above, and $\psi_i$ is in $C(G^{(0)})$.   One then has that 
$$
\Psi(f)\xi=\sum_{i=1}^n \Psi(\psi_if_{g_i})\xi=\sum_{i=1}^n \psi_i\Psi(f_{g_i})\xi=\sum_{i=1}^n \psi_if_{g_i}\xi=f\xi,
$$ 
where the second equality uses that $\psi_i$ is supported on the base space, and that $f_{g_i}$ is supported on a bisection.  

Fix $g\in G$.  We aim to construct $f_g$ with the properties. above.  As the set in line \eqref{gen set} generates $G$, there exist $g_1,...,g_k$ in this set such that $g=g_k\cdots g_1$.  Say $n(m)$ is such that each $g_m$ is in $\{h\in G\mid \phi_{n(m)}(h)\geq 1/n\}$.  We claim that we can choose continuous functions $\psi_1,...,\psi_k: G^{(0)}\to [0,\infty)$ and open neighbourhoods $U_m$ and $V_m$ of $g_m$ with the following properties:
\begin{enumerate}[(i)]
\item for each $m\in \{1,...,k\}$, $\psi_m\phi_m$ is supported on $U_m$;
\item for each $m\in \{1,...,k\}$, $\psi_m\phi_m$ is constantly equal to one on $V_m$;
\item for each $m\in \{2,...,k\}$, $s(U_m)\subseteq r(V_{m-1})$.  
\end{enumerate} 

We will do this by an iterative construction starting with $m=1$.  Define $V_1:=\{h\in G\mid \phi_{n(1)}(h)> 1/(2n)\}$, and $U_1:=\{h\in G\mid \phi_{n(1)}(h)>0\}$, so $g_1\in U_1\subseteq \overline{U_1}\subseteq V_1$ and both $U_1$ and $V_1$ are open bisections.  Let $\psi_1\in C(G^{(0)})$ be any non-negative function that is supported in $r(U_1)$, and is such that $\psi_1(r(h))=1/\phi_{n(1)}(h)$ for all $h\in V_1$ (this formula makes sense as $\phi_{n(1)}$ is supported on a bisection).  Assume now that we have chosen $\psi_1,...,\psi_{m-1}$,  $U_1,..,U_{m-1}$, and $V_1,...,V_{m-1}$ with the right properties.  Let $U_{m}$ be any open bisection containing $g_{m}$ and such that $s(U_m)\subseteq r(V_{m-1})$.  Choose an open set $V_m$ such that $V_m\owns g_m$ and such that
$$
\overline{V_m}\subseteq U_m\cap \{h\in G\mid \phi_{n(m)}(h)>1/(2n)\}
$$
Choose $\psi_{m}\in C(G^{(0)})$ to be any function that is supported in $r(U_m)$, and such that $\psi_{m}$ satisfies $\psi_{m}(r(h))=1/\phi_{n(m)}(h)$ on $r(V_{m})$.  This completes the iterative construction.

We claim that the function 
\begin{equation}\label{fg form}
f_g:=\psi_k\phi_{n(k)}\cdots \psi_1\phi_{n(1)}
\end{equation}
has the required properties.  Indeed, note that $f_g$ is non-negative and has compact support contained in a bisection as each $\phi_i$ has the same properties.  Moreover, $f_g(g)=1$ as $(\psi_m\phi_{n(m)})(g_m)=1$ for all $m$.  It remains to show that $f_g\xi=\Psi(f_g)\xi$.  We prove this by induction on the number of elements $k$ appearing in an expression as in line \eqref{fg form} (and with the properties from our iterative construction above).  

Note first that  
$$
0=\langle \xi,\Delta\xi\rangle=\sum_{i=1}^n\|(\phi_i-\Psi(\phi_i))\xi\|^2
$$
whence $\phi_i\xi=\Psi(\phi_i)\xi$ for each $i$.   It follows that for any $m$
\begin{equation}\label{base case}
\psi_m\phi_{n(m)}\xi=\psi_m\Psi(\phi_{n(m)})\xi=\Psi(\psi_m\phi_{n(m)})\xi,
\end{equation}
where the last equality uses that $\psi_m$ is supported on $G^{(0)}$ and that $\phi_{n(m)}$ is supported on a bisection.  In particular, this completes the base case.  For the inductive step, say we know that 
$$
\psi_m\phi_{n(m)}\cdots \psi_1\phi_{n(1)}\xi=\Psi(\psi_m\phi_{n(m)}\cdots \psi_1\phi_{n(1)})\xi
$$
for some $m$.  Then using line \eqref{base case}
$$
\Psi(\psi_{m+1}\phi_{n(m+1)})\xi =\psi_{m+1}\phi_{n(m+1)}\xi.
$$
On the other hand, using that $\Psi(\psi_m\phi_{n(m)}\cdots \psi_1\phi_{n(1)})$ is constantly equal to one on $r(V_m)$ (this follows from the properties of the iterative construction), that $\psi_{m+1}\phi_{n(m+1)}$ is supported in $U_{m+1}$ and that $s(U_{m+1})\subseteq  r(V_m)$, we have
$$
\psi_{m+1}\phi_{n(m+1)}=\psi_{m+1}\phi_{n(m+1)}\Psi(\psi_m\phi_{n(m)}\cdots \psi_1\phi_{n(1)})
$$
and also that 
$$
\Psi(\psi_{m+1}\phi_{n(m+1)})=\Psi(\psi_{m+1}\phi_{n(m+1)}\psi_m\phi_{n(m)}\cdots \psi_1\phi_{n(1)}).
$$
Combining the last four displayed lines gives that 
\begin{align*}
\Psi(\psi_{m+1}\phi_{n(m+1)}\psi_m\phi_{n(m)}\cdots \psi_1\phi_{n(1)})\xi & =\Psi(\psi_{m+1}\phi_{n(m+1)})\xi \\ 
& =\psi_{m+1}\phi_{n(m+1)}\xi \\
& =\psi_{m+1}\phi_{n(m+1)}\Psi(\psi_m\phi_{n(m)}\cdots \psi_1\phi_{n(1)})\xi \\
& =\psi_{m+1}\phi_{n(m+1)}\psi_m\phi_{n(m)}\cdots \psi_1\phi_{n(1)}\xi,
\end{align*}
which completes the proof.
\end{proof}

\begin{lemma}\label{gap}
With $c>0$ as in Lemma \ref{bis}, we have that for any representation of $(H,\pi)$ in $\mathcal{F}$, the spectrum of $\pi(\Delta)$ is contained in $\{0\}\cup[c^2,\infty)$.
\end{lemma}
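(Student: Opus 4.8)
The plan is to deduce the spectral statement directly from the quantitative estimate in Lemma \ref{bis}, combined with the identification of $\ker\pi(\Delta)$ provided by Lemma \ref{gen}; no new ideas are required.

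First I would record that $\pi(\Delta)$ is a positive operator. Writing $a_i:=\phi_i-\Psi(\phi_i)\in C_c(G)$ we have $\Delta_i=a_i^*a_i$, so $\pi(\Delta)=\sum_{i=1}^n\pi(a_i)^*\pi(a_i)$ is positive, in particular self-adjoint, and hence has spectrum contained in $[0,\infty)$; here I use that $\pi$ extends to $C^*_{\max}(G)$ (as recalled in Section \ref{con sec}), so that $\pi(\Delta)$ is a genuine bounded operator. Moreover, for every $\xi\in H$,
$$
\langle\xi,\pi(\Delta)\xi\rangle=\sum_{i=1}^n\|\pi(a_i)\xi\|^2=\sum_{i=1}^n\|(\phi_i-\Psi(\phi_i))\xi\|^2 .
$$

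Next I would split $H=H^\pi\oplus H_\pi$ and argue that $\pi(\Delta)$ is block-diagonal for this decomposition. By Lemma \ref{gen}, $\ker\pi(\Delta)=H^\pi$, so $\pi(\Delta)$ kills $H^\pi$; and since $\pi(\Delta)$ is self-adjoint, $\overline{\operatorname{ran}\pi(\Delta)}=(\ker\pi(\Delta))^\perp=H_\pi$, so $\pi(\Delta)$ maps $H$, and in particular $H_\pi$, into $H_\pi$. Setting $T:=\pi(\Delta)|_{H_\pi}$, this is a positive operator on $H_\pi$, and for $\xi\in H_\pi$ the displayed identity together with Lemma \ref{bis} (which furnishes an index $i$ with $\|(\phi_i-\Psi(\phi_i))\xi\|\geq c\|\xi\|$) gives
$$
\langle\xi,T\xi\rangle=\sum_{i=1}^n\|(\phi_i-\Psi(\phi_i))\xi\|^2\geq c^2\|\xi\|^2 .
$$
Hence $T-c^2\geq 0$, so the spectrum of $T$ is contained in $[c^2,\infty)$. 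Since $\pi(\Delta)=0\oplus T$ with respect to $H=H^\pi\oplus H_\pi$, its spectrum is contained in $\{0\}\cup[c^2,\infty)$ (with the $H^\pi$-block contributing only $\{0\}$, and nothing at all if $H^\pi=\{0\}$), as claimed.

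The only mildly delicate step—which I would flag as the "main obstacle", although it is entirely routine—is the reduction to the orthogonal decomposition $H=H^\pi\oplus H_\pi$: one needs that $\pi(\Delta)$ respects it, which follows from the elementary fact that a bounded self-adjoint operator leaves the orthogonal complement of its kernel invariant, together with $\ker\pi(\Delta)=H^\pi$ from Lemma \ref{gen}. Everything else is a direct computation from the identity $\langle\xi,\pi(\Delta)\xi\rangle=\sum_i\|(\phi_i-\Psi(\phi_i))\xi\|^2$ and the Kazhdan-type inequality of Lemma \ref{bis}.
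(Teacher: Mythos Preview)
Your proof is correct and follows essentially the same route as the paper: both use Lemma \ref{gen} to identify $\ker\pi(\Delta)=H^\pi$ and then combine the identity $\langle\xi,\pi(\Delta)\xi\rangle=\sum_i\|(\phi_i-\Psi(\phi_i))\xi\|^2$ with the Kazhdan-type estimate from Lemma \ref{bis} on $H_\pi$. The only difference is cosmetic: you spell out the block-diagonal decomposition $\pi(\Delta)=0\oplus T$ explicitly, whereas the paper compresses this to the single sentence ``it suffices to show that $\langle\Delta\xi,\xi\rangle\geq c^2\|\xi\|^2$ for all $\xi\in H_\pi$''.
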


\begin{proof}
We have already seen that the kernel of $\pi(\Delta)$ consists precisely of $H^\pi$, so it suffices to show that $\langle \Delta\xi,\xi\rangle\geq c^2\|\xi\|^2$ for all $\xi\in H_\pi$.  Indeed, this follows directly from Lemma \ref{bis} as we have
\begin{align*}
\langle \Delta\xi,\xi\rangle =\sum_{i=1}^n \|(\phi_i-\Psi(\phi_i))\xi\|^2\geq c^2\|\xi\|^2
\end{align*}
as required.
\end{proof}

Putting the above together, we may now complete the proof of Theorem \ref{kaz proj}.

\begin{proof}[Proof of Theorem \ref{kaz proj}]
With $\Delta$ as above, Lemma \ref{gap} implies that the spectrum of $\Delta$ as an element of $C^*_{\mathcal{F}}(G)$ is contained in $\{0\}\cup[c^2,\infty)$.  Hence the characteristic function of zero $\chi_{\{0\}}$ is continuous on the spectrum of $\Delta$, and so we may set $p:=\chi_{\{0\}}(\Delta)\in C^*_{\mathcal{F}}(G)$.  This has the right property by Lemma \ref{gen}.
\end{proof}

\subsection{Kazhdan projections in $C^*_r(G)$ and exactness}

In this subsection, we want to study the Kazhdan projection in $C^*_r(G)$ when it exists.  In particular we aim to characterise when it is non-zero.  For this, we need to know when $C^*_r(G)$ has representations with non-zero constant vectors. 

The next lemma is the key technical ingredient.  To state it, recall from Example \ref{reg reps} that if $G$ is a groupoid and $x\in G^{(0)}$, then the \emph{regular representation} of $C_c(G)$ is defined to be the pair $(\ell^2(G_x),\pi_x)$, where for $f\in C_c(G)$, $\pi_x(f)$ acts via the usual convolution formula
$$
(\pi_x(f)\xi)(g):=\sum_{h\in G^{r(g)}}f(h)\xi(h^{-1}g).
$$ 

\begin{lemma}\label{pix con}
Let $G$ be a groupoid and let $(\ell^2(G_x),\pi_x)$ be the regular representation associated to some $x\in G^{(0)}$.  Then the invariant vectors in $\ell^2(G_x)$ in the sense of Definition \ref{con vec def} are exactly the functions $\xi:G_x\to \C$ that are constant in the usual sense.
\end{lemma}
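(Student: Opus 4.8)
The plan is to write out explicitly what the invariance condition $f\xi=\Psi(f)\xi$ says for the regular representation, and then to use test functions supported on small bisections to decouple the resulting system of equations.

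First I would identify how the operator $\pi_x(\Psi(f))$ acts. Since $\Psi(f)$ lies in $C(G^{(0)})\subseteq C_c(G)$, and since for $g\in G_x$ the only element of $G^{r(g)}$ lying in $G^{(0)}$ is the unit $r(g)$, the convolution formula collapses to show that $\pi_x$ restricted to $C(G^{(0)})$ is just multiplication by the corresponding function pulled back along $r$; concretely, $(\pi_x(\Psi(f))\xi)(g)=\big(\sum_{h\in G^{r(g)}}f(h)\big)\xi(g)$. Comparing this with the formula $(\pi_x(f)\xi)(g)=\sum_{h\in G^{r(g)}}f(h)\xi(h^{-1}g)$, the statement that $\xi$ is invariant becomes the requirement that
$$
\sum_{h\in G^{r(g)}}f(h)\big(\xi(h^{-1}g)-\xi(g)\big)=0
$$
for every $f\in C_c(G)$ and every $g\in G_x$.

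The ``if'' direction is then immediate: a constant $\xi$ makes every bracket vanish. For ``only if'', given $g_1,g_2\in G_x$ with $g_1\neq g_2$, I would set $h_0:=g_1g_2^{-1}$; since $s(g_1)=s(g_2)=x$ this product is defined, it satisfies $r(h_0)=r(g_1)$ and $h_0^{-1}g_1=g_2$, and a short argument (an element of the form $g_1g_2^{-1}$ is a unit if and only if $g_1=g_2$) shows $h_0\notin G^{(0)}$. Using that $G^{(0)}$ is closed and that $G$ has a basis of open bisections, I choose an open bisection $B$ with $h_0\in B\subseteq G\setminus G^{(0)}$, and then, by local compactness and Hausdorffness, pick $f\in C_c(G)$ supported in $B$ with $f(h_0)=1$. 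Since $r$ is injective on $B$, the only element of $G^{r(g_1)}$ in the support of $f$ is $h_0$, so the displayed identity at $g=g_1$ reduces to $\xi(g_2)-\xi(g_1)=0$. As $g_1,g_2$ were arbitrary, $\xi$ is constant.

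I expect the only point requiring care to be the localization step: verifying that $g_1g_2^{-1}$ is genuinely not a unit when $g_1\neq g_2$ (so that a bisection missing $G^{(0)}$ can be found around it), and that a test function supported on a bisection really does annihilate every term of the sum except the one indexed by $h_0$. Everything else is bookkeeping with the convolution formula. Note that the Hilbert-space structure plays no role in the argument proper — the conclusion concerns only the pointwise values of $\xi$ — so $\ell^2$-ness enters only afterwards, in observing that the space of constant functions in $\ell^2(G_x)$ is one-dimensional when $G_x$ is finite and zero otherwise.
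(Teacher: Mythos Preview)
Your argument is correct and is in fact more direct than the paper's. The paper proceeds by first invoking Proposition~\ref{inv vec char}: from a norm-one invariant vector $\xi$ it extracts the invariant probability measure $\mu_\xi=\sum_{y\in Gx}w(y)\delta_y$ on $G^{(0)}$, uses invariance of $\mu_\xi$ together with test functions to show the weight $w$ is constant on the orbit $Gx$, deduces that $Gx$ (hence $G_x$) is finite, and only then uses a bisection test-function argument very similar to yours to conclude $\xi(g)=\xi(x)$ for all $g\in G_x$. Your route bypasses the invariant-measure step entirely: once the invariance condition is written as $\sum_{h\in G^{r(g)}}f(h)\bigl(\xi(h^{-1}g)-\xi(g)\bigr)=0$, a single bisection-supported test function isolates any chosen difference $\xi(g_2)-\xi(g_1)$. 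This is shorter and needs no appeal to earlier results. The paper's approach, on the other hand, makes the measure-theoretic picture explicit (and establishes finiteness of the orbit as an intermediate byproduct), which is conceptually suggestive even though it is not logically required for the lemma. A small remark: your insistence that the bisection $B$ avoid $G^{(0)}$ is harmless but actually unnecessary, since any open bisection through $h_0$ automatically misses $r(g_1)$ (both have range $r(g_1)$, and $r$ is injective on $B$), and the $h=r(g_1)$ term in the sum is zero anyway.
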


\begin{proof}
It is straightforward to check that a constant function in $\ell^2(G_x)$ is invariant; we leave this to the reader. 

Conversely, let $\xi\in \ell^2(G_x)$ be a norm one invariant vector.  Then the associated probability measure $\mu$ on $G^{(0)}$ defined on $f\in C_c(G^{(0)})$ by 
$$
\mu(f):=\langle \xi,f\xi\rangle=\sum_{g\in G_x}|\xi(g)|^2f(r(g))
$$ 
is invariant by Proposition \ref{inv vec char}.  The measure $\mu$ equals the weighted sum $\sum_{g\in G_x}|\xi(g)|^2\delta_{r(g)}$ of Dirac masses.  Consider the orbit $Gx:=r(G^x)$, and define 
$$
w:Gx\to [0,1],\quad y\mapsto \sum_{g\in G_x\cap G^y} |\xi(g)|^2,
$$
so we have $\mu=\sum_{y\in Gx} w(y)\delta_y$.  As $\mu$ is invariant, we have that 
$$
\int_{G}fdr^*\mu=\int_{G}fds^*\mu,
$$
or in other words that 
\begin{equation}\label{inv meas 2}
\sum_{y\in Gx} w(y) \sum_{h\in G_{y}}f(h) = \sum_{y\in Gx} w(y)\sum_{h\in G^{y}}f(h) 
\end{equation}
for all $f\in C_c(G)$.  Now, for $y,z\in Gx$, fix $h\in G$ with $s(h)=y$ and $r(h)=z$. Let $(U_i)_{i\in I}$ be a basis of open neighbourhoods of $h$ with compact closure, and $(f_i)$ be a net of functions in $C_c(G)$ such that the support of $f_i$ is contained in $U_i$, $0\leq f_i \leq 1$, and $f_i(h)=1$ for all $i$.  
Then substituting $f_i$ into line \eqref{inv meas 2} above and taking the limit over $i$ forces $w(y)=w(z)$, or in other words that $w$ is constant on the orbit $Gx$.  As $\mu$ is a probability measure, this is impossible unless $Gx$ is finite.

Summarising, then: at this point, we have that the cardinality $n$ of $Gx$ is finite, and for each $y\in Gx$, 
$$
\mu(\{y\})=1/n.
$$
As the set $Gx$ is finite (and as $G$ is \'{e}tale), for each $g\in G_x$ there exists a continuous function $f:G\to [0,1]$ supported on a relatively compact open bisection such that  $f(g)=1$ and $f(h)=0$ for all $h\in G_x\setminus \{g\}$.  As $\xi$ is constant, we have that 
$$
\pi_x(f)\xi=\pi_x(\Psi(f))\xi
$$
as functions on $G_x$, and evaluating both sides of the line above at $g$ gives $\xi(x)=\xi(g)$.  This just says that $\xi$ is constant (in the naive sense of `taking the same value at each point of $G_x$'), so we are done.
\end{proof}

The following result gives a fairly precise characterisation of what the Kazhdan projection `looks like' in $C^*_r(G)$.  To state it, let $E:C^*_r(G)\to C(G^{(0)})$ be the canonical conditional expectation of \cite[Proposition 2.3.22]{Renault:2009zr}. 

\begin{proposition}\label{kaz im}
Let $G$ be a groupoid, and assume that there exists a Kazhdan projection $p\in C^*_{r}(G)$.  Then 
$$
\{x\in G^{(0)}\mid E(p)(x)>0\}=\{x\in X\mid G_x \text{ is finite } \}.
$$
In particular, if a Kazhdan projection $p$ exists in $C^*_{r}(G)$, then it is non-zero if and only if the source fibre $G_x$ is finite for some $x\in G^{(0)}$.  
\end{proposition}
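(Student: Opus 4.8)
The plan is to evaluate $E(p)$ pointwise by passing to the regular representations. The key identity I would use at the outset is that for every $a\in C^*_r(G)$ and every $x\in G^{(0)}$,
$$
E(a)(x)=\langle \delta_x,\pi_x(a)\delta_x\rangle,
$$
where $\delta_x\in\ell^2(G_x)$ denotes the indicator of the unit $x\in G_x$. This is immediate on $C_c(G)$: from the convolution formula in Example \ref{reg reps} one computes $\pi_x(f)\delta_x=f|_{G_x}$, so $\langle\delta_x,\pi_x(f)\delta_x\rangle=f(x)=E(f)(x)$; both sides are norm-continuous in $a$, so the identity extends to all of $C^*_r(G)$.

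Next I would use that $p$ is a Kazhdan projection, so $\pi_x(p)$ is the orthogonal projection of $\ell^2(G_x)$ onto its subspace of constant vectors, which by Lemma \ref{pix con} consists exactly of the functions $G_x\to\C$ that are constant in the naive sense. Two cases then arise. If $G_x$ is infinite, then no non-zero constant function is square-summable, so this subspace is $\{0\}$, $\pi_x(p)=0$, and hence $E(p)(x)=0$. If $G_x$ is finite, say $|G_x|=n$, the constant vectors form the one-dimensional span of the unit vector $e:=n^{-1/2}\mathbf{1}$, so $\pi_x(p)$ is the rank-one projection $\xi\mapsto\langle e,\xi\rangle e$, and a short computation gives $E(p)(x)=\langle\delta_x,\pi_x(p)\delta_x\rangle=|\langle e,\delta_x\rangle|^2=1/n>0$. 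This establishes the displayed equality (indeed with the more precise value $E(p)(x)=1/|G_x|$ when $G_x$ is finite).

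Finally, for the ``in particular'' statement I would invoke the fact that the canonical conditional expectation $E$ of \cite[Proposition 2.3.22]{Renault:2009zr} is faithful. If $G_x$ is finite for some $x$, then $E(p)(x)>0$ by the above, so $E(p)\neq 0$ and therefore $p\neq 0$. Conversely, if $G_x$ is infinite for every $x\in G^{(0)}$, the computation above gives $E(p)=0$; since $p\geq 0$ and $E$ is faithful, this forces $p=0$.

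I do not expect a genuine obstacle here: the substantive input — the description of the invariant vectors of $\pi_x$ — is already available as Lemma \ref{pix con}, and faithfulness of $E$ is standard. The only things needing a little care are the verification of the identity $E(a)(x)=\langle\delta_x,\pi_x(a)\delta_x\rangle$ and the bookkeeping of the normalization $1/\sqrt{n}$ so that the final value $1/|G_x|$ comes out correctly.
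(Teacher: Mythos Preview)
Your proposal is correct and follows essentially the same route as the paper: establish the identity $E(a)(x)=\langle\delta_x,\pi_x(a)\delta_x\rangle$, invoke Lemma~\ref{pix con} to identify the constant vectors in $\ell^2(G_x)$, compute $E(p)(x)=1/|G_x|$ or $0$ according as $G_x$ is finite or infinite, and finish with faithfulness of $E$. The only difference is that the paper cites \cite[Proposition~2.3.20]{Renault:2009zr} for the identity on $E$, whereas you verify it directly on $C_c(G)$ and extend by continuity; the computations are otherwise identical.
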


\begin{proof}
Let $x\in G^{(0)}$, and let $(\ell^2(G_x),\pi_x)$ be the associated regular representation.  As in the proof of \cite[Proposition 2.3.20]{Renault:2009zr}, we have that 
$$
E(a)(x)=\langle \delta_x,\pi_x(a)\delta_x\rangle_{\ell^2(G_x)}
$$
for any $a\in C^*_r(G)$.  Using Lemma \ref{pix con}, the Kazhdan projection $\pi_x(p)$ is non-zero if and only if $G_x$ is finite, in which case its image consists of all constant vectors.  Note moreover that if $G_x$ is finite, then this description gives that 
$$
\langle \delta_x,\pi_x(p)\delta_x\rangle=\frac{1}{|G_x|}.
$$   
Hence we have that 
$$
E(p)(x)=\left\{\begin{array}{ll} 1/|G_x| & G_x \text{ finite} \\ 0 & G_x \text{ infinite} \end{array}\right.
$$
The given equality of sets follows.

The remaining statement follows as the canonical conditional expectation $E:C^*_r(G)\to C(G^{(0)})$ is faithful (see \cite[Proposition 2.3.22]{Renault:2009zr}).
\end{proof}

We now turn to an application to (inner) exactness.   Recall first that if $G$ is a groupoid, a subset $E$ of $G^{(0)}$ is \emph{invariant} if whenever $g\in G$ is such that $s(g)$ is in $E$, we also have that $r(g)$ is in $E$.  If $E$ is an open or closed invariant subset of $G^{(0)}$, then the \emph{restriction} $G|_E:=G_E^E$ it itself a (locally compact, Hausdorff, \'{e}tale) groupoid.  It follows directly from the definition of the reduced groupoid $C^*$-algebra (see for example \cite[Section 2.3.4]{Renault:2009zr}) that if $F$ is a closed invariant subset of $G^{(0)}$, then the natural restriction map $C_c(G)\to C_c(G|_F)$ extends to quotient $*$-homomorphism $C^*_r(G)\to C^*_r(G|_F)$.  Moreover, if $U$ is an open invariant subset of $G$, then $C_c(G|_U)$ is an ideal in $C_c(G)$, and the inclusion $C_c(G|_U)\to C_c(G)$ extends to an inclusion of a $C^*$-ideal $C^*_r(G|_U)\to C^*_r(G)$.  

If now $F$ is a closed invariant subset of $G^{(0)}$ and $U$ its (necessarily open and invariant complement), then in the diagram below
$$
\xymatrix{ 0 \ar[r] & C^*_r(G|_U) \ar[r]^-\iota &  C^*_r(G) \ar[r]^-\pi &  C^*_r(G|_F) \ar[r] & 0 }
$$
all the conditions needed to be a short exact sequence are always satisfied, except one may have that the kernel of $\pi$ strictly contains the image of $\iota$.  While it is often true that this sequence will be exact, this need not always be the case, leading to the next definition.

\begin{definition}\label{inner exact}
A groupoid $G$ is \emph{inner exact} if for any open invariant subset $U$ of $G^{(0)}$ with closed complement $F$, the canonical sequence
$$
\xymatrix{ 0 \ar[r] & C^*_r(G|_U) \ar[r] &  C^*_r(G) \ar[r] &  C^*_r(G|_F) \ar[r] & 0 }
$$
discussed above is exact in the middle.  
\end{definition}

Although it looks a little technical at first, the proposition below (combined with Theorem \ref{kaz proj}) gives many examples of non-inner exact groupoids coming from property (T).  It, or variations of it, underlies many of the counterexamples to the Baum-Connes conjecture considered in \cite{Higson:2002la}.  We do not claim, however, that the result is optimal in any sense.

\begin{proposition}\label{t ex}
Say $G$ is a groupoid such that the Kazhdan projection exists in $C^*_r(G)$.  Assume moreover that there is a closed invariant subset $F$ of $G^{(0)}$ with complement $U=G^{(0)}\setminus F$ and a net $(x_i)$ in $U$ with the following properties:
\begin{enumerate}[(i)]
\item for every $x\in F$, $G_x$ is infinite;
\item for every $i$, $G_{x_i}$ is finite;
\item for any compact subset $K$ of $U$, the orbit $Gx_i:=\{r(g)\mid g\in G_{x_i}\}$ does not intersect $K$ for all suitably large $i$.
\end{enumerate}
Then the sequence
$$
\xymatrix{ 0 \ar[r] & C^*_r(G|_U) \ar[r] &  C^*_r(G) \ar[r] &  C^*_r(G|_F) \ar[r] & 0 }
$$
is not exact, and in particular $G$ is not inner exact.
\end{proposition}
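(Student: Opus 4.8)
The plan is to take the Kazhdan projection $p\in C^*_r(G)$ supplied by the hypothesis and show that it lies in the kernel of the quotient map $\pi\colon C^*_r(G)\to C^*_r(G|_F)$ but \emph{not} in the ideal $\iota(C^*_r(G|_U))$. Since $\iota$ is always injective, $\pi$ always surjective, and $\pi\circ\iota=0$, producing such a $p$ is exactly what is needed to see that the sequence fails exactness in the middle, and hence that $G$ is not inner exact. So the argument splits into two steps.

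First I would show $\pi(p)=0$. The point is that for $y\in F$, invariance of $F$ identifies $(G|_F)_y$ with $G_y$ (and $(G|_F)^z$ with $G^z$ for $z\in F$), so that the regular representation of $C^*_r(G|_F)$ at $y$, composed with $\pi$, is just the regular representation $\pi_y$ of $C^*_r(G)$ at $y$. By hypothesis (i), $G_y$ is infinite, so $\ell^2(G_y)$ contains no nonzero constant function, hence (Lemma \ref{pix con}) no nonzero invariant vector; since the image of the Kazhdan projection in any $*$-representation is the orthogonal projection onto the invariant vectors, $\pi_y(p)=0$. As the norm on $C^*_r(G|_F)$ is the supremum of the regular representations over $y\in F$, this gives $\pi(p)=0$.

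Next I would show $p\notin C^*_r(G|_U)$, arguing by contradiction: write $p=\lim_\lambda a_\lambda$ with $a_\lambda\in C_c(G|_U)$ and fix $\lambda$ with $\|p-a_\lambda\|<1$. Put $Y:=r(\operatorname{supp} a_\lambda)$, a compact subset of $U$. For $g,g'\in G_{x_i}$ one computes that the matrix coefficient $\langle\delta_g,\pi_{x_i}(a_\lambda)\delta_{g'}\rangle$ equals $a_\lambda(g(g')^{-1})$, and $r(g(g')^{-1})=r(g)$ lies in the orbit $Gx_i$; so this coefficient vanishes unless $Gx_i\cap Y\neq\emptyset$. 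By hypothesis (iii) this fails for all large $i$, hence $\pi_{x_i}(a_\lambda)=0$ for all large $i$. But hypothesis (ii) makes $G_{x_i}$ finite, so $\ell^2(G_{x_i})$ has a nonzero constant function, whence (Lemma \ref{pix con}) $\pi_{x_i}(p)$ is a nonzero projection with $\|\pi_{x_i}(p)\|=1$. Then for large $i$, $1=\|\pi_{x_i}(p)\|=\|\pi_{x_i}(p-a_\lambda)\|\le\|p-a_\lambda\|<1$, a contradiction. Combined with the first step, $p\in\ker\pi\setminus\operatorname{im}\iota$, so the sequence is not exact, and $G$ is not inner exact.

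I expect the fibre identifications in Step 1 and the matrix-coefficient computation in Step 2 to be routine. The step most needing care is the invocation of hypothesis (iii): one must check that the only values of $a_\lambda$ detected by the regular representation $\pi_{x_i}$ are those on arrows whose range lies in the orbit $Gx_i$, so that separating $Gx_i$ from the compact set $r(\operatorname{supp} a_\lambda)$ genuinely forces $\pi_{x_i}(a_\lambda)=0$ for $i$ large. Once that localisation is in hand, the remaining bookkeeping is straightforward.
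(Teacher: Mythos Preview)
Your proof is correct and follows essentially the same strategy as the paper's: show that $p$ maps to zero in $C^*_r(G|_F)$ using hypothesis (i) and the characterisation of invariant vectors in regular representations (Lemma~\ref{pix con}), then show $p\notin C^*_r(G|_U)$ by approximating with $a\in C_c(G|_U)$, invoking hypothesis (iii) to kill $\pi_{x_i}(a)$ for large $i$, and contradicting $\|\pi_{x_i}(p)\|=1$ from hypothesis (ii). Your treatment is in fact slightly more explicit in both steps --- you spell out the fibre identification for Step~1 and the matrix-coefficient localisation for Step~2 --- whereas the paper appeals to Proposition~\ref{kaz im} as a black box; but the underlying argument is the same.
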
 

\begin{proof}
With assumptions as in the proposition, note that the Kazhdan projection in $C^*_r(G)$ has to map to the Kazhdan projection in $C^*_r(G|_F)$, which is zero by the assumption that $F^{(0)}$ contains no points with finite source fibre, and Proposition \ref{kaz im}.  Thus we must show that $p$ is not in $C^*_r(G|_U)$; assume for contradiction that this is the case, so in particular there exists $a\in C_c(G|_U)$ such that $\|p-a\|_{C^*_r(G)}<1/2$.  

Let $(x_i)$ be the net in the assumptions.  Then as each $G_{x_i}$ is finite, Proposition \ref{kaz im} implies that the image $\pi_{x_i}(p)$ under the regular representation $\pi_{x_i}$ is a non-trivial projection, so norm one.  On the other hand, the assumption that the orbits $Gx_i$ are eventually disjoint from any compact subset of $U$ implies that $\pi_{x_i}(a)=0$ for all suitably large $i$.  Thus we have 
$$
1/2>\|p-a\|_{C^*_r(G)}\geq \limsup_i \|\pi_{x_i}(p)-\pi_{x_i}(a)\|=1,
$$
which is the desired contradiction.  
\end{proof}

\begin{examples}\label{exact ex}
There are two interesting examples where Proposition \ref{t ex} applies that we have discussed already in this paper; no doubt other examples are possible, but we will content ourselves with these here.

The first occurs for HLS groupoids (Definition \ref{hls gpd}), associated to a group and approximating sequence with property $(\tau)$ as in Proposition \ref{hls tau}.  In this case one can take $U$ to be the subset $\N$ of the unit space $\N\cup\{\infty\}$, and $F$ to be the singleton $\{\infty\}$.  

A second interesting example occurs when $X$ is an expander as in Definition \ref{exp def}.  Then $G(X)$ has property (T) with respect to the singleton family $\mathcal{F}_{\ell^2(X)}$ consisting of the natural representation on $\ell^2(X)$ by Proposition \ref{exp t}.  We have that $C^*_{\mathcal{F}}(G(X))$ equals $C^*_r(G(X))$ in this case (see for example \cite[Proposition 10.29]{Roe:2003rw}), so the Kazhdan projection exists in $C^*_r(G(X))$ by Theorem \ref{kaz proj}.  In this case, recall that $G(X)^{(0)}$ is the Stone-\v{C}ech compactification of $X$.  One can take $U$ to be $X$, and $F$ to be the Stone-\v{C}ech remainder $\beta X\setminus X\subseteq G(X)^{(0)}$.  
\end{examples}

\subsection{Kazhdan projections as $K$-theory classes}

In this subsection, we say a little about the class of the Kazhdan projection in $K$-theory.  We start with a discussion of failures of inner $K$-exactness.  

\begin{definition}\label{in k ex}
A groupoid $G$ is \emph{inner $K$-exact} if for every open invariant subset $U\subseteq G^{(0)}$ with closed complement $F$, the corresponding sequence 
$$
K_*(C^*_r(G|_U))\to K_*(C^*_r(G))\to K_*(C^*_r(G|_F))
$$
of $K$-theory groups is exact in the middle.  
\end{definition}

\begin{proposition}\label{in k ex res}
Under the assumptions of Proposition \ref{t ex}, the class $[p]\in K_0(C^*_r(G))$ of the Kazhdan projection goes to zero in $K_0(C^*_r(G|_F))$, but is not in the image of the map $K_0(C^*_r(G|_U))\to K_0(C^*_r(G))$.  In particular, $G$ fails to be inner $K$-exact.
\end{proposition}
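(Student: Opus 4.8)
The plan is to prove the two displayed assertions in turn; the first is immediate from Proposition \ref{kaz im}, and the second is the substantive part. For the first, recall from the proof of Proposition \ref{t ex} that the quotient $*$-homomorphism $\pi\colon C^*_r(G)\to C^*_r(G|_F)$ sends the Kazhdan projection $p$ to the Kazhdan projection of $C^*_r(G|_F)$; since $G_x$ is infinite for every $x\in F$ by hypothesis (i), Proposition \ref{kaz im} forces the latter to be zero, so $\pi_*[p]=[\pi(p)]=0$ in $K_0(C^*_r(G|_F))$.

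For the second assertion I would exploit the whole family of regular representations at the points $x_i$ rather than just the single map to $C^*_r(G|_F)$. Put $n_i:=|G_{x_i}|$ --- finite by hypothesis (ii) --- and form the $\ell^\infty$-product $\mathcal M:=\prod_i\mathcal B(\ell^2(G_{x_i}))=\prod_i M_{n_i}(\C)$, its $c_0$-ideal $\mathcal C:=\bigoplus_i M_{n_i}(\C)$, the inclusion $\iota\colon\mathcal C\hookrightarrow\mathcal M$, and the $*$-homomorphism $\Pi:=(\pi_{x_i})_i\colon C^*_r(G)\to\mathcal M$ assembled from the regular representations (well-defined and contractive since each $\pi_{x_i}$ extends to $C^*_r(G)$ with norm at most $\|\cdot\|_r$). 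Here one may harmlessly take the index set to be $\N$: the only use of the net structure is through hypothesis (iii), which below is applied only to the finitely many elements of $C_c(G|_U)$ entering a single $K_0$-class --- all supported in one compact subset of $G|_U$ --- so one extracts a cofinal sequence accordingly; in the examples of interest $U$ is already $\sigma$-compact. Exactly as in the proof of Proposition \ref{t ex}, hypothesis (iii) gives $\pi_{x_i}(b)=0$ for all suitably large $i$ whenever $b\in C_c(G|_U)$, because the orbit $Gx_i$ eventually misses the compact set $r(\operatorname{supp}b)\subseteq U$; since $\mathcal C$ is closed in $\mathcal M$ and $\Pi$ is contractive, it follows that $\Pi$ carries the ideal $C^*_r(G|_U)$ into $\mathcal C$. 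Writing $j\colon C^*_r(G|_U)\hookrightarrow C^*_r(G)$ for the inclusion, $\Pi\circ j$ therefore factors as $\iota\circ\rho$ for some $*$-homomorphism $\rho\colon C^*_r(G|_U)\to\mathcal C$, so if one had $[p]=j_*(z)$ with $z\in K_0(C^*_r(G|_U))$, then $\Pi_*[p]=\iota_*(\rho_*(z))$ would lie in the image of $\iota_*\colon K_0(\mathcal C)\to K_0(\mathcal M)$.

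It thus suffices to show that $\Pi_*[p]=[\Pi(p)]$ is \emph{not} in the image of $\iota_*$, and this is the one genuinely $K$-theoretic step: a nonzero projection need not be $K$-theoretically nonzero, so one needs a concrete invariant. I would use the evaluation $*$-homomorphisms $\mathrm{ev}_i\colon\mathcal M\to M_{n_i}(\C)$. On the one hand, since $G_{x_i}$ is finite and nonempty, Lemma \ref{pix con} together with the defining property of the Kazhdan projection identifies $\pi_{x_i}(p)=\mathrm{ev}_i(\Pi(p))$ as the rank-one projection onto the constant functions in $\ell^2(G_{x_i})$, so under $K_0(M_{n_i}(\C))\cong\Z$ (rank) we get $(\mathrm{ev}_i)_*[\Pi(p)]=1$ for every $i$. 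On the other hand, under the standard identification $K_0(\mathcal C)=K_0(\bigoplus_i M_{n_i}(\C))=\bigoplus_i\Z$ --- for which $\mathrm{ev}_i\circ\iota$ induces the $i$-th coordinate projection --- every class in the image of $\iota_*$ is represented by a finitely supported tuple and hence has $(\mathrm{ev}_i)_*$-value $0$ for all large $i$. These are incompatible, so $[\Pi(p)]$ is not in the image of $\iota_*$, whence $[p]$ is not in the image of $j_*$. Together with the first assertion this says that $[p]$ lies in the kernel of $K_0(C^*_r(G))\to K_0(C^*_r(G|_F))$ but not in the image of $K_0(C^*_r(G|_U))\to K_0(C^*_r(G))$, so the sequence of Definition \ref{in k ex} fails to be exact in the middle for this $U$ and $F$, and $G$ is not inner $K$-exact. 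The main obstacle is precisely this separation argument: one must pick the right target ($\prod_i M_{n_i}(\C)$, not $C^*_r(G|_F)$) and the right invariant (the sequence of ranks, constantly $1$ on $\Pi(p)$ but eventually vanishing on anything pulled back from the ideal); the remaining ingredients are all furnished by results established above.
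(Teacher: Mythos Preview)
Your proof is correct and rests on the same core observation as the paper's: each regular representation $\pi_{x_i}$ sends the Kazhdan projection to a rank-one projection (so $(\pi_{x_i})_*[p]=1$ for every $i$), whereas anything supported in $C_c(G|_U)$ is eventually killed by $\pi_{x_i}$ by hypothesis (iii). The paper implements this more directly: it assumes $[p]=[1_k]-[q]$ with $q=1_k+a$, $a\in M_n(C^*_r(G|_U))$, approximates $a$ by a self-adjoint $b\in M_n(C_c(G|_U))$, and then uses functional calculus to see that for large $i$ the equation $[\pi_{x_i}(p)]=[1_k]-[\chi_{(1/2,\infty)}(1_k+\pi_{x_i}(b))]$ reads $1=0$. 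Your packaging via $\Pi\colon C^*_r(G)\to\prod_i M_{n_i}(\C)$ and the factorization through $\bigoplus_i M_{n_i}(\C)$ is a cleaner, more functorial way to express the same contradiction, trading the explicit functional-calculus step for the standard identification $K_0(\bigoplus_i M_{n_i}(\C))=\bigoplus_i\Z$; it also makes the role of ``eventually zero'' versus ``constantly one'' more transparent. The only place your write-up is slightly delicate is the net-versus-sequence issue you flag: your global factorization $\Pi\circ j=\iota\circ\rho$ really does need that $(\pi_{x_i}(b))_i$ lands in the $c_0$-sum for every $b\in C_c(G|_U)$, which for an arbitrary net is not literally what hypothesis (iii) gives. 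The paper's hands-on argument sidesteps this entirely, since it only needs a single tail of the net determined by one compact set; your suggested fix of choosing the sub-index-set after fixing the hypothetical preimage $z$ achieves the same thing.
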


\begin{proof}
We have seen that $p$ itself goes to zero in $C^*(G|_F)$, so it suffices to show that $[p]$ is not in the image of the map $K_*(C^*_r(G|_U))\to K_*(C^*_r(G))$.  Assume for contradiction that it is, so there exists some projection $q\in M_n(\widetilde{C^*_r(G|_U)})$ and $k\leq n$ such that $[p]=[1_k]-[q]$ in $K_0(C^*_r(G))$ (here $\widetilde{\cdot}$ denotes unitisation, and $1_k$ denotes the idempotent in $M_n(\C)$ with $k$ ones down the main diagonal, followed by $n-k$ zeros), and such that $q=1_k+a$ for some self-adjoint $a\in M_n(C^*_r(G|_U))$.  Let $b\in M_n(C_c(G_U))$ be self-adjoint and such that $\|a-b\|<1/100$.  

For each $i$, let $\pi_{x_i}:C^*_r(G)\to \mathcal{B}(\ell^2(G_x))$ denote the regular representation, where $(x_i)$ is the net in the assumptions.  Then for each $i$, the class $[\pi_{x_i}(p)]\in K_0(\mathcal{B}(\ell^2(G_x)))\cong \Z$ corresponds to the generator $1$, as $\pi_{x_i}(p)$ is a rank one projection by Proposition \ref{kaz im}.  On the other hand, if $\chi_{(1/2,\infty)}$ denotes the characteristic function of this interval, then the fact that $\|q-(1_k+b)\|<1/100$ implies that $\chi_{(1/2,\infty)}$ is continuous on the spectrum of $\pi_{x_i}(1_k+b)$ and moreover that 
\begin{align*}
1 & =[\pi_{x_i}(p)]=[\pi_{x_i}(1_k)]-[\chi_{(1/2,\infty)}(\pi_{x_i}(1_k+b)] \\ & =[1_k]-[\chi_{(1/2,\infty)}(\pi_{x_i}(1_k)+\pi_{x_i}(b))].
\end{align*}
As $b$ is compactly supported, the assumption that the orbits $Gx_i$ eventually do not intersect any compact subset of $U$ implies that $\pi_{x_i}(b)$ is zero for all suitably large $i$.  Thus the above displayed line implies that for all suitably large $i$, $1=0$ in $\Z$, giving the desired contradiction. 
\end{proof}

Combining this with the observation of Higson, Lafforgue, and Skandalis that a groupoid satisfying the Baum-Connes conjecture must be inner $K$-exact (see \cite[Section 1]{Higson:2002la}), we get the following corollary.

\begin{corollary}\label{bc cor}
Under the assumptions of Proposition \ref{t ex}, the Baum-Connes conjecture (with trivial coefficients) must fail for at least one of the groupoids $G|_U$, $G$, or $G|_F$. \qed
\end{corollary}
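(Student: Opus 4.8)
The plan is to play the left- and right-hand sides of the Baum--Connes assembly map against each other, using Proposition \ref{in k ex res} as the source of the contradiction. Recall that the Baum--Connes conjecture with trivial coefficients for a groupoid $H$ asserts that the assembly map $\mu_H\colon K^{\mathrm{top}}_*(H)\to K_*(C^*_r(H))$ is an isomorphism. For the decomposition of $G^{(0)}$ into the open invariant set $U$ and its closed complement $F$, there is a natural sequence
$$
K^{\mathrm{top}}_*(G|_U)\to K^{\mathrm{top}}_*(G)\to K^{\mathrm{top}}_*(G|_F)
$$
which is exact in the middle: this semi-exactness of the topological side under an open/closed decomposition of the unit space is part of the standard machinery, and is precisely the observation of Higson, Lafforgue, and Skandalis referred to in the statement (compare \cite[Section 1]{Higson:2002la}). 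Moreover, the assembly maps for $G|_U$, $G$, and $G|_F$ are natural with respect to the inclusion $C^*_r(G|_U)\hookrightarrow C^*_r(G)$ and the restriction $C^*_r(G)\twoheadrightarrow C^*_r(G|_F)$, so one obtains a commuting ladder between the topological sequence above and the sequence
$$
K_*(C^*_r(G|_U))\to K_*(C^*_r(G))\to K_*(C^*_r(G|_F)).
$$

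First I would assume, for contradiction, that the Baum--Connes conjecture (with trivial coefficients) holds for all three of $G|_U$, $G$, and $G|_F$; then the three vertical assembly maps in the ladder are isomorphisms. A routine diagram chase (exactness of the top row plus the three vertical isomorphisms force exactness of the bottom row) then shows that
$$
K_*(C^*_r(G|_U))\to K_*(C^*_r(G))\to K_*(C^*_r(G|_F))
$$
is exact in the middle, i.e.\ that $G$ is inner $K$-exact in the sense of Definition \ref{in k ex}. But Proposition \ref{in k ex res}, under exactly the standing hypotheses of Proposition \ref{t ex}, produces a class $[p]\in K_0(C^*_r(G))$ that maps to zero in $K_0(C^*_r(G|_F))$ yet does not lie in the image of $K_0(C^*_r(G|_U))\to K_0(C^*_r(G))$; this is precisely a failure of middle exactness of the bottom row. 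The contradiction forces the Baum--Connes conjecture to fail for at least one of $G|_U$, $G$, or $G|_F$.

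The only non-routine ingredient is the semi-exactness of the topological side for an open/closed decomposition of the unit space, together with naturality of assembly along the ideal inclusion and the quotient map; both are standard facts and are exactly the input singled out by Higson--Lafforgue--Skandalis, so in the write-up I would cite \cite[Section 1]{Higson:2002la} rather than reprove them. Everything else is a formal diagram chase once Proposition \ref{in k ex res} is available, which is why the corollary can fairly be called immediate. The one point to be slightly careful about is the exact form of the topological-side sequence one uses (and whether one prefers to phrase it via $K^{\mathrm{top}}$ or via $KK$-theoretic assembly), but any of the standard formulations suffices for the diagram chase.
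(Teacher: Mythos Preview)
Your proposal is correct and follows exactly the approach the paper intends: the paper states the corollary as immediate from Proposition \ref{in k ex res} together with the Higson--Lafforgue--Skandalis observation that the topological side is always middle-exact and assembly is natural, and your write-up simply spells out the resulting diagram chase. There is nothing to add or correct.
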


Another interesting connection to the Baum-Connes conjecture is given by the following result, saying that the class of the Kazhdan  projection cannot be in the image of the maximal Baum-Connes assembly map in some cases.  Variants of this are well-known for groups: see for example \cite[Section 5]{Higson:1998qd}.

\begin{lemma}\label{kaz bc}
Let $G$ be a groupoid such that the Kazhdan projection $p$ exists in $C^*_{\max}(G)$, and such that no source fibre is finite.  Assume moreover that the class $[p]\in K_0(C^*_{\max}(G))$ is non-zero, and that $G$ satisfies the Baum-Connes conjecture.  Then $[p]$ is not in the image of the maximal Baum-Connes assembly map. 
\end{lemma}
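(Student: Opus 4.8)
The plan is to argue by contradiction, using the naturality of the Baum--Connes assembly map under the canonical surjection together with Proposition~\ref{kaz im}. Write $\lambda:C^*_{\max}(G)\to C^*_r(G)$ for the canonical quotient $*$-homomorphism. First I would check that $\lambda(p)$ is the Kazhdan projection in $C^*_r(G)$: any $*$-representation of $C^*_r(G)$ is in particular a representation of $C_c(G)$, hence extends through $\lambda$ to a representation of $C^*_{\max}(G)$, and under this extension $p$ is sent to the orthogonal projection onto the constant vectors by the defining property of $p$; since the image of $\lambda(p)$ under the given representation of $C^*_r(G)$ coincides with the image of $p$ under the extended representation of $C^*_{\max}(G)$, the projection $\lambda(p)$ indeed has the defining property of the Kazhdan projection in $C^*_r(G)$. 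Because no source fibre $G_x$ is finite, Proposition~\ref{kaz im} now forces $\lambda(p)=0$, and in particular $\lambda_*[p]=0$ in $K_0(C^*_r(G))$.

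Next I would invoke the standard compatibility $\mu_r=\lambda_*\circ\mu_{\max}$ between the maximal assembly map $\mu_{\max}:K^{\mathrm{top}}_*(G)\to K_*(C^*_{\max}(G))$ and the reduced assembly map $\mu_r:K^{\mathrm{top}}_*(G)\to K_*(C^*_r(G))$. Suppose for contradiction that $[p]=\mu_{\max}(x)$ for some $x\in K^{\mathrm{top}}_*(G)$. Applying $\lambda_*$ and using the previous paragraph, $\mu_r(x)=\lambda_*\mu_{\max}(x)=\lambda_*[p]=0$. Since $G$ satisfies the Baum--Connes conjecture, $\mu_r$ is an isomorphism, hence injective, so $x=0$; but then $[p]=\mu_{\max}(x)=0$, contradicting the hypothesis that $[p]\neq 0$ in $K_0(C^*_{\max}(G))$. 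Therefore $[p]$ lies outside the image of $\mu_{\max}$, as claimed.

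I do not expect a serious obstacle here: the argument is essentially formal once the right tools are assembled, and is the exact groupoid analogue of the group-theoretic reasoning in \cite[Section 5]{Higson:1998qd}. The two points requiring a little care are (a) the identification of $\lambda(p)$ with the Kazhdan projection in $C^*_r(G)$, which is precisely where the ``no finite source fibre'' hypothesis enters via Proposition~\ref{kaz im} to make that projection vanish; and (b) the interpretation of ``$G$ satisfies Baum--Connes'' as asserting that the reduced assembly map is an isomorphism, together with the naturality of the assembly map with respect to the quotient $C^*_{\max}(G)\to C^*_r(G)$, so that one genuinely obtains injectivity of $\mu_r$. If one wanted to be fully careful one could also spell out that, since $[p]\in K_0$, an element $x$ with $\mu_{\max}(x)=[p]$ may be taken in the even part $K^{\mathrm{top}}_0(G)$, but this changes nothing in the argument.
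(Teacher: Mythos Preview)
Your proof is correct and follows essentially the same route as the paper: both use the commutative triangle $\mu_r=\lambda_*\circ\mu_{\max}$, invoke Proposition~\ref{kaz im} together with the ``no finite source fibre'' hypothesis to get $\lambda_*[p]=0$, and then use that $\mu_r$ is an isomorphism to conclude. The only difference is cosmetic: the paper phrases the argument as a direct diagram chase rather than an explicit contradiction, and leaves the identification of $\lambda(p)$ with the reduced Kazhdan projection implicit, whereas you spell it out.
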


\begin{proof}
We have a commutative diagram 
$$
\xymatrix{ K_*^{top}(G) \ar[r]^-{\mu_m} \ar[dr]_-{\mu_r} & K_*(C^*_{\max}(G)) \ar[d]^-{\lambda_*} \\ & K_*(C^*_r(G)) }
$$
where the maps labeled $\mu_m$ and $\mu_r$ are respectively the maximal and reduced Baum-Connes assembly maps, and the map labeled $\lambda_*$ is the map on $K$-theory induced by the canonical quotient $\lambda:C^*_{\max}(G)\to C^*_r(G)$.  We are assuming that $\mu_r$ is an isomorphism, and Proposition \ref{kaz im} plus the assumption that no source fibre in $G$ is finite implies that the image of $[p]\in K_0(C^*_{\max}(G))$ under $\lambda_*$ is zero.  The result follows as we are assuming that $[p]\in K_0(C^*_{\max}(G))$ is non-zero.
\end{proof}

It would be interesting if one could show that $[p]\in K_0(C^*_{\max}(G))$ cannot be in the image of the maximal assembly map, \emph{without} assuming that $G$ satisfies the Baum-Connes conjecture; this is known for discrete groups \cite[Section 5]{Higson:1998qd}.

It would be also be interesting to have a good characterisation of when $[p]\neq 0$ in $K_*(C^*_{\max}(G))$; this is automatic in the group case, but we do not have a good general condition.  We do at least have the following observation.  It is implicit in the proof of Proposition \ref{in k ex res}, but it seemed potentially useful to make it explicit.

\begin{lemma}\label{kaz ne0}
Say $G$ is a groupoid, and assume the Kazhdan projection is not zero in $C^*_{\red}(G)$.  Then $[p]\neq 0$ in $K_0(C^*_{\red}(G))$.
\end{lemma}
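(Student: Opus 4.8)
The plan is to detect the class $[p]$ in $K$-theory by pushing it forward along a single well-chosen regular representation, exactly along the lines already used in the proof of Proposition~\ref{in k ex res}. First I would apply Proposition~\ref{kaz im}: since $p\neq 0$ in $C^*_r(G)$, there exists $x\in G^{(0)}$ whose source fibre $G_x$ is finite (and non-empty, as $x\in G_x$). Fix such an $x$ and set $n:=|G_x|$.

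Next, recall from Example~\ref{reg reps} that the regular representation $\pi_x$ is one of the representations whose norms define $\|\cdot\|_r$, so $\pi_x$ descends to a $*$-homomorphism $\pi_x\colon C^*_r(G)\to\mathcal{B}(\ell^2(G_x))$. Since $G_x$ is finite, $\ell^2(G_x)$ is $n$-dimensional and $\mathcal{B}(\ell^2(G_x))\cong M_n(\C)$, so $\pi_x$ induces a group homomorphism $(\pi_x)_*\colon K_0(C^*_r(G))\to K_0(M_n(\C))\cong\Z$.

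Then I would compute $(\pi_x)_*[p]$. By definition of the Kazhdan projection, $\pi_x(p)$ is the orthogonal projection of $\ell^2(G_x)$ onto its constant vectors; by Lemma~\ref{pix con} these are precisely the functions on $G_x$ that are constant in the usual sense, which form a one-dimensional subspace (again using that $G_x$ is finite and non-empty). Hence $\pi_x(p)$ is a rank-one projection, so $(\pi_x)_*[p]=[\pi_x(p)]$ is the generator $1$ of $K_0(M_n(\C))\cong\Z$, in particular non-zero. Since $(\pi_x)_*$ is a well-defined homomorphism, $[p]$ cannot vanish in $K_0(C^*_r(G))$, which is the claim.

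I do not expect a genuine obstacle here: the whole content is that finiteness of some source fibre — handed to us by Proposition~\ref{kaz im} as soon as $p\neq 0$ — forces the associated regular representation to take values in a finite-dimensional matrix algebra, where $K_0$ faithfully records the rank of a projection. The only points needing (routine) verification are that $\pi_x$ factors through $C^*_r(G)$ and that the space of constant vectors in $\ell^2(G_x)$ is one-dimensional, both of which are immediate from the material cited above.
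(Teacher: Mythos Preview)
Your argument is correct and matches the paper's proof essentially verbatim: both use Proposition~\ref{kaz im} to find $x$ with $G_x$ finite, then push $[p]$ forward along $(\pi_x)_*$ into $K_0(\mathcal{B}(\ell^2(G_x)))\cong\Z$, where Lemma~\ref{pix con} shows $\pi_x(p)$ is a non-trivial (indeed rank-one) projection. The only cosmetic difference is that the paper phrases the last step as ``non-trivial projections in a finite-dimensional $C^*$-algebra have non-zero $K_0$ class'' rather than explicitly computing the rank.
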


\begin{proof}
Proposition \ref{kaz im} implies that there is some $x\in G^{(0)}$ with $G_x$ finite, and Lemma \ref{pix con} implies that $\pi_x(p)\neq 0$.  As $\mathcal{B}(\ell^2(G_x))$ is finite dimensional, all non-trivial projections in this algebra have non-zero $K_0$ class.  Hence the map $(\pi_x)_*:K_0(C^*_{\red}(G))\to K_0(\mathcal{B}(\ell^2(G_x))$ sends $[p]\in K_0(C^*_{\red}(G))$ to something non-zero, and so $[p]$ itself is non-zero.
\end{proof}

\section{Questions}\label{q sec}

We conclude the paper by summarising some open problems that we think are interesting.  Some of these we thought about and could not make progress with; others we did not attempt to address here mainly to keep the paper to a reasonable length (and would be more than happy for someone else to take up).

\begin{enumerate}[(i)]
\item Does property (T) for a groupoid $G$ imply some sort of fixed point property for affine actions on bundles of Hilbert spaces over $G^{(0)}$, analogous to the classical Delorme-Guichardet theorem for groups (see \cite[Chapter 2]{Bekka:2000kx})?
\item What (if any) is the precise relationship between our property (T), and the Dong-Ruan property (T) from Definition \ref{t2 def} above?
\item (Suggested by Jesse Peterson) Is there any connection between our property (T) and Bekka's definition \cite{Bekka:2006iz} of property (T) for (pairs of) $C^*$-algebras?
\item Is property (T) Morita invariant?
\end{enumerate}

\bibliography{Generalbib}

\end{document}